\theoremstyle{plain}
\newtheorem{theorem}{Theorem}[section]
\newtheorem{proposition}[theorem]{Proposition}
\newtheorem{lemma}[theorem]{Lemma}
\newtheorem{corollary}[theorem]{Corollary}
\theoremstyle{definition}
\newtheorem{definition}[theorem]{Definition}
\newtheorem{notation}[theorem]{Notation}
\newtheorem{construction}[theorem]{Construction}
\theoremstyle{remark}
\newtheorem{remark}[theorem]{Remark}
\newtheorem{example}[theorem]{Example}
\newcommand{\msf}[1]{\mathsf{#1}}
\newcommand{\mcal}[1]{\mathcal{#1}}
\newcommand{\mbb}[1]{\mathbb{#1}}
\newcommand{\mrm}[1]{\mathrm{#1}}
\newcommand{\Boxbar}{\overline{\Box}}
\DeclareMathOperator{\Hom}{Hom}
\DeclareMathOperator{\Gr}{Gr}
\DeclareMathOperator{\CH}{CH}
\DeclareMathOperator{\Pic}{Pic}
\DeclareMathOperator{\Spec}{Spec}
\DeclareMathOperator{\Proj}{Proj}
\DeclareMathOperator{\coker}{coker}
\DeclareMathOperator{\codim}{codim}
\DeclareMathOperator{\image}{image}
\DeclareMathOperator{\Fil}{Fil}
\DeclareMathOperator{\GL}{GL}
\DeclareMathOperator{\SL}{SL}
\DeclareMathOperator*{\colim}{colim}
\title{Isomorphisms up to bounded torsion between relative $K_0$-groups and Chow groups with modulus}
\author{Ryomei Iwasa}
\address{Department of Mathematical Sciences, University of Copenhagen, Universitetsparken 5, DK-2100 Copenhagen \O.}
\email{ryomei@math.ku.dk}
\author{Wataru Kai}
\address{Mathematical Institute, Tohoku University.
Aza-Aoba 6-3, Sendai 980-8578, Japan.}
\email{kaiw@tohoku.ac.jp}
\begin{document}

\maketitle

\tableofcontents

\section{Introduction}

The purpose of this note is to establish isomorphisms up to bounded torsion between relative $K_0$-groups and Chow groups with modulus as defined in \cite{BS17}.

\begin{theorem}\label{mainthm}
Let $X$ be a separated regular noetherian scheme of dimension $d$ and $D$ an effective Cartier divisor on $X$.
Assume that $D$ has an affine open neighborhood in $X$.
Then there exists a finite descending filtration $F^*$ on $K_0(X,D)$ and, for each integer $p$, there exists a surjective group morphism
\[
	\msf{cyc}\colon \CH^p(X|D) \twoheadrightarrow F^pK_0(X,D)/F^{p+1}K_0(X,D)
\]
such that its kernel is $(p-1)!^N$-torsion for some positive integer $N$ depending only on $p$.
Furthermore, the filtration $F^*$ coincides with the gamma filtration on $K_0(X,D)$ up to $(d-1)!^M$-torsion for some positive integer $M$ depending only on $d$.
\end{theorem}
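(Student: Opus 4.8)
The plan is to adapt the Grothendieck--Riemann--Roch machinery of SGA~6, together with Soul\'e's comparison of filtrations, to the relative modulus setting. I would take $F^*$ to be the filtration by codimension of support on $K_0(X,D)$, so that $F^pK_0(X,D)$ is generated by classes of relative perfect complexes whose supports meet $X\setminus D$ in codimension $\geq p$; since $X$ is regular one has $K_0\cong G_0$, and finiteness of $F^*$ is immediate from $\dim X=d$. The map $\msf{cyc}$ is then defined on generators by sending an admissible cycle $\sum_i n_i[Z_i]$ of codimension $p$, satisfying the modulus condition of \cite{BS17}, to the class $\sum_i n_i[\mathcal{O}_{Z_i}]$ in $F^pK_0(X,D)/F^{p+1}K_0(X,D)$.

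The first substantial step is to prove that $\msf{cyc}$ is well defined, that is, that rational equivalence with modulus is sent to zero. I would construct a relative coniveau (Gersten-type) spectral sequence converging to $K_*(X,D)$, whose relevant $E_1$-row assembles the relative $K$-groups of the points of $X\setminus D$ organized by codimension. In this picture the subgroup of codimension-$p$ cycles that are rationally equivalent to zero with modulus is the image of an $E_1$-differential; the crucial point is that the modulus condition on a rational function $f$ along $D$ is exactly what promotes the symbol $[f]$ to a class in the \emph{relative} $K_1$-group, so that the boundary map realizes modulus rational equivalence inside the relative theory. The hypothesis that $D$ admits an affine open neighborhood enters here, providing the relative units needed near $D$ and making the modulus condition compatible with the differentials of the spectral sequence.

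For surjectivity and the kernel bound I would import the numerical core of Riemann--Roch. Surjectivity is clear, since $F^pK_0(X,D)/F^{p+1}K_0(X,D)$ is generated by classes $[\mathcal{O}_Z]$ with $Z$ of codimension $p$, which one moves into admissible position using the affine neighborhood. To control the kernel I would construct a relative Chern class $c_p\colon F^pK_0(X,D)/F^{p+1}K_0(X,D)\to\CH^p(X|D)$ landing in the modulus Chow group, built from the gamma operations, and establish the relative analogue of the identity $c_p\circ\msf{cyc}=\pm(p-1)!\cdot\mathrm{id}$. Since the denominators occurring in the Newton-polynomial expressions for $c_p$ all divide $(p-1)!$, the cycle map becomes an isomorphism after inverting $(p-1)!$, and tracking the iterated use of these expressions in the relative setting bounds $\ker(\msf{cyc})$ by $(p-1)!^N$-torsion for some $N=N(p)$.

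Finally, to identify $F^*$ with the gamma filtration up to $(d-1)!^M$-torsion, I would transport Soul\'e's comparison theorem to the relative setting: the inclusion $F^p_\gamma\subseteq F^p$ holds integrally, by multiplicativity of the two filtrations together with the compatibility of the gamma operations with the codimension filtration, while the reverse inclusion up to $(d-1)!^M$-torsion follows from the fact that the gamma operations act on the associated graded of $F^*$ through Chern classes whose denominators divide factorials bounded in terms of $d$. I expect the main obstacle to be the well-definedness of $\msf{cyc}$ together with the construction of $c_p$: one must set up a precise dictionary between the modulus condition on cycles and rational equivalence on the one side, and the relativity of the $K$-theory on the other, and verify that the relative Chern classes genuinely take values in $\CH^p(X|D)$ rather than in the ordinary Chow group of $X$. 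Once this relative formalism is in place, the torsion estimates follow the classical pattern.
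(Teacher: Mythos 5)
Your overall architecture (coniveau-type filtration, cycle map $[Z]\mapsto[\mathcal{O}_Z]$, Adams/Chern denominators controlling torsion) matches the paper's in spirit, but the two steps you yourself flag as the main obstacles are exactly where the proposal has genuine gaps, and the routes you sketch for them would not go through. First, the well-definedness of $\msf{cyc}$: you propose a relative Gersten-type spectral sequence for $K_*(X,D)$ in which modulus rational equivalence appears as the image of an $E_1$-differential, on the grounds that ``the modulus condition on a rational function $f$ along $D$ is exactly what promotes the symbol $[f]$ to a class in the relative $K_1$-group.'' No such spectral sequence is available, and that claim is the entire difficulty, not a lemma: rational equivalence with modulus is defined via cycles on $X\times\Box^1$ whose closures in $X\times\Boxbar^1$ satisfy an integrality inequality of Cartier divisors, and translating that condition into relative $K$-theory requires real work. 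The paper does this not with a Gersten complex but by establishing a presentation of $K_0(X,D)$ as the cokernel of $\iota_0^*-\iota_1^*$ on $\colim_Y K_0^Y(X\times\Box^1)$ (Theorem \ref{thm:K-theory}), parallel to the analogous presentation of $\CH^p(X|D)$ (Proposition \ref{prop:cycle}); the key input is the rigidity Lemma \ref{lem:rigidity}, whose proof uses the modulus condition in the form of an explicit integral dependence relation for $1/tf$ to show, via an adic filtration argument, that multiplication by $\theta=(t-1)/t$ acts as the identity on $Li^*Rq_*\Fil^l\mcal{F}$ for $l\gg0$. Your proposal contains no substitute for this step.

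Second, the kernel bound: you propose a relative Chern class $c_p\colon F^pK_0(X,D)/F^{p+1}\to\CH^p(X|D)$ with $c_p\circ\msf{cyc}=\pm(p-1)!$. Constructing Chern classes with values in Chow groups \emph{with modulus} is not known in this generality and is essentially as hard as the theorem itself --- one would have to show that the gamma-operation expressions respect the modulus condition on cycles, which is precisely what you cannot assume. The paper avoids this entirely: it applies the classical Gillet--Soul\'e comparison (Theorem \ref{thm:GS}) only to the groups $K_0^Y(X)$ with supports $Y$ \emph{away from} $D$, where no modulus enters, and then transports the torsion bounds to $K_0(X,D)$ through the two matching coequalizer presentations, using the Adams eigenspace decomposition (Proposition \ref{prop:adams} and Lemma \ref{lem:refine}) to lift classes compatibly. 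You should restructure your argument around these two presentations rather than around a hypothetical relative Gersten sequence and relative Chern classes; once that is done, the final comparison of $F^*$ with the gamma filtration does follow the classical pattern you describe, via Proposition \ref{prop:adams} together with the codimension-one computation of Theorem \ref{thm:cod1}.
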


The case $D=\varnothing$ is a classical theorem of Soul\'e \cite{So85}, which owes its origin to Grothendieck's Riemann-Roch type formula \cite{SGA6}.
The filtration $F^*$ and the morphism $\msf{cyc}$ have been constructed in \cite{Iw19} in a slightly weaker generality.
The assumption that $D$ has an affine open neighborhood is essential, see Example \ref{counterexample}.

Let $\mcal{S}(X|D)$ be the set of all closed subsets in $X$ not meeting $D$ and $\mcal{S}(X|D,1)$ the set of all closed subsets in $X\times\Box^1$ satisfying the modulus condition along $D$.
It follows easily from the definition of $\CH^*(X|D)$ that there is an exact sequence
\[
\xymatrix@1{
	\displaystyle\colim_{Y\in\mcal{S}(X|D,1)}\mcal{Z}^*_Y(X\times\Box^1) \ar[r] & \displaystyle\colim_{Y\in\mcal{S}(X|D)}\CH^*_Y(X) \ar[r] & \CH^*(X|D) \ar[r] & 0,
}
\]
where $\mcal{Z}^*_Y(-)$ is the group of cycles with supports in $Y$ and $\CH^*_Y(-)$ is the Chow group with supports in $Y$.
The real content of this note is to establish an analogous exact sequence for $K$-groups.
In the second section, as Theorem \ref{thm:K-theory}, we establish an exact sequence
\[
\xymatrix@1{
	\displaystyle\colim_{Y\in\mcal{S}(X|D,1)}K_0^Y(X\times\Box^1) \ar[r] & \displaystyle\colim_{Y\in\mcal{S}(X|D)}K_0^Y(X) \ar[r] & K_0(X,D) \ar[r] & 0.
}
\]
Then, from the classical rational isomorphisms between $K_0$-groups and Chow groups, we get a rational isomorphism between $K_0(X,D)$ and $\CH^*(X|D)$.
The estimate on torsion is obtained by using Adams decomposition.

\subsection*{Convention}
All rings are noetherian and all schemes are separated noetherian.
For a point $v$ of a scheme $X$, we denote by $\kappa(v)$ the residue field of $v$.

\section{A presentation of Chow group with modulus}\label{cycle}

\subsection*{Chow groups with supports}


\begin{notation}
Let $X$ be a scheme and $p$ an integer.
\begin{enumerate}[(1)]
\item We write $X^{(p)}$ for the set of all points of codimension $p$ in $X$, i.e., points $v\in X$ whose closures in $X$ have codimension $p$.
We understand $X^{(p)}=\varnothing$ if $p<0$.
\item For a closed subset $Y$ of $X$, we define $\mcal{Z}^p_Y(X)$ to be the free abelian group with the generators $[V]$, one for each $v\in X^{(p)}\cap Y$, with $V$ being the closure of $v$ in $X$.
We write $\mcal{Z}^p(X)=\mcal{Z}^p_X(X)$.
\item For a closed subscheme $D$ of pure codimension $p$ in $X$, we write
\[
	[D] := \sum_{x_i\in D^{(0)}}\mrm{length}(\mcal{O}_{D,x_i})[D_i] \in \mcal{Z}^p_D(X),
\]
where $D_i$ is the closure of $x_i$ in $X$.
\end{enumerate}
\end{notation}

\begin{construction}
Let $X$ be a scheme and $p$ an integer.
Let $w\in X^{(p-1)}$ and write $W$ for its closure in $X$.
For each $v\in X^{(p)}\cap W$, there exists a unique group morphism $\nu_v\colon \kappa(w)^\times \to \mbb{Z}$ which sends $a\in\mcal{O}_{W,v}\setminus\{0\}$ to the length of $\mcal{O}_{W,v}/(a)$.
For $f\in \kappa(w)^\times$, we define
\[
	\mrm{div}(f) := \sum_{v\in X^{(p)}\cap W}\nu_v(f)[V] \in \mcal{Z}^p_W(X).
\]
\end{construction}

\begin{definition}\label{def:Chow_sup}
Let $X$ be a scheme and $p$ an integer.
For a closed subset $Y$ of $X$, we define
\[
	\CH^p_Y(X) := \coker\Bigl(\bigoplus_{w\in X^{(p-1)}\cap Y}\kappa(w)^\times \xrightarrow{\mrm{div}} \mcal{Z}_Y^p(X)\Bigr).
\]
We write $\CH^p(X)=\CH^p_X(X)$.
\end{definition}

\begin{definition}\label{def:unicodim}
Let $X$ be a topological space with irreducible components $\{X_i\}_{i\in I}$.
We say that $X$ is \textit{unicodimensional} if $\codim_X(V)=\codim_{X_i}(V)$ for any $i\in I$ and any irreducible closed subset $V$ of $X_i$.
A scheme is unicodimensional if the underlying topological space is unicodimensional.
\end{definition}


\begin{lemma}\label{lem:pushfoward}
Let $X$ be a unicodimensional catenary scheme, $D$ a closed subscheme of pure codimension $r$ in $X$ and $p$ an integer.
Then $D$ is unicodimensional and $D^{(p-r)}\subset X^{(p)}$.
Furthermore, the inclusion $\iota\colon D\hookrightarrow X$ induces a group morphism
\[
	\iota_*\colon \CH^{p-r}_{D\cap Y}(D) \to \CH^p_Y(X)
\]
for any closed subset $Y$ of $X$.
\end{lemma}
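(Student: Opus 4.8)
The plan is to reduce all three assertions to a single codimension identity: for every irreducible closed subset $V$ of $D$,
\[
	\codim_D(V) = \codim_X(V) - r .
\]
Once this is established, the statements follow with little more than bookkeeping. To prove the identity, I would fix $V$ and choose an irreducible component $D_j$ of $D$ with $V \subseteq D_j$; then $D_j$ is an irreducible closed subset of some irreducible component $X_i$ of $X$, and $\codim_X(D_j)=r$ by the purity hypothesis. Since $X$ is catenary, codimension is additive along the chain $V\subseteq D_j\subseteq X_i$, giving $\codim_{X_i}(V)=\codim_{D_j}(V)+\codim_{X_i}(D_j)$. Using unicodimensionality of $X$ to replace $\codim_{X_i}$ by $\codim_X$ throughout yields $\codim_{D_j}(V)=\codim_X(V)-r$, a value independent of the component $D_j$ containing $V$.

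For the first assertion I would note that every maximal chain of irreducible closed subsets of $D$ starting at $V$ lies in a single component: its top term is irreducible, hence contained in some $D_j$, which then contains the whole chain. Therefore $\codim_D(V)=\max_{D_j\supseteq V}\codim_{D_j}(V)$, and by the previous paragraph each term of this maximum equals $\codim_X(V)-r$. Thus $\codim_D(V)=\codim_{D_j}(V)$ for every component $D_j\supseteq V$, which is precisely unicodimensionality of $D$ and simultaneously establishes the displayed identity. The second assertion is then immediate: if $v\in D^{(p-r)}$ has closure $V$, the identity gives $\codim_X(V)=(p-r)+r=p$, and since $D$ is closed the closure of $v$ in $X$ is again $V$, so $v\in X^{(p)}$.

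For the third assertion I would first define the map on cycles. The generators of $\mcal{Z}^{p-r}_{D\cap Y}(D)$ are the classes $[V]$ with $v\in D^{(p-r)}\cap Y$; by the second assertion each such $v$ lies in $X^{(p)}\cap Y$ with unchanged closure, so the assignment $[V]\mapsto[V]$ defines a homomorphism $\iota_*\colon\mcal{Z}^{p-r}_{D\cap Y}(D)\to\mcal{Z}^p_Y(X)$ (the multiplicity is one, since $\iota$ restricts to the identity on $V$). The crux is to verify that $\iota_*$ preserves rational equivalence. Fixing $w\in D^{(p-r-1)}\cap Y$ with closure $W$ and $f\in\kappa(w)^\times$, I would show $\iota_*(\mrm{div}_D(f))=\mrm{div}_X(f)$: applying the second assertion with $p$ replaced by $p-1$ shows $w\in X^{(p-1)}\cap Y$, so the right-hand side is defined; the codimension-one points of $W$ computed in $D$ and in $X$ coincide, i.e.\ $D^{(p-r)}\cap W=X^{(p)}\cap W$; and the local rings $\mcal{O}_{W,v}$ defining $\nu_v$ depend only on the integral subscheme $W$, not on the ambient scheme. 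Hence the two divisors have identical coefficients and agree under $\iota_*[V]=[V]$, so $\iota_*$ sends the image of $\bigoplus_w\kappa(w)^\times$ in $D$ into that in $X$ and descends to the desired map $\CH^{p-r}_{D\cap Y}(D)\to\CH^p_Y(X)$.

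The main obstacle is the codimension identity of the first paragraph, which is the only place the hypotheses are genuinely used: one must combine catenary additivity with unicodimensionality correctly and confirm that the relevant chain lengths are finite (this is guaranteed by catenary together with $\codim_X(D_j)=r$). Everything afterward is formal, resting on the observation that the order functions $\nu_v$ are intrinsic to the integral subscheme $W$.
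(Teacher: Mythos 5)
Your proof is correct and follows essentially the same route as the paper: catenary additivity along $V\subseteq D_j\subseteq X_i$ combined with unicodimensionality of $X$ and purity of $D$ yields the identity $\codim_D(V)=\codim_X(V)-r$, from which unicodimensionality of $D$ and $D^{(p-r)}\subset X^{(p)}$ follow. Your verification that $\iota_*$ respects rational equivalence (via $D^{(p-r)}\cap W=X^{(p)}\cap W$ and the intrinsic nature of the order functions $\nu_v$) just spells out what the paper leaves as ``immediate.''
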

\begin{proof}
Let $\{X_i\}_{i\in I}$ (resp.\ $\{D_j\}_{j\in J}$) be the set of irreducible components of $X$ (resp.\ $D$).
Take $j\in J$ and $v\in D^{(p-r)}\cap D_j$.
Then 
\[
	\codim_{D_j}(v) + \codim_{X_i}(D_j) = \codim_{X_i}(v) = \codim_X(v)
\]
for any $i\in I$ with $X_i\supset D_j$.
Since $\codim_{X_i}(D_j)=r$ regardless of the choices of $i,j$, we see that $D$ is unicodimensional and that $\codim_X(v)=\codim_D(v)+r=p$.
Hence, $D^{(p-r)}\subset X^{(p)}$.
The last statement is immediate from this.
\end{proof}

\begin{lemma}\label{lem:pullback}
Let $X$ be a unicodimensional catenary scheme, $Y$ a closed subset of $X$, $D$ an effective Cartier divisor on $X$ and $p$ an integer.
Let $v\in X^{(p)}\cap Y$ whose closure $V$ in $X$ is not contained in $D$.
Then $[V\times_XD]\in \mcal{Z}^p_{D\cap Y}(D)$.
\end{lemma}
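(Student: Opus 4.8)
The statement asserts that the cycle $[V\times_X D]$, which a priori is defined only when $V\times_X D$ is a closed subscheme of $D$ of pure codimension $p$, is indeed such a cycle and is moreover supported on $D\cap Y$. The support claim is immediate: since $Y$ is closed and $v\in Y$, we have $V=\overline{\{v\}}\subseteq Y$, so every irreducible component of $V\times_X D$ lies in $V\cap D\subseteq D\cap Y$. Thus the whole plan reduces to showing that every generic point $v'$ of $V\times_X D$, with closure $V'$ in $D$, satisfies $\codim_D(V')=p$; granting this, the expression $[V\times_X D]=\sum_{v'}\mrm{length}(\mcal{O}_{V\times_X D,\,v'})[V']$ is a well-defined element of $\mcal{Z}^p_{D\cap Y}(D)$.

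First I would determine the codimension of $V\times_X D$ inside $V$. Since $V$ is integral and $V\not\subseteq D$, the local non-zero-divisor defining the Cartier divisor $D$ restricts to a nonzero---hence non-zero-divisor---element $g$ of the function field of $V$, so that $V\times_X D$ is an effective Cartier divisor on $V$, cut out locally by $g$. By Krull's principal ideal theorem each minimal prime over $(g)$ has height at most $1$, and the non-zero-divisor property forces the height to be exactly $1$; hence $\codim_V(V')=1$ for every generic point $v'$ of $V\times_X D$. The same argument applied to the local equations of $D$ in $X$ shows that $D$ has pure codimension $1$ in $X$, so Lemma \ref{lem:pushfoward} (with $r=1$) applies and $D$ is unicodimensional.

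It then remains to convert codimensions, which I would carry out exactly as in the proof of Lemma \ref{lem:pushfoward}. Choosing an irreducible component $X_i\supseteq V$ and using that $X_i$ is catenary together with unicodimensionality of $X$, additivity of codimension gives $\codim_X(V')=\codim_V(V')+\codim_X(V)=1+p$. Next, choosing an irreducible component $D_j\subseteq D$ with $V'\subseteq D_j$ and a component $X_{i'}\supseteq D_j$, the same additivity yields $\codim_X(V')=\codim_{D_j}(V')+\codim_X(D_j)$; since $\codim_X(D_j)=1$ we deduce $\codim_{D_j}(V')=p$, whence $\codim_D(V')=p$ by the unicodimensionality of $D$ just established.

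The only genuine bookkeeping, and the step most prone to slips, is the repeated application of additivity of codimension across the two chains $V'\subseteq V\subseteq X$ and $V'\subseteq D_j\subseteq X$, where one must consistently pass between a fixed irreducible component and the whole scheme via unicodimensionality. This is, however, precisely the catenary-plus-unicodimensional computation already performed in Lemma \ref{lem:pushfoward}, so I expect no new difficulty to arise beyond the single new input from Krull's theorem that $V\times_X D$ has pure codimension $1$ in $V$.
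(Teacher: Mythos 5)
Your proof is correct and follows essentially the same route as the paper: observe that $V\times_XD$ is an effective Cartier divisor on $V$ hence of pure codimension $1$ in $V$, deduce $\codim_X(V\times_XD)=p+1$, and then use the catenary and unicodimensionality hypotheses to conclude $\codim_D(V\times_XD)=p$. The paper's proof is just a terser version of yours, leaving the Krull-theorem justification and the component-by-component bookkeeping implicit.
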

\begin{proof}
It suffices to show that $\codim_D(V\times_XD)=p$.
First of all, note that $V\times_XD$ is an effective Cartier divisor on $V$, and thus it is of pure codimension $1$ in $V$.
It follows that $\codim_X(V\times_XD)=p+1$.
Since $X$ is catenary, we conclude that $\codim_D(V\times_XD)=\codim_X(V\times_XD)-\codim_X(D)=p$.
\end{proof}

\begin{construction}\label{constr:pullback}
Let $X$ be a unicodimensional catenary scheme, $Y$ a closed subset of $X$, $D$ an effective Cartier divisor on $X$ and $p$ an integer.
We define a group morphism
\[
	\iota^*\colon \mcal{Z}^p_Y(X) \to \CH^p_{Y\cap D}(D)
\]
as follows, where $\iota$ refers to the inclusion $D\hookrightarrow X$.
For an integral closed subscheme $V$ of codimension $p$ in $X$ whose support is in $Y$,
\[
\iota^*([V]) := 
\begin{cases}
	[V\times_XD] & \text{if }V\nsubseteq D \\
	j_*[\mcal{O}_X(D)\vert_V] & \text{if }V\subseteq D
\end{cases}\in\CH_{Y\cap D}^p(D),
\]
where the first equation is well-defined by Lemma \ref{lem:pullback} and, for the second, $j$ refers to the inclusion $V\hookrightarrow D$ and $j_*\colon \CH^1(V) \to \CH_{Y\cap D}^p(D)$ is the push-forward ensured by Lemma \ref{lem:pushfoward}.
\end{construction}

\begin{remark}
It is the classical fact that the morphism $\iota^*$ in Construction \ref{constr:pullback} factors through $\CH^p_Y(X)$ if $X$ is an algebraic scheme, cf., \cite[Chapter 2]{Fu98}.
It would be true more generally, but we do not need such a result for our purpose.
\end{remark}

\subsection*{Chow groups with modulus}

\begin{notation}
We set $\Boxbar^1 := \Proj(\mbb{Z}[T_0,T_1])$ and let $t$ be the rational coordinate $T_0/T_1$.
We write $\Box^1:=\Boxbar^1\setminus(t=\{\infty\})$.
For an integer $q$ and a scheme $X$, we denote by $\iota_{X,q}$ (or simply by $\iota_q$) the inclusion $X \hookrightarrow X\times\Box^1$ defined by $t=q$.
\end{notation}

\begin{definition}[Binda-Saito]\label{def:modulus_cond}
Let $X$ be a scheme and $D$ an effective Cartier divisor on $X$.
Let $W$ be a closed subset of $X\times\Box^1$.
Let $\overline{W}^N$ be the normalization of the closure $\overline{W}$ (with the reduced scheme-structure) of $W$ in $X\times\Boxbar^1$ and denote by $\phi_W$ the canonical morphism $\overline{W}^N\to\overline{W}$.
We say that \textit{$W$ satisfies the modulus condition along $D$} if the following inequality of Cartier divisors on $\overline{W}^N$ holds
\[
	\phi_W^*(D\times\Boxbar^1)\le \phi_W^*(X\times\{\infty\}).
\]
\end{definition}

\begin{lemma}\label{lem:containment}
Let $X$ be a scheme and $D$ an effective Cartier divisor on $X$.
Let $W$ be a closed subset of $X\times\Box^1$ satisfying the modulus condition along $D$.
Then any closed subset of $W$ satisfies the modulus condition along $D$.
\end{lemma}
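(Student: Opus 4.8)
The plan is to reduce to the integral case and then to replace the missing functoriality of normalization along a closed immersion by a correspondence built out of a fibre product. First I would reduce to the situation where both $W$ and the chosen closed subset $W'\subseteq W$ are integral. Writing $\{W_i\}$ for the irreducible components of $W$, the closure $\overline{W}$ has irreducible components $\overline{W_i}$, and its normalization decomposes as $\overline{W}^N=\coprod_i\overline{W_i}^N$ with $\phi_W$ restricting to $\phi_{W_i}$ on each summand; hence the inequality of Definition \ref{def:modulus_cond} holds on $\overline{W}^N$ if and only if it holds on each $\overline{W_i}^N$, i.e.\ $W$ satisfies the modulus condition if and only if every $W_i$ does. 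Each irreducible component of $W'$ is contained in some $W_i$, since an irreducible closed subset of a finite union lies in one of the members, so it suffices to treat integral $W'\subseteq W$ with $W$ integral. We may further assume that $\overline{W'}$ is contained neither in $D\times\Boxbar^1$ nor in $X\times\{\infty\}$: otherwise $W'$ is empty or contained in $D$ and there is nothing to prove. Then both $\phi_{W'}^*(D\times\Boxbar^1)$ and $\phi_{W'}^*(X\times\{\infty\})$ are well-defined effective Cartier divisors on $\overline{W'}^N$.

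The essential obstacle is that the closed immersion $\overline{W'}\hookrightarrow\overline{W}$ does not lift to a morphism $\overline{W'}^N\to\overline{W}^N$, so the known inequality on $\overline{W}^N$ cannot be pulled back directly. To get around this I would introduce a correspondence. Let $\nu\colon\overline{W}^N\to\overline{W}$ be the normalization and choose an irreducible component $Z$ of $(\nu^{-1}\overline{W'})_{\mathrm{red}}$ dominating $\overline{W'}$; then $Z\to\overline{W'}$ is finite surjective, and $Z$ sits inside $\overline{W}^N$ as a closed subset. Let $Z^N$ be the normalization of $Z$. Since $Z^N$ is normal and integral and maps finitely and dominantly to $\overline{W'}$, the universal property of normalization yields a finite surjective morphism $a\colon Z^N\to\overline{W'}^N$, while the inclusion $Z\hookrightarrow\overline{W}^N$ gives $h\colon Z^N\to\overline{W}^N$. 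By construction the two composites $Z^N\to X\times\Boxbar^1$ through $\overline{W'}^N$ and through $\overline{W}^N$ agree, so pulling the inequality $\phi_W^*(D\times\Boxbar^1)\le\phi_W^*(X\times\{\infty\})$ back along $h$ gives
\[
	a^*\phi_{W'}^*(D\times\Boxbar^1)=h^*\phi_W^*(D\times\Boxbar^1)\le h^*\phi_W^*(X\times\{\infty\})=a^*\phi_{W'}^*(X\times\{\infty\})
\]
as effective Cartier divisors on $Z^N$.

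It then remains to descend this inequality along $a$. Because $a\colon Z^N\to\overline{W'}^N$ is a finite surjective morphism of normal integral schemes, every codimension-one point $y'$ of $\overline{W'}^N$ has a codimension-one point $z$ of $Z^N$ above it, and for any effective Cartier divisor $E$ on $\overline{W'}^N$ one has $\mathrm{ord}_z(a^*E)=e_{z/y'}\,\mathrm{ord}_{y'}(E)$ with ramification index $e_{z/y'}\ge 1$. Hence $a^*E\le a^*F$ forces $\mathrm{ord}_{y'}(E)\le\mathrm{ord}_{y'}(F)$ at every codimension-one point, and since effective Cartier divisors on a normal integral scheme are determined by their orders there, $E\le F$. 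Applied to the displayed inequality this gives $\phi_{W'}^*(D\times\Boxbar^1)\le\phi_{W'}^*(X\times\{\infty\})$, i.e.\ $W'$ satisfies the modulus condition. I expect the main obstacle to be exactly the construction in the middle paragraph, namely manufacturing the scheme $Z^N$ that maps finitely onto both normalizations and carries the two divisors compatibly, together with the verification that finite surjective pullback reflects inequalities of Cartier divisors; the initial reduction and the final descent are then routine.
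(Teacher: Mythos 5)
Your argument is correct, and it is in substance the proof of \cite[Proposition 2.4]{KP} that the paper invokes as its first justification: reduce to integral $W'\subseteq W$, pick a component $Z$ of the preimage of $\overline{W'}$ in $\overline{W}^N$ dominating $\overline{W'}$, normalize it to obtain a scheme mapping onto $\overline{W'}^N$ and into $\overline{W}^N$ compatibly over $X\times\Boxbar^1$, pull the inequality back, and descend it along the resulting surjection by comparing orders at codimension-one points of normal schemes. One imprecision: in the stated generality the normalization $\overline{W}^N\to\overline{W}$ is a priori only integral, not finite, so $Z\to\overline{W'}$ and $a\colon Z^N\to\overline{W'}^N$ should be called integral surjective rather than finite surjective. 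This costs nothing where you use it --- lying over, incomparability and closedness all hold for integral morphisms, and this is precisely the point the paper flags when it writes ``noting that every integral morphism of schemes is closed'' --- but it is worth stating correctly, since the descent step (orders at codimension-one points detecting effectivity) is the one place where one is implicitly using that the normalizations are reasonable (e.g.\ that $X$ is excellent, as is tacitly assumed throughout this literature).

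The argument the paper actually writes out is genuinely different and more elementary, and you may find it useful: after localizing so that $X=\Spec(A)$ and $D=(f)$, restrict $\overline{W}$ to the chart $\Spec(A[1/t])=X\times(\Boxbar^1\setminus\{0\})$, with coordinate ring $A[1/t]/J$. The modulus condition for $W$ is equivalent to $1/tf$ being integral over this ring, i.e.\ to the existence of a relation $\tfrac{1}{t^n}+g_1\tfrac{1}{t^{n-1}}+\dotsb+g_n=0$ with $g_i\in f^iA[1/t]/J$. Such a relation is a statement inside the coordinate ring itself, so it maps to the coordinate ring of $\overline{Y}\cap\Spec(A[1/t])$ for any closed $Y\subseteq W$, which is the modulus condition for $Y$. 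This bypasses normalizations of the subvariety and any descent of Cartier-divisor inequalities entirely; what it buys in addition is the explicit integral dependence relation, which the paper reuses in the proof of Lemma \ref{lem:rigidity}. Your correspondence construction buys nothing extra here, but it is the version of the argument that generalizes to situations where one cannot reduce to a principal divisor on an affine chart.
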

\begin{proof}
The proof for \cite[Proposition 2.4]{KP} works, noting that every integral morphism of schemes is closed (the theorem of Cohen-Seidenberg, \cite[(6.1.10)]{EGA2} or \cite[Theorem 5.10]{Ati-Mac}).

Here we give an alternative argument which might 
be useful later.
First, since the modulus condition is a local condition, we may assume $X$ is affine $X=\Spec (A)$ and $D$ is principal $D=(f)$.
We give $\overline{W}\subset X\times\Boxbar^1$ the reduced scheme structure and 
consider its restriction to the open subset $ X\times (\Boxbar^1\setminus \{ 0\} )=\Spec (A[1/t])$.
Let $ A[t^{-1}]/J$ be the coordinate ring of $\overline{W}\cap\Spec (A[1/t]) $.
The modulus condition for $W$ is equivalent to the condition that the element $1/tf$ in the ring of total quotients of $A[1/t]/J$ is integral over this ring, i.e., 
that there is a relation in $A[1/t]/J$ of the form
\[
	\frac{1}{t^n} + g_1\frac{1}{t^{n-1}} + \dotsb + g_n = 0
\]
with $g_i\in f^iA[1/t]/J$.
Now let $Y\subset W$ be any closed subset. 
Then we have the image of the above relation to the coordinate ring of $\overline{Y}\cap\Spec (A[1/t])$.
This implies the modulus condition for $Y$.
\end{proof}


\begin{notation}\label{not:modulus}
Let $X$ be scheme, $D$ an effective Cartier divisor on $X$ and $p$ an integer.
\begin{enumerate}[(1)]
\item $\mcal{S}(X|D)$ is the set of all closed subsets of $X$ not meeting $D$.
\item $\mcal{S}(X|D,1)$ is the set of all closed subsets of $X\times\Box^1$ satisfying the modulus condition along $D$.
\item $\mcal{Z}^p(X|D)$ is the free abelian group with generators $[V]$, one for each $v\in X^{(p)}$ whose closure $V$ does not meet $D$.
\item $\mcal{Z}^p(X|D,1)$ is the free abelian group with generators $[W]$, one for each $w\in (X\times\Box^1)^{(p)}$ whose closure $W$ is dominant over $\Box^1$ and satisfies the modulus condition along $D$.
\end{enumerate}
\end{notation}

\begin{remark}\label{rem:modulus}
We remark that 
\[
	\mcal{Z}^p(X|D)=\colim_{Y\in\mcal{S}(X|D)}\mcal{Z}^p_Y(X)
	\quad \text{and} \quad
	\mcal{Z}^p(X|D,1)\subset \colim_{Y\in\mcal{S}(X|D,1)}\mcal{Z}^p_Y(X\times\Box^1).
\]
In the latter formula, the difference consists of cycles not dominant over $\Box^1$.
\end{remark}

\begin{definition}\label{def:Chow_modulus}
Let $X$ be a unicodimensional catenary scheme, $D$ an effective Cartier divisor on $X$ and $p$ an integer.
We define
\[
	\CH^p(X|D) := \coker\bigl(\mcal{Z}^p(X|D,1)\xrightarrow{\iota_0^*-\iota_1^*}\mcal{Z}^p(X|D)\bigr),
\]
where the morphisms $\iota_0^*$ and $\iota_1^*$ are well-defined by Lemma \ref{lem:pullback} and Remark \ref{rem:modulus}.
\end{definition}


\begin{lemma}\label{lem:epsilon}
Let $X$ be a unicodimensional catenary scheme, $D$ an effective Cartier divisor on $X$, $Y$ a closed subset of $X$ not meeting $D$ and $p$ an integer.
Then the canonical morphism
\[
	\mcal{Z}^p_Y(X) \to \CH^p(X|D)
\]
factors through $\CH^p_Y(X)$.
\end{lemma}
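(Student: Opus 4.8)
The plan is to show that the canonical morphism $\mcal{Z}^p_Y(X) \to \CH^p(X|D)$ annihilates the image of the divisor map, so that it descends to the cokernel $\CH^p_Y(X)$. Concretely, for each $w \in X^{(p-1)}\cap Y$ with closure $W$ and each $f \in \kappa(w)^\times$, I must show that $\mrm{div}(f) \in \mcal{Z}^p_Y(X)$ maps to $0$ in $\CH^p(X|D)$. Since $\CH^p(X|D)$ is by definition the cokernel of $\iota_0^*-\iota_1^* \colon \mcal{Z}^p(X|D,1) \to \mcal{Z}^p(X|D)$, it suffices to realize $\mrm{div}(f)$ as $(\iota_0^*-\iota_1^*)([Z])$ for a suitable generator $[Z]$ of $\mcal{Z}^p(X|D,1)$.

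First I would record that $W$ avoids $D$: since $Y$ is closed and $w \in Y$, we have $W \subseteq Y$, and $Y \cap D = \varnothing$ by hypothesis, so $W \cap D = \varnothing$. We may assume $f$ is non-constant, as $\mrm{div}(f)=0$ otherwise; in particular $1+f \neq 0$ in $\kappa(w)$, so $\psi := f/(1+f)$ is a well-defined non-constant rational function on $W$. Let $\Gamma$ be the closure in $X\times\Boxbar^1$ of the graph of $\psi$, and put $Z := \Gamma \cap (X\times\Box^1)$. Then $Z$ is integral of codimension $p$, dominant over $\Box^1$, with $\overline{Z} = \Gamma$ supported on $W\times\Boxbar^1$. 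The crucial observation is that the modulus condition for $Z$ is automatic: because $\mrm{supp}(\Gamma) \subseteq W\times\Boxbar^1$ is disjoint from $D\times\Boxbar^1$, the pullback $\phi_Z^*(D\times\Boxbar^1)$ vanishes, so the inequality of Definition \ref{def:modulus_cond} holds trivially. Hence $[Z] \in \mcal{Z}^p(X|D,1)$.

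It remains to compute the two faces. The point of choosing $\psi$ is that the M\"obius transformation $u \mapsto u/(1+u)$ sends $0$ to $0$, sends $\infty$ to $1$, and sends its own pole $u=-1$ to $\infty$, the value excised in passing from $\Boxbar^1$ to $\Box^1$. Consequently, in the notation of Construction \ref{constr:pullback} applied with the divisors $\{t=0\}$ and $\{t=1\}$ on $X\times\Box^1$, the face $\iota_0^*[Z]$ is the cycle of zeros of $f$ and $\iota_1^*[Z]$ the cycle of poles of $f$, each carrying the length multiplicities that define $\nu_v$; thus $(\iota_0^*-\iota_1^*)([Z]) = \mrm{div}(f)$. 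This is exactly the classical identification of rational equivalence with $\Box^1$-equivalence, and I would verify the multiplicities at the codimension-one points $v$ of $W$ by reducing to the local rings $\mcal{O}_{W,v}$, using that $1+f$ is a unit at a zero of $f$ and that $(1+f)^{-1}$ has the matching order at a pole of $f$.

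The genuinely new content relative to the classical case $D=\varnothing$ is the modulus condition, and this is precisely the step that is trivial here, since $W\cap D = \varnothing$ forces $\phi_Z^*(D\times\Boxbar^1)=0$. I therefore expect the only real care to be bookkeeping: confirming that $\iota_0^*$ and $\iota_1^*$ compute the length multiplicities $\nu_v(f)$ correctly at the (possibly non-regular) codimension-one points of $W$. As this computation is independent of $D$ and is the standard one for the graph of a rational function, it poses no essential obstacle.
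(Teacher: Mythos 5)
Your argument is correct and takes essentially the same route as the paper: the modulus condition is vacuous because $W\subseteq Y$ is disjoint from $D$, and one exhibits $\mrm{div}(f)$ as $(\iota_0^*-\iota_1^*)$ of an explicit cycle supported on $W\times\Box^1$. The paper uses the Cartier divisor of $f+(1-f)t$ on $W\times\Box^1$ (so the face at $t=1$ is $\mrm{div}(1)=0$) instead of the graph of $f/(1+f)$, but this is only a cosmetic difference in the choice of interpolating cycle.
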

\begin{proof}
Suppose given $w\in X^{(p-1)}\cap Y$ and $f\in\kappa(w)^\times$.
We have to show that $\mrm{div}(f)=0$ in $\CH^p(X|D)$.
Consider the Cartier divisor $E$ on $W\times\Box^1$ defined by $f+(1-f)t$.
Then $E$ gives an element $[E]$ in $\mcal{Z}^p(X|D,1)$ and $\iota_0^*[E]-\iota_1^*[E]=\mrm{div}(f)$ in $\mcal{Z}^p(X|D)$.
This prove the lemma.
\end{proof}

\begin{proposition}\label{prop:cycle}
Let $X$ be a unicodimensional catenary scheme, $D$ an effective Cartier divisor on $X$ and $p$ an integer.
Then the sequence 
\[
\xymatrix@1{
	\displaystyle\colim_{Y\in\mcal{S}(X|D,1)}\mcal{Z}^p_Y(X\times\Box^1) \ar[r]^-{\iota_0^*-\iota_1^*} 
	& \displaystyle\colim_{Y\in\mcal{S}(X|D)}\CH^p_Y(X) \ar[r]^-{\epsilon} & \CH^p(X|D) \ar[r] & 0
}
\]
is exact.
Here, $\iota_0^*,\iota_1^*$ are the morphisms defined in Construction \ref{constr:pullback}, and $\epsilon$ is the canonical morphism as in Lemma \ref{lem:epsilon}.
\end{proposition}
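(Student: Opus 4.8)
The plan is to present both the middle and the right-hand terms as quotients of the single group $Z := \mcal{Z}^p(X|D)$ and then study $\iota_0^*-\iota_1^*$ on generators. Since the indexing categories are filtered, taking the colimit of the cokernel presentations of $\CH^p_Y(X)$ is exact, so $\colim_{Y\in\mcal{S}(X|D)}\CH^p_Y(X)=Z/R_{\mrm{div}}$, where $R_{\mrm{div}}$ is the subgroup generated by the $\mrm{div}(f)$ with $f\in\kappa(w)^\times$ and $\overline{\{w\}}\cap D=\varnothing$ (using Remark \ref{rem:modulus} for $\colim_Y\mcal{Z}^p_Y(X)=Z$). On the other hand $\CH^p(X|D)=Z/R_{\mrm{mod}}$ by Definition \ref{def:Chow_modulus}, where $R_{\mrm{mod}}$ is the image of $\iota_0^*-\iota_1^*$ on the dominant cycles $\mcal{Z}^p(X|D,1)$. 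Lemma \ref{lem:epsilon} is precisely the inclusion $R_{\mrm{div}}\subseteq R_{\mrm{mod}}$, so $\epsilon$ is the evident projection: it is surjective, and $\ker\epsilon=R_{\mrm{mod}}/R_{\mrm{div}}$. This settles surjectivity and pins down the kernel.

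Next I would compute the image of $\iota_0^*-\iota_1^*$ on the dominant part $\mcal{Z}^p(X|D,1)\subseteq\colim_Y\mcal{Z}^p_Y(X\times\Box^1)$. For a dominant $W$ the pullbacks $\iota_q^*[W]$ are honest cycles (the first branch of Construction \ref{constr:pullback}), so the image of this part in $Z/R_{\mrm{div}}$ is exactly $(R_{\mrm{mod}}+R_{\mrm{div}})/R_{\mrm{div}}=R_{\mrm{mod}}/R_{\mrm{div}}=\ker\epsilon$. This already yields the inclusion $\ker\epsilon\subseteq\image(\iota_0^*-\iota_1^*)$ and reduces the whole statement to showing that the \emph{non-dominant} generators of the source map into $\ker\epsilon$; equivalently, that $\epsilon\circ(\iota_0^*-\iota_1^*)$ kills every non-dominant $[W]$.

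For the non-dominant cycles I would proceed as follows. If $W$ lies in a single fibre $X\times\{q\}$ with $q\neq 0,1$, or $W\subseteq X\times\{0\}$ or $X\times\{1\}$, then $\iota_0^*[W]$ and $\iota_1^*[W]$ both vanish --- by empty intersection when $W$ misses the face, and by the second branch of Construction \ref{constr:pullback} together with the triviality of the normal bundle of $\{t=0\}$ or $\{t=1\}$ when $W$ is contained in it; over a base field every non-dominant cycle is of this type, so the claim is immediate there. In general $t$ is a nonconstant rational function on $\overline{W}$, and $\iota_0^*[W]-\iota_1^*[W]$ is the proper pushforward along $\overline{W}\hookrightarrow X\times\Boxbar^1\to X$ of the principal divisor $\mrm{div}_{\overline W}(t/(t-1))=[\overline W\cap\{t=0\}]-[\overline W\cap\{t=1\}]$. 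Writing $V$ for the image of $\overline W$ in $X$ and $h$ for the norm of $t/(t-1)$ (the pushforward being $0$ when $\dim V<\dim\overline W$), this difference equals $\mrm{div}_V(h)$, which by the recipe of Lemma \ref{lem:epsilon} is $\iota_0^*[E]-\iota_1^*[E]$ for $E:=\mrm{div}_{V\times\Box^1}(h+(1-h)t)$. Since $E$ is dominant over $\Box^1$, once one knows $E$ is admissible it follows that $\iota_0^*[W]-\iota_1^*[W]\in R_{\mrm{mod}}$ and so dies in $\CH^p(X|D)$.

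The main obstacle is exactly the admissibility of $E$, i.e.\ that $E$ satisfies the modulus condition along $D$. As $E$ is the graph $t=h/(h-1)$, its modulus condition amounts to the inequality of zero divisors $\mrm{div}^0(f)\le\mrm{div}^0(h-1)$ on the normalization of $V$, with $f$ a local equation of $D$. This must be deduced from the modulus condition for $W$, namely $\mrm{div}^0(f)\le\mrm{div}^0(1/t)$ on $\overline{W}^N$: because $t/(t-1)-1=1/(t-1)$ vanishes to the same order as $1/t$ along $t=\infty$, the function $t/(t-1)$ is congruent to $1$ modulo $D$ to the required order upstairs, and one must verify that this congruence is preserved under the norm along the finite map $\overline{W}^N\to V$. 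This is an order-of-vanishing estimate for the norm, where ramification (notably in the wild case) makes the bookkeeping delicate, and it is here that the real work lies.
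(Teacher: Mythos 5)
Your first two paragraphs are correct and make explicit what the paper leaves implicit: both the middle term and $\CH^p(X|D)$ are quotients of $\mcal{Z}^p(X|D)$ because the indexing posets are filtered, Lemma \ref{lem:epsilon} gives $R_{\mrm{div}}\subseteq R_{\mrm{mod}}$, and the generators of $\mcal{Z}^p(X|D,1)$ already map onto $\ker\epsilon=R_{\mrm{mod}}/R_{\mrm{div}}$, so everything reduces to showing that $\epsilon\circ(\iota_0^*-\iota_1^*)$ kills the remaining generators. Your treatment of a cycle contained in a single fibre over $q\ne 0,1$ (both pullbacks vanish) and of a cycle contained in a face (the second branch of Construction \ref{constr:pullback} gives $j_*[\mcal{O}_{X\times\Box^1}(X\times\{a\})\vert_V]=0$ because the face is principal) is exactly the paper's argument. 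But the proposal then stops at the decisive point: your final paragraph announces that the admissibility of $E$ ``is where the real work lies'' and does not prove it. An argument that terminates in an unestablished order-of-vanishing estimate for norms along a possibly wildly ramified finite map is not a proof, so as written the proposal is incomplete.

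The case you get stuck on does not occur in the paper's proof, and it is worth seeing why. The paper treats every $V$ with $v\notin X\times\{0,1\}$ --- i.e.\ every $V$ satisfying the modulus condition and contained in neither face --- as a generator of $\mcal{Z}^p(X|D,1)$, so that $\iota_0^*[V]-\iota_1^*[V]$ is a defining relation of $\CH^p(X|D)$ and there is nothing to check; only $V\subseteq X\times\{0\}$ or $V\subseteq X\times\{1\}$ remains. In other words, ``dominant over $\Box^1$'' in Notation \ref{not:modulus}(4) is being used to mean ``contained in neither $X\times\{0\}$ nor $X\times\{1\}$'' (proper intersection with the faces, as in Binda--Saito); under that reading your hard case is vacuous and your argument, minus the last paragraph, is complete and coincides with the paper's. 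Under the literal reading of dominance that you adopt (dense image in $\Box^1=\mbb{A}^1_{\mbb{Z}}$), the difficulty you point to is genuine, but then the statement itself already fails: for $X=\Spec\mbb{Z}$, $D=\varnothing$, $p=1$, no codimension-one subset of $\mbb{A}^1_{\mbb{Z}}$ dominates $\mbb{A}^1_{\mbb{Z}}$, so $\mcal{Z}^1(X|D,1)=0$, $\CH^1(X|\varnothing)=\bigoplus_\ell\mbb{Z}\,[(\ell)]$, and $\epsilon$ maps the zero group onto a nonzero one (indeed the $E$ of Lemma \ref{lem:epsilon} is then not dominant either). So the cure is to fix the reading of the definition, not to carry out the norm computation. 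Note also that your reduction to $\mrm{div}_V(h)$ cannot land in $R_{\mrm{div}}$ in general, since the image of $\overline{W}$ in $X$ meets $D$ whenever $\overline{W}$ meets $D\times\{\infty\}$; this confirms that your route really does force the admissibility of $E$, i.e.\ you have correctly located, but not closed, the gap in your approach.
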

\begin{proof}
We only have to show that the composite $\epsilon\circ(\iota_0^*-\iota_1^*)$ is zero.
Let $Y\in\mcal{S}(X|D,1)$ and $v\in(X\times\Box^1)^{(p)}\cap Y$.
Then the closure $V$ of $v$ satisfies the modulus condition along $D$ by Lemma \ref{lem:containment}.
If $v\notin X\times\{0,1\}$, then it is immediate from the definition of $\CH^p(X|D)$ that $(\epsilon\circ(\iota_0^*-\iota_1^*))([V])=0$.
If $v\in X\times\{0,1\}$, say $v\in X\times\{0\}$, then $\iota_1^*[V]=0$ and $\iota_0^*[V]=j_*[\mcal{O}_{X\times\Box^1}(X)\vert_V]=0$ since $X$ is a principal divisor in $X\times\Box^1$.
This completes the proof.
\end{proof}

\section{A presentation of relative $K_0$-group}\label{K-theory}

For a scheme $X$, we denote by $K(X)$ Thomason-Trobaugh's $K$-theory spectrum \cite[3.1]{TT90}.
The spectrum $K(X)$ is contravariant functorial in $X$.

\begin{definition}
Let $X$ be a scheme.
\begin{enumerate}[(1)]
\item Let $D$ be a closed subscheme of $X$.
We define $K(X,D)$ to be the homotopy fiber of the canonical morphism $K(X)\to K(D)$.
For an integer $n$, we write $K_n(X,D)=\pi_nK(X,D)$.
\item Let $Y$ be a closed subset of $X$.
We define $K^Y(X)$ to be the homotopy fiber of the canonical morphism $K(X)\to K(X\setminus Y)$.
For an integer $n$, we write $K_n^Y(X)=\pi_nK^Y(X)$.
\end{enumerate}
\end{definition}

The goal of this section is to prove the following theorem.
\begin{theorem}\label{thm:K-theory}
Let $X$ be a regular scheme and $D$ an effective Cartier divisor on $X$.
Assume that $D$ admits an affine open neighborhood in $X$.
Then the sequence
\[
\xymatrix@1{
	\displaystyle\colim_{Y\in\mcal{S}(X|D,1)}K_0^Y(X\times\Box^1) \ar[r]^-{\iota_0^*-\iota_1^*} 
	& \displaystyle\colim_{Y\in\mcal{S}(X|D)}K_0^Y(X) \ar[r]^-{\epsilon} & K_0(X,D) \ar[r] & 0
}
\]
is exact.
Here, $\epsilon$ denotes the obvious morphism.
\end{theorem}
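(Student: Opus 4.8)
The plan is to exhibit $\epsilon$ as the map induced on homotopy fibers by a comparison of localization fiber sequences, and then to extract both surjectivity and the kernel from the resulting long exact sequence. For each closed $Y\subseteq X$, Thomason--Trobaugh localization provides a fiber sequence $K^Y(X)\to K(X)\to K(X\setminus Y)$. As filtered colimits of spectra preserve fiber sequences and $K(X)$ does not depend on $Y$, passing to the colimit over $Y\in\mcal{S}(X|D)$ yields a fiber sequence $\colim_{Y}K^Y(X)\to K(X)\to\colim_{U}K(U)$, with $U=X\setminus Y$ running over the open neighborhoods of $D$. Invoking the hypothesis, I would fix an affine open neighborhood $\Spec A$ of $D$; the basic affine neighborhoods $\Spec A_g$, for $g$ invertible on $D$, are then cofinal among all neighborhoods, so by continuity of $K$-theory $\colim_U K(U)\simeq K(A_D)$, where $A_D=S^{-1}A$ and $S\subseteq A$ is the set of functions invertible on $D$. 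The decisive structural fact is that $S$ maps to units of $\mcal{O}_D=A/I_D$, so $A_D/I_D\cong\mcal{O}_D$ and the restriction $\colim_U K(U)\to K(D)$ is exactly $K(A_D)\to K(A_D/I_D)$.

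Comparing with the defining sequence $K(X,D)\to K(X)\to K(D)$ through the identity on $K(X)$ and the restriction $K(A_D)\to K(A_D/I_D)$ displays $\epsilon$ as the map induced on fibers. Since the two middle terms agree, the homotopy fiber of $\epsilon$ is $\Omega$ of the homotopy fiber of the map on bases; that is, $\hofib(\epsilon)\simeq\Omega K(A_D,I_D)$ with $K(A_D,I_D):=\hofib\bigl(K(A_D)\to K(A_D/I_D)\bigr)$. The associated long exact sequence therefore contains
\[
	K_1(A_D,I_D)\xrightarrow{\ \partial\ }\colim_{Y}K_0^Y(X)\xrightarrow{\ \epsilon\ }K_0(X,D)\longrightarrow K_0(A_D,I_D).
\]

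Surjectivity of $\epsilon$ thus reduces to $K_0(A_D,I_D)=0$, and this is where the affine neighborhood is essential. Every maximal ideal of $A_D$ contains $I_D$: if $\mfk{q}$ were maximal among primes disjoint from $S$ with $I_D\not\subseteq\mfk{q}$, then $\mfk{q}+I_D=A_D$ would yield an element of $S$ inside $\mfk{q}$, a contradiction. Hence $I_D\subseteq\mrm{Jac}(A_D)$, and a Nakayama argument shows that $K_0(A_D)\to K_0(A_D/I_D)$ is injective while $K_1(A_D)\to K_1(A_D/I_D)$ is surjective; the long exact sequence of the pair then gives $K_0(A_D,I_D)=0$. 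For $D$ without an affine neighborhood this localization need not have $I_D$ in its radical, which is consistent with Example \ref{counterexample}.

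It remains to identify $\ker\epsilon$. The displayed sequence gives $\ker\epsilon=\image\partial$, so it is enough to prove $\image\partial\subseteq\image(\iota_0^*-\iota_1^*)\subseteq\ker\epsilon$. The right-hand inclusion is the $K$-theoretic counterpart of Proposition \ref{prop:cycle}, proved by the same mechanism: over $t=0$ or $t=1$ the divisor $X\times\{i\}$ is principal, while off these fibers $\Box^1$-homotopy invariance of $K$-theory together with the modulus condition makes the two restrictions agree after projecting to the germ, so that $\epsilon\circ(\iota_0^*-\iota_1^*)=0$. The left-hand inclusion is the crux. A class of $K_1(A_D,I_D)$ is represented near $D$ by an invertible matrix congruent to the identity modulo $I_D$, and, in the simplest case, by a unit $u$ with $u\equiv 1\bmod I_D$; I would realize $\partial[u]$ as the boundary attached to the zero-locus of $v:=1+(u-1)t$ on $X\times\Box^1$, in direct analogy with the function $f+(1-f)t$ of Lemma \ref{lem:epsilon}. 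The main obstacle is to verify that this support satisfies the modulus condition of Definition \ref{def:modulus_cond}: since $u-1\in I_D$, the zero of $v$ recedes to $t=\infty$ as one approaches $D$, which is precisely what the modulus inequality on the normalized closure records. Carrying this out rigorously at the level of $K_0^Y$-classes rather than cycles, and for arbitrary generators of the relative group $K_1(A_D,I_D)$, is the principal technical work of the proof.
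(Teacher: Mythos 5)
Your overall skeleton matches the paper's: surjectivity of $\epsilon$, the complex property $\epsilon\circ(\iota_0^*-\iota_1^*)=0$, and exactness in the middle obtained by lifting a class killed by $\epsilon$ to $K_1$ of a punctured neighbourhood and spreading it out over $\Box^1$ via a matrix of the form $\alpha'+(1-t)\epsilon$ whose determinant is an admissible polynomial. Your surjectivity argument (localizing at the set $S$ of functions invertible on $D$, checking $I_DA_D\subseteq\mrm{Jac}(A_D)$, and concluding $K_0(A_D,I_D)=0$) is sound and is essentially a self-contained version of what the paper delegates to Lemma \ref{lem:surjectivity} and \cite[Lemma 3.4]{Iw19}; the identification $\ker\epsilon=\image\,\partial$ via the long exact sequence of the comparison of fiber sequences is likewise a clean way to organize the exactness at the middle term.

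The genuine gap is the inclusion $\image(\iota_0^*-\iota_1^*)\subseteq\ker\epsilon$, which you dispose of in one sentence by appealing to ``$\Box^1$-homotopy invariance together with the modulus condition.'' Homotopy invariance (for regular $X$) only gives $\iota_0^*\beta=\iota_1^*\beta$ after composing with $K_0(X,D)\to K_0(X)$; the difference $\epsilon(\iota_0^*\beta-\iota_1^*\beta)$ therefore lies in the image of the boundary map $K_1(D)\to K_0(X,D)$, and showing that it actually vanishes is the heart of the theorem. This is the paper's rigidity Lemma \ref{lem:rigidity}, by far the longest argument in the paper: one reduces to showing $Rq_*[(\mcal{F},\theta,\mcal{F})]=0$ for $\theta=(t-1)/t$, introduces the adic filtration along $D\times\{\infty\}$, and uses the integral dependence relation $\frac{1}{t^n}+g_1\frac{1}{t^{n-1}}+\dotsb+g_n=0$ with $g_k\in f^kA[1/t]/J$ supplied by the modulus condition to prove that $\theta$ acts as the identity on $Li^*Rq_*\Fil^l\mcal{F}$ for $l\gg 0$. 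Nothing in your sketch performs this step, and it cannot be finessed: when the modulus condition is weakened to the topological condition $Y\cap(D\times\Box^1)=\varnothing$, the analogous vanishing requires an extra $K_1$-regularity hypothesis on $D$ (cf.\ Lemma \ref{lem:K-theory_top}), so no purely formal homotopy-invariance argument can suffice. By contrast, the part you defer as ``the principal technical work'' (realizing $\partial$ of a relative $K_1$-class by a support satisfying the modulus condition) is the comparatively routine Lemma \ref{lem:K_1}, and your sketch of it is already essentially the paper's construction.
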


The surjectivity of $\epsilon$ has been observed in \cite{Iw19}.
\begin{lemma}\label{lem:surjectivity}
Let $X$ be a scheme and $D$ a closed subscheme of $X$.
Assume that $D$ has an affine open neighborhood in $X$.
Then the canonical morphism
\[
	\epsilon\colon \colim_{Y\in\mcal{S}(X|D)}K_0^Y(X) \to K_0(X,D)
\]
is surjective.
\end{lemma}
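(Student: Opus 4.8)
The plan is to deduce surjectivity from a comparison of the two defining homotopy fibre sequences and a four-lemma argument. By definition $K^Y(X)=\hofib(K(X)\to K(X\setminus Y))$ and $K(X,D)=\hofib(K(X)\to K(D))$. For $Y\in\mcal{S}(X|D)$ we have $D\subseteq X\setminus Y$, so the restriction $K(X)\to K(D)$ factors through $K(X)\to K(X\setminus Y)$, and the induced map on horizontal fibres is exactly the map whose colimit is $\epsilon$. Passing to homotopy groups and taking the filtered (hence exact) colimit over $Y$, I would work in the commutative ladder of long exact sequences
\[
\xymatrix{
\colim_Y K_1(X\setminus Y) \ar[r]\ar[d]_{a} & \colim_Y K_0^Y(X)\ar[r]\ar[d]_{\epsilon} & K_0(X)\ar[r]\ar@{=}[d] & \colim_Y K_0(X\setminus Y)\ar[d]^{c}\\
K_1(D)\ar[r] & K_0(X,D)\ar[r] & K_0(X)\ar[r] & K_0(D),
}
\]
in which the outer vertical maps are induced by the restrictions $K(X\setminus Y)\to K(D)$.

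As $Y$ runs through $\mcal{S}(X|D)$, the complements $X\setminus Y$ run through \emph{all} open neighbourhoods of $D$, so $\colim_Y K_n(X\setminus Y)=\colim_{U'\supseteq D}K_n(U')$. Using the hypothesis that $D$ has an affine open neighbourhood $U=\Spec A$, I would first check that the affine opens containing $D$ are cofinal among all neighbourhoods of $D$: any $W\supseteq D$ contains a basic open $D(g)$ with $D\subseteq D(g)\subseteq W$ for a suitable $g\in A$, so in the colimit I may restrict to affine $U'$. A direct diagram chase then shows that $\epsilon$ is surjective as soon as $a$ is surjective and $c$ is injective, the middle vertical map being the identity. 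This reduces the lemma to two statements about the comparison maps to $K_*(D)$.

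For the surjectivity of $a$, I would represent a class of $K_1(D)$ by an invertible matrix $M\in\GL_n(\mcal{O}(D))$, lift it (using that $U$ is affine) to $\tilde M\in M_n(\mcal{O}(U))$, and note that $\det\tilde M$ restricts to the unit $\det M$ on $D$; hence $\tilde M$ becomes invertible over the neighbourhood $U\setminus V(\det\tilde M)$ of $D$, yielding the desired lift in $\colim_{U'}K_1(U')$. The injectivity of $c$ is the crux of the argument and the place where the affine neighbourhood is genuinely indispensable. Here I would take a class dying in $K_0(D)$, represent it over an affine $U'$ as $[P]-[\mcal{O}_{U'}^N]$ with $P$ projective and $P|_D$ free, lift a trivialising frame of $P|_D$ to a morphism $\phi\colon\mcal{O}_{U'}^N\to P$ whose restriction to $D$ is an isomorphism, and then spread out: $\coker\phi$ is coherent with $(\coker\phi)|_D=0$, so its support is a closed set avoiding $D$, and on the neighbourhood $U''=U'\setminus\mrm{supp}(\coker\phi)$ the map $\phi$ is a surjection between locally free sheaves of equal rank, hence an isomorphism; thus $P|_{U''}$ is free and the class vanishes in $K_0(U'')$. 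The main obstacle is precisely this last spreading-out step: the vanishing of a class \emph{over} $D$ must be upgraded to an actual trivialisation of a representative on a neighbourhood, which rests on the ability to lift frames and apply Nakayama, and this is exactly what the affine open neighbourhood of $D$ provides.
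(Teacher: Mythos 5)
Your argument is correct, but it takes a different route from the paper. The paper's proof is a two-step reduction: first it uses the localization theorem of Thomason--Trobaugh to get the exact sequence $K_0^{X\setminus U}(X)\to K_0(X,D)\to K_0(U,D)$ for an affine open neighborhood $U$ of $D$ (the first term already lies in the image of $\epsilon$ since $X\setminus U\in\mcal{S}(X|D)$), thereby reducing to the case $X$ affine, and then it invokes \cite[Lemma 3.4]{Iw19} to finish. You instead run a four-lemma argument directly on the ladder comparing the colimit of the fibre sequences $K^Y(X)\to K(X)\to K(X\setminus Y)$ with $K(X,D)\to K(X)\to K(D)$, which correctly reduces the lemma to the surjectivity of $\colim_{U'}K_1(U')\to K_1(D)$ and the injectivity of $\colim_{U'}K_0(U')\to K_0(D)$; your cofinality argument for affine neighborhoods $D(g)\subseteq U$ and your matrix-lifting and frame-spreading arguments for these two statements are sound, and they isolate exactly where the affine hypothesis is used. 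Two points are glossed over but are routine: from $c(\xi)=0$ you only get that $P|_D$ is \emph{stably} free, so you must first add a free summand to make it free; and on the complement of $\mrm{supp}(\coker\phi)$ the rank of $P$ need only equal $N$ on the components meeting $D$, so you should further shrink to the open and closed locus where the rank is $N$ before concluding that the surjection $\phi$ is an isomorphism. What each approach buys: the paper's proof is shorter but outsources the real content to the affine case established in \cite{Iw19}, whereas yours is self-contained and elementary; in substance your statements about $a$ and $c$ are close to what the cited affine lemma encodes, but as written the decompositions are genuinely different.
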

\begin{proof}
Let $U$ be an affine open neighborhood of $D$ in $X$.
By the localization theorem \cite[7.4]{TT90}, the sequence
\[
\xymatrix@1{
	K_0^{X\setminus U}(X) \ar[r] & K_0(X,D) \ar[r] & K_0(U,D)
}
\]
is exact.
Hence, we may replace $X$ by $U$ and reduce to the case $X$ is affine.
Then the result follows from \cite[Lemma 3.4]{Iw19}.
\end{proof}

\subsection*{The rigidity}

\begin{lemma}\label{lem:rigidity}
Let $X$ be a regular scheme and $D$ an effective Cartier divisor.
Let $Y\in\mcal{S}(X|D,1)$ and denote its closure in $X\times\Boxbar^1$ by $\overline{Y}$.
Assume that $D$ admits an affine open neighborhood in $X$.
Then the two morphisms
\[
	\iota_0^*,\iota_1^*\colon K_0^{\overline{Y}}(X\times\Boxbar^1) \to K_0(X,D)
\]
coincide.
\end{lemma}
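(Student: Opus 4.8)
The plan is to combine the geometric consequence of the modulus condition with the homotopy invariance of $G$-theory for regular schemes, and then to upgrade the resulting equality in $K_0(X)$ to one in the relative group $K_0(X,D)$ using the affine neighbourhood hypothesis.

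First I would extract the key geometric input from the modulus condition. Writing $\bar p\colon X\times\Boxbar^1\to X$ for the projection, I claim that $\overline{Y}\cap(D\times\Boxbar^1)\subseteq X\times\{\infty\}$. Indeed, the inequality of effective Cartier divisors $\phi_{\overline{Y}}^*(D\times\Boxbar^1)\le\phi_{\overline{Y}}^*(X\times\{\infty\})$ of Definition \ref{def:modulus_cond} forces a containment of their supports on the normalisation of $\overline{Y}$, and since the normalisation map is surjective this descends to a containment of supports on $\overline{Y}$ itself. Consequently the fibres $\overline{Y}_q:=\overline{Y}\cap(X\times\{q\})$ for $q=0,1$ are disjoint from $D$, so that each $\iota_q^*$ factors as $K_0^{\overline{Y}}(X\times\Boxbar^1)\to K_0^{\overline{Y}_q}(X)\to K_0(X,D)$, the second arrow being the canonical map attached to a support avoiding $D$. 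Since $\iota_0,\iota_1$ both factor through the open immersion $X\times\Box^1\hookrightarrow X\times\Boxbar^1$, I may replace $\alpha$ by its restriction $\beta\in K_0^{Y}(X\times\Box^1)$ with $Y:=\overline{Y}\cap(X\times\Box^1)\subseteq(X\setminus D)\times\Box^1$.

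The conceptual core is homotopy invariance. Since $X$ and $X\times\Boxbar^1$ are regular, $K^{\overline{Y}}(X\times\Boxbar^1)\simeq G(\overline{Y})$ and $K^{\overline{Y}_q}(X)\simeq G(\overline{Y}_q)$, and under these identifications each $\iota_q^*$ becomes the refined Gysin map attached to the Cartesian square cutting out $\overline{Y}_q$ by the divisor $t=q$. Compatibility of the Gysin pullback with the proper pushforward $\bar p_*$ then reduces the comparison to the two maps $\iota_0^!,\iota_1^!\colon G_0(X\times\Boxbar^1)\to G_0(X)$; for finite $q$ these both factor through restriction to $X\times\Box^1$, so $\mbb{A}^1$-homotopy invariance of $G$-theory yields $\iota_0^!=\iota_1^!$. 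I would thereby obtain that $\iota_0^*\alpha$ and $\iota_1^*\alpha$ already agree \emph{in $K_0(X)$}: the difference $\delta:=\iota_0^*\alpha-\iota_1^*\alpha$, which lives in $K_0^{W}(X)$ for $W:=\overline{Y}_0\cup\overline{Y}_1\subseteq X\setminus D$, maps to $0$ under the forgetful map $K_0^W(X)\to K_0(X)$.

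It remains to promote this to the vanishing of $\delta$ in $K_0(X,D)$, and this is the step I expect to be the main obstacle. From the fibre sequence $K^W(X)\to K(X,D)\to K(X\setminus W,D)$ together with the localization sequence for the pair $(X,W)$, one finds $\delta=\partial'\zeta$ for some $\zeta\in K_1(X\setminus W)$ via the localization boundary $\partial'$, and $\delta=0$ as soon as $\zeta$ can be chosen to restrict to $0$ in $K_1(D)$; the genuine obstruction thus lies in $K_1(D)/\image\bigl(K_1(X)\to K_1(D)\bigr)$, which does not vanish formally. This is exactly where the hypothesis that $D$ admits an affine open neighbourhood $U$ is essential: as in the proof of Lemma \ref{lem:surjectivity}, the localization sequence $K^{X\setminus U}(X)\to K(X,D)\to K(U,D)$ (note $X\setminus U\in\mcal{S}(X|D)$) lets me replace $X$ by the affine regular scheme $U$, on which the obstruction can be killed---concretely, by exhibiting an explicit $\Box^1$-homotopy in the spirit of Lemma \ref{lem:epsilon}, whose interpolating unit (the analogue of $f+(1-f)t$) is a nonzerodivisor away from $t=\infty$ by the modulus condition and hence trivialises the relative $K_1$-contribution along $D$. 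Carrying out this compatibility carefully is the crux of the argument.
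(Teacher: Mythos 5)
Your first two steps are sound: the modulus condition does force $\overline{Y}\cap(D\times\Boxbar^1)\subseteq X\times\{\infty\}$ (so the fibres at $t=0,1$ avoid $D$ and each $\iota_q^*$ factors through $K_0^{\overline{Y}_q}(X)$ with $\overline{Y}_q\in\mcal{S}(X|D)$), and the two composites $K_0^{\overline{Y}}(X\times\Boxbar^1)\to K_0(X)$ do coincide --- one does not even need $G$-theory or refined Gysin maps for this, since $\iota_0^*=\iota_1^*$ already on $K_0(X\times\Boxbar^1)$ by the projective bundle formula. But these steps use the modulus condition only through the support containment, and they prove something strictly weaker than the lemma: the kernel of $K_0(X,D)\to K_0(X)$ is the image of $\partial\colon K_1(D)\to K_0(X,D)$, and the entire content of the lemma is that the class of $\delta=\iota_0^*\alpha-\iota_1^*\alpha$ inside this kernel vanishes. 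Your third step, which you yourself flag as ``the crux,'' is exactly this content and is not carried out: you do not identify the obstruction class in $K_1(D)$ modulo the relevant image, you do not exhibit the ``interpolating unit,'' and the remark that it ``is a nonzerodivisor away from $t=\infty$ by the modulus condition'' is not a mechanism for trivialising a class in relative $K_1$. As written, the proposal is a correct reduction followed by a restatement of the problem.

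For comparison, the paper never passes through $K_0(X)$. It uses the presentation of $K_0(X,D)$ by triples $(P,\alpha,Q)$, writes $[(L\iota_0^*\mcal{F},0,0)]-[(L\iota_1^*\mcal{F},0,0)]=-Rq_*[(\mcal{F},\theta,\mcal{F})]$ with $\theta=(t-1)/t$, using the two inclusions $j_0,j_1\colon\mcal{O}(-1)\to\mcal{O}_{\Boxbar^1}$, and then shows $Rq_*[(\mcal{F},\theta,\mcal{F})]=0$ by a filtration argument: the adic filtration $\Fil^*\mcal{F}$ along $D\times\{\infty\}$ satisfies $(1/t)\Fil^l\mcal{F}\subset\Fil^{l+1}\mcal{F}$ near $\overline{Y}\cap(D\times\Boxbar^1)$, so $\theta$ is the identity on each graded piece, and the integral dependence relation $t^{-n}+g_1t^{-(n-1)}+\dotsb+g_n=0$ with $g_k\in f^kA[1/t]/J$ supplied by the modulus condition shows that for $l\gg0$ one has $\Fil^{l+1}M=f\Fil^lM$ with $\Fil^lM$ free of $f$-power torsion, whence $Li^*Rq_*\Fil^l\mcal{F}$ is concentrated in degree $0$ and $\theta$ acts on it as the identity. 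This quantitative use of the modulus condition is precisely what your sketch is missing; to complete your route you would have to extract from the same relation an explicit trivialisation of the class of $\delta$ in $K_1(D)$, which amounts to the paper's computation in different clothing.
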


\begin{proof}
First of all, let us fix notation for morphisms of schemes:
\[
\xymatrix{
	X \ar@<0.5ex>[r]^-{\iota_0} \ar@<-0.5ex>[r]_-{\iota_1} & X\times\Boxbar^1 \ar[d]^p \ar[r]^-q & X & D \ar[l]_-i \\
	& \Boxbar^1 & & 
}
\]
where $p,q$ are the canonical projections and $i$ is the canonical inclusion.

According to \cite[Theorem 3.1]{Iw19}, $K_0(X,D)$ is generated by triples $(P,\alpha,Q)$ where $P,Q$ are perfect complexes of $X$ and $\alpha$ is a quasi-isomorphism $Li^*P \xrightarrow{\sim} Li^*Q$.
The morphism 
\[
	\iota_a^* \colon K_0^{\overline{Y}}(X\times\Boxbar^1) \to K_0(X,D) \quad a\in\{0,1\}
\]
sends $[P]$ to $[(L\iota_a^*P,0,0)]$, where $P$ is a perfect complex of $X\times\Boxbar^1$ whose support lies in $\overline{Y}$.
Since $X$ is regular, $K_0^{\overline{Y}}(X\times\Boxbar^1)$ is generated by coherent $\mcal{O}_{\overline{Y}}$-modules.
Hence, it suffices to show that, for any coherent $\mcal{O}_{\overline{Y}}$-modules $\mcal{F}$,
\[
	[(L\iota_0^*\mcal{F},0,0)]=[(L\iota_1^*\mcal{F},0,0)]
\]
in $K_0(X,D)$.
In the sequel, we denote by $\mcal{F}$ a coherent $\mcal{O}_{\overline{Y}}$-module.

\subsubsection*{First calculation in $K_0(X,D)$}
Recall that we have fixed a rational coordinate $t$ of $\Boxbar^1$.
We denote by $\mcal{O}(-1)$ the invertible sheaf on $\Boxbar^1$ generated by $t$.
We write $j_0$ for the canonical inclusion $\mcal{O}(-1)\to\mcal{O}_{\Boxbar^1}$ sending $t$ to $t$, and write $j_1$ for the inclusion $\mcal{O}(-1)\to\mcal{O}_{\Boxbar^1}$ sending $t$ to $t-1$.
Then we have an exact triangle
\[
\xymatrix@1{
	Lp^*\mcal{O}(-1)\otimes_{X\times\Boxbar^1}^L\mcal{F} \ar[r]^-{p^*j_a} & \mcal{F} \ar[r] & \iota_{a*}L\iota_a^*\mcal{F} \ar[r]^-+ &
}
\]
of perfect complexes of $X\times\Boxbar^1$ for $a\in\{0,1\}$.
Consequently,
\[
	[(L\iota_a^*\mcal{F},0,0)] = -Rq_*[(Lp^*\mcal{O}(-1)\otimes_{X\times\Boxbar^1}^L\mcal{F},p^*j_a,\mcal{F})]
\]
in $K_0(X,D)$.
We set $\theta:=(t-1)/t$.
Then we have a commutative diagram (the vertical arrow is defined after restricting to $\Boxbar^1\setminus\{0\}$)
\[
\xymatrix@R-1pc{
	& \mcal{O}_{\Boxbar^1} \ar@{.>}[dd]^{\text{multiplication by }\theta} \\
	\mcal{O}(-1) \ar[ur]^{j_0} \ar[dr]_{j_1} & \\
	& \mcal{O}_{\Boxbar^1}.
}
\]
Since $Y$ satisfies the modulus condition, the multiplication by $\theta$ on $\mcal{O}_{\overline{Y}}$-modules makes sense in a neighborhood of $\overline{Y}\cap(D\times\Boxbar^1)$.
It follows from the above diagram that 
\[
	[(Lp^*\mcal{O}(-1)\otimes_{X\times\Boxbar^1}^L\mcal{F},p^*j_0,\mcal{F})] + [(\mcal{F},\theta,\mcal{F})]
	= [(Lp^*\mcal{O}(-1)\otimes_{X\times\Boxbar^1}^L\mcal{F},p^*j_1,\mcal{F})].
\]
Hence, it remains to show that
\[
	Rq_*[(\mcal{F},\theta,\mcal{F})]=0
\]
in $K_0(X,D)$.

\subsubsection*{Adic filtration on $\mcal{F}$}
Let $\Fil^*\mcal{F}$ be the adic filtration on $\mcal{F}$ with respect to the ideal defining $D\times\{\infty\}$ in $X\times\Boxbar^1$.
Since $(1/t)\Fil^l\mcal{F}\subset \Fil^{l+1}\mcal{F}$ in a neighborhood of $\overline{Y}\cap(D\times\Boxbar^1)$, we see that
\[
	[(\Fil^l\mcal{F}/\Fil^{l+1}\mcal{F},\theta,\Fil^l\mcal{F}/\Fil^{l+1}\mcal{F})] = 0
\]
for all $l\ge 0$.
Hence, we are reduced to showing that $Rq_*[(\Fil^l\mcal{F},\theta,\Fil^l\mcal{F})]=0$ for some $l\ge 0$.
We show that $\theta$ acts on $Li^*Rq_*\Fil^l\mcal{F}$ as the identity for sufficiently large $l$.

Take an affine open neighborhood $U$ of $D$ in $X$ such that $\overline{Y}_U:=\overline{Y}\times_XU$ misses $X\times\{0,1\}$ and that the restriction $q|_{\overline{Y}_U}\colon \overline{Y}_U\to U$ is finite.
We set some notation:
\[
	A=\mcal{O}(U) \qquad A/I=\mcal{O}(D) \qquad A[1/t]/J=\mcal{O}(\overline{Y}_U) \qquad M=\mcal{F}(\overline{Y}_U)
\]
The filtration $\Fil^*\mcal{F}$ on $\mcal{F}$ descends to a filtration $\Fil^*M$ on $M$, which is identified with the $(I,1/t)$-adic filtration.
Observe that $Li^*Rq_*\Fil^l\mcal{F} = (Li^*\Fil^lM)^{\sim}$.

We claim that there exists $n\ge 0$ such that $\Fil^{l+1}M=I\Fil^lM$ and $H_k(Li^*\Fil^lM)=0$ for all $l\ge n$ and $k>0$.
If we admit the claim, then
\[
	Li^*\Fil^lM = \Fil^lM/I\Fil^lM = \Fil^lM/\Fil^{l+1}M
\]
on which we know that $\theta$ acts as the identity. 
The claim is a local question on $\Spec A$, and thus we may assume that $I$ is principal, $I=(f)$ with $0\ne f\in A$.
By the modulus condition, we have a relation in $A[1/t]/J$ of the form
\[
	\frac{1}{t^n} + g_1\frac{1}{t^{n-1}} + \dotsb + g_{n-1}\frac{1}{t} + g_n = 0
\]
for some $n\ge 0$ and $g_k\in f^kA[1/t]/J$ with $1\le k\le n$.
Repeated application of this relation gives
\[
	\Fil^lM = f^lM + \frac{f^{l-1}}{t}M + \dotsb + \frac{f^{l-(n-1)}}{t^{n-1}}M
\]
for $l\ge n$.
In particular, $\Fil^{l+1}M=f\Fil^lM$.
Furthermore, since the $f$-power torsion of $M$ has a bounded exponent, $\Fil^lM=f^{l-n}\Fil^nM$ has no $f$-power torsion for $l\gg n$.
This proves the claim.
\end{proof}

\begin{corollary}\label{cor:rigidity}
Under the situation in Theorem \ref{thm:K-theory}, $\epsilon\circ(\iota_0^*-\iota_1^*)=0$.
\end{corollary}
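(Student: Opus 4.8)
The plan is to reduce the Corollary to the rigidity Lemma \ref{lem:rigidity} by lifting classes from the open part $X\times\Box^1$ to its compactification $X\times\Boxbar^1$. Fix $Y\in\mcal{S}(X|D,1)$ with closure $\overline{Y}$ in $X\times\Boxbar^1$ and an arbitrary class $\xi\in K_0^Y(X\times\Box^1)$; it suffices to show $\epsilon(\iota_0^*\xi-\iota_1^*\xi)=0$ in $K_0(X,D)$. Restriction along the open immersion $X\times\Box^1\hookrightarrow X\times\Boxbar^1$ (whose closed complement is $X\times\{\infty\}$) yields a map
\[
	r\colon K_0^{\overline{Y}}(X\times\Boxbar^1) \to K_0^{Y}(X\times\Box^1),
\]
using $\overline{Y}\cap(X\times\Box^1)=Y$, and the first thing to establish is that $r$ is surjective.

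For surjectivity I would invoke the localization theorem \cite[7.4]{TT90} with supports in $\overline{Y}$, giving the exact sequence
\[
	K_0^{\overline{Y}}(X\times\Boxbar^1)\xrightarrow{r} K_0^{Y}(X\times\Box^1) \to K_{-1}^{\overline{Y}\cap(X\times\{\infty\})}(X\times\Boxbar^1).
\]
Since $X$ is regular, so is $X\times\Boxbar^1$, and for regular schemes $K$-theory with supports agrees with the $G$-theory of the support, which is connective; hence the rightmost group vanishes and $r$ is surjective. I would then choose $\overline{\xi}\in K_0^{\overline{Y}}(X\times\Boxbar^1)$ with $r(\overline{\xi})=\xi$. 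I expect this surjectivity to be the main obstacle: it is exactly the point where one must extend a class across the boundary $X\times\{\infty\}$, and it relies crucially on the regularity hypothesis (vanishing of negative $K$-theory with supports).

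It then remains to match the two incarnations of $\iota_a^*$. Because the closed immersion $\iota_a\colon X\hookrightarrow X\times\Boxbar^1$ for $a\in\{0,1\}$ factors through the open $X\times\Box^1$, the derived pullback $L\iota_a^*\overline{\xi}$ coincides with $L\iota_a^*\xi$ and is supported on $\overline{Y}\cap(X\times\{a\})$, which misses $D$ by the modulus condition. Unwinding the triple presentation of $K_0(X,D)$ from \cite[Theorem 3.1]{Iw19}, the class $[(L\iota_a^*(-),0,0)]$ used in Lemma \ref{lem:rigidity} is precisely the image under $\epsilon$ of $\iota_a^*\xi$; that is, the map $\iota_a^*$ of Lemma \ref{lem:rigidity} equals $\epsilon\circ\iota_a^*\circ r$ for the $\iota_a^*$ of the present theorem. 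Lemma \ref{lem:rigidity} gives $\iota_0^*\overline{\xi}=\iota_1^*\overline{\xi}$ in $K_0(X,D)$, whence $\epsilon(\iota_0^*\xi-\iota_1^*\xi)=\iota_0^*\overline{\xi}-\iota_1^*\overline{\xi}=0$; as $\xi$ and $Y$ were arbitrary, $\epsilon\circ(\iota_0^*-\iota_1^*)=0$ on the colimit.
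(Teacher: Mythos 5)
Your proposal is correct and follows the paper's own proof: the paper likewise deduces the corollary from Lemma \ref{lem:rigidity} by noting that regularity of $X$ makes the restriction $K_0^{\overline{Y}}(X\times\Boxbar^1)\to K_0^Y(X\times\Box^1)$ surjective. You merely spell out the justification (localization with supports plus vanishing of negative $K$-theory with supports on a regular scheme) and the compatibility of the two incarnations of $\iota_a^*$, both of which the paper leaves implicit.
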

\begin{proof}
Since $X$ is regular, the restriction morphism
\[
	K_0^{\overline{Y}}(X\times\Boxbar^1) \to  K_0^Y(X\times\Box^1)
\]
is surjective, where $Y\in\mcal{S}(X|D,1)$ and $\overline{Y}$ is the closure of $Y$ in $X\times\Boxbar^1$.
Hence, the result follows from Lemma \ref{lem:rigidity}.
\end{proof}

\subsection*{End of the proof}

\begin{lemma}\label{lem:K_1}
Let $X$ be a scheme and $D$ an effective Cartier divisor on $X$ admitting an affine open neighoborhood in $X$.
Suppose we are given $Z\in\mcal{S}(X|D)$ and $\alpha\in K_1(X\setminus Z)$ whose restriction to $K_1(D)$ is zero.
Then there exist $W\in\mcal{S}(X|D,1)$ and $\beta\in K_1(X\times\Box^1\setminus W)$ such that
\[
	\beta\vert_{t=0}-\beta\vert_{t=1}=\alpha \quad \text{in} \quad \colim_{Y\in\mcal{S}(X|D)}K_1(X\setminus Y).
\]
\end{lemma}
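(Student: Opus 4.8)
The plan is to reduce to an affine situation where $\alpha$ is represented by a single invertible matrix, to arrange that this matrix is congruent to the identity modulo the ideal of $D$, and then to write down an explicit linear homotopy whose degeneracy locus satisfies the modulus condition for purely formal reasons.

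\emph{Reduction step.} Since the asserted identity lives in $\colim_Y K_1(X\setminus Y)$, I am free to enlarge $Z$ to any closed subset disjoint from $D$, and hence to replace $X$ by a small affine neighbourhood of $D$. Let $U=\Spec A$ be an affine open neighbourhood of $D$ and let $I\subseteq A$ be the ideal of $D$; as the modulus condition is local I may localize further and assume $I=(f)$ is principal. Because $Z\cap U$ is closed and disjoint from $V(I)$, there is $h\in A$ with $V(h)\supseteq Z\cap U$ and $h\equiv 1\bmod I$, so that $B:=A[1/h]$ defines an affine open $\Spec B\subseteq X\setminus Z$ still containing $D$. Enlarging $Z$ to $X\setminus\Spec B$, I reduce to $X=\Spec B$ and $\alpha\in K_1(B)$, which I represent by a matrix $g\in\GL_N(B)$. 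The hypothesis $\alpha\vert_D=0$ means $[g\bmod I]=0$ in $K_1(B/I)$, so after stabilizing $g\bmod I$ lies in the elementary subgroup $E_N(B/I)$; lifting the elementary generators along $B\twoheadrightarrow B/I$ and multiplying $g$ by the inverse lift, I may assume (without changing $\alpha$) that $g\equiv 1\bmod I$.

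\emph{Construction and endpoints.} I then set $G(t):=1+(1-t)(g-1)$, a matrix over $B[t]$ with $G(0)=g$ and $G(1)=1$, and let $\beta:=[G]\in K_1(X\times\Box^1\setminus W)$, where $W$ is the union of $(X\setminus\Spec B)\times\Box^1$ with the vanishing locus $V(\det G)$. Since $\det g$ and $\det G(1)=1$ are units, the fibres $W\cap(X\times\{0,1\})$ are contained in $X\setminus\Spec B$ and hence miss $D$; thus $\beta\vert_{t=0}=[g]=\alpha$ and $\beta\vert_{t=1}=[1]=0$ in $\colim_Y K_1(X\setminus Y)$, which is the required equality. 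It remains to verify $W\in\mcal{S}(X|D,1)$. The factor $(X\setminus\Spec B)\times\Box^1$ is disjoint from $D\times\Box^1$ and satisfies the modulus condition trivially, and a finite union satisfies it as soon as each irreducible component does, so I focus on $V(\det G)$. Writing $H:=g-1\in M_N(I)$, the coefficient of $(1-t)^k$ in $\det G=\sum_k\sigma_k(H)(1-t)^k$ is a sum of $k\times k$ minors of $H$ and hence lies in $I^k=(f^k)$, say $\sigma_k(H)=f^k b_k$. On $V(\det G)$ the relation $1+\sum_{k\ge 1}b_k\,(f(1-t))^k=0$ shows that $z:=1/(f(1-t))$ satisfies a monic equation $z^N+b_1z^{N-1}+\dotsb+b_N=0$ over the coordinate ring of $\overline{V(\det G)}\cap\Spec B[1/t]$, which is exactly the integrality criterion for the modulus condition established in the proof of Lemma \ref{lem:containment} (note that $1/(1-t)$ and $1/t$ are both uniformizers at $t=\infty$).

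I expect the genuine obstacle to be the reduction step, and specifically the passage from the homological hypothesis $\alpha\vert_D=0$ in $K_1(D)$ to the concrete normalization $g\equiv 1\bmod I$: this is where the hypothesis is actually consumed, and it requires both representability of the class by a matrix over the affine model and a lift of elementary matrices along $B\twoheadrightarrow B/I$. Once $g\equiv 1\bmod I$ is in force, the minor estimate $\sigma_k(H)\in I^k$ makes the modulus condition automatic, so the remaining work is bookkeeping about supports and the colimit.
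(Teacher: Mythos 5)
Your proof is correct and follows essentially the same route as the paper: reduce to an affine neighbourhood of $D$, use the vanishing in $K_1(D)$ to correct the matrix by a lifted elementary matrix so that it becomes congruent to the identity modulo $I$, and take the linear homotopy $1+(1-t)(g-1)$, whose determinant locus carries the modulus condition. The only difference is that where the paper invokes Binda--Saito's admissible polynomials to verify the modulus condition for $V(\det G)$, you check it directly from the estimate $\sigma_k(g-1)\in I^k$ together with the integrality criterion from the proof of Lemma \ref{lem:containment}, which is a self-contained and correct substitute.
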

\begin{proof}
By enlarging $Z$ if necessarily, we may assume that $X\setminus Z$ is affine.
Set $U=X\setminus Z$.
Take a representative of $\alpha$ in $\GL_n(U)$, which we also denote by $\alpha$.
By our assumption, the restriction of $\alpha$ to matrices over $D$ is in the group $E_m(D)$ of elementary matrices for some $m\ge n$.
Take a lift $\alpha'\in E_m(U)$ of $\alpha\vert_D$.
Then $\alpha = \alpha' + \epsilon$ in $\GL_m(U)$, where $\epsilon$ is a matrix whose entries are all in the ideal $I$ defining $D$.
We define an $(m\times m)$-matrix over $U\times\Box^1$ by
\[
	\alpha(t) := \alpha' + (1-t)\epsilon.
\]
Then the determinant $\det(\alpha(t))$ is an admissible polynomial for $D$ in the sense \cite[\S4]{BS17}, and thus its zero locus $W=V(\det(\alpha(t)))$ satisfies the modulus condition along $D$.
Lastly, by definition, $\alpha(t)$ gives an element $\beta$ in $K_1(X\times\Box^1\setminus W)$ and it satisfies the desired formula.
\end{proof}

\begin{proof}[Proof of Theorem \ref{thm:K-theory}]
By Lemma \ref{lem:surjectivity} and Corollary \ref{cor:rigidity}, it remains to show the exactness at the middle term.
Suppose we are given $Z\in\mcal{S}(X|D)$ and $\alpha\in K_0^Z(X)$ which dies in $K_0(X,D)$ along the obvious morphism.
Watch the commutative diagram with exact rows
\[
\xymatrix{
	K_1(X\setminus Z,D) \ar[r] \ar@{=}[d] & K_1(X\setminus Z) \ar[r] \ar[d] & K_1(D) \ar[d] \\
	K_1(X\setminus Z,D) \ar[r] & K_0^Z(X) \ar[r] & K_0(X,D).
}
\]
It follows that there exists a lift $\alpha'\in K_1(X\setminus Z)$ of $\alpha$ along the boundary morphism whose restriction to $K_1(D)$ is zero.
Hence, by Lemma \ref{lem:K_1}, we find an element $\beta'$ in the group at the left upper corner of the commutative diagram
\[
\xymatrix{
	\displaystyle\colim_{Y\in\mcal{S}(X|D,1)}K_1(X\times\Box^1\setminus Y) \ar[r]^-{\iota_0^*-\iota_1^*} \ar[d]
		& \displaystyle\colim_{Y\in\mcal{S}(X|D)}K_1(X\setminus Y) \ar[d] & \\
	\displaystyle\colim_{Y\in\mcal{S}(X|D,1)}K_0^Y(X\times\Box^1) \ar[r]^-{\iota_0^*-\iota_1^*} 
		& \displaystyle\colim_{Y\in\mcal{S}(X|D)}K_0^Y(X) \ar[r]^-{\epsilon} & K_0(X,D)
}
\]
such that $\iota_0^*\beta'-\iota_1^*\beta'=\alpha'$ in the upper middle group.
This proves the exactness of the lower sequence.
\end{proof}

\section{Adams decomposition}\label{adams}

\begin{notation}\label{not:w_i}
For $i\ge 0$, we set
\[
	w_i = 
	\begin{cases} 
		1 & \text{if $i$ is zero} \\
		2 & \text{if $i$ is odd} \\
		\text{denominator of }|B_{i/2}|/2i & \text{if $i$ is positive even}
	\end{cases}
\]
where $B_n$ denotes the $n$-th Bernoulli number.
\end{notation}

\begin{lemma}
\leavevmode
\begin{enumerate}[label={\upshape(\roman*)}]
\item If a prime $p$ divides $w_i$, then $(p-1)$ divides $i$.
The converse is true if $i$ is even.
\item For $i\ge 1$ and for $N$ large enough than $i$,
\[
	w_i = \gcd_{k\ge 2}k^N(k^i-1).
\]
\end{enumerate}
\end{lemma}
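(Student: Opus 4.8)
The plan is to establish (i) and (ii) simultaneously by computing the $p$-adic valuation $v_p$ of both sides at every prime $p$. The cases $i = 0$ (where $w_0 = 1$) and $i$ odd (where $w_i = 2$, so that $p \mid w_i$ forces $p = 2$ and then $(p-1) = 1 \mid i$) are immediate, so I would assume $i \ge 2$ even. Writing $R_i := \gcd_{k \ge 2} k^N(k^i - 1)$, the identity (ii) amounts to $v_p(R_i) = v_p(w_i)$ for all $p$, and (i) will drop out once we see which primes occur with positive valuation.

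For the left-hand side I would compute $v_p(R_i) = \min_{k \ge 2} v_p\bigl(k^N(k^i-1)\bigr)$ by separating $k$ according to divisibility by $p$. If $p \mid k$, then $p \nmid k^i - 1$ while $v_p(k^N) \ge N$, so these $k$ contribute at least $N$; choosing $N$ larger than the (finite, $i$-bounded) answer makes them irrelevant. For $p \nmid k$ the factor $k^N$ disappears and we are reduced to $m_p := \min_{p \nmid k} v_p(k^i - 1)$. Setting $d = \mathrm{ord}_p(k) \mid p-1$, one has $v_p(k^i - 1) = 0$ unless $d \mid i$; hence if $(p-1) \nmid i$ a primitive root modulo $p$ gives $m_p = 0$, while if $(p-1) \mid i$ every admissible $d$ divides $i$ and the lifting-the-exponent lemma yields, for odd $p$, $v_p(k^i - 1) = v_p(k^d - 1) + v_p(i)$, minimized to $m_p = 1 + v_p(i)$ by a primitive root modulo $p^2$. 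For $p = 2$ I would instead invoke the dyadic form
\[
	v_2(k^i - 1) = v_2(k-1) + v_2(k+1) + v_2(i) - 1
\]
(valid for odd $k$ and even $i$) together with $\min_{k \text{ odd}}\bigl(v_2(k-1)+v_2(k+1)\bigr) = 3$, giving $m_2 = 2 + v_2(i)$.

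For the right-hand side I would turn to the classical evaluation of the denominator of $|B_i|/(2i)$. By the theorem of von Staudt--Clausen the denominator of $B_i$ is the squarefree product $\prod_{(p-1)\mid i} p$, so $v_p(B_i) = -1$ exactly when $(p-1)\mid i$; and by the $p$-integrality of the divided Bernoulli number $B_i/i$ for $(p-1)\nmid i$ (a standard consequence of the Kummer congruences) one has $v_p(B_i) \ge v_p(i)$ otherwise. Combining these with $v_p(w_i) = \max\bigl(0,\,-v_p(B_i) + v_p(2) + v_p(i)\bigr)$ gives, for odd $p$, the value $1 + v_p(i)$ when $(p-1)\mid i$ and $0$ otherwise, and for $p = 2$ (where $(p-1)=1$ always divides $i$, so $v_2(B_i) = -1$) the value $2 + v_2(i)$. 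These coincide term by term with the $m_p$ of the previous step, proving (ii); moreover the primes occurring with $v_p(w_i) > 0$ are exactly those with $(p-1)\mid i$, which is precisely (i) for even $i$.

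The step I expect to be the main obstacle is the bookkeeping at $p = 2$: the dyadic lifting-the-exponent formula carries the anomalous $-1$ and requires the minimization of $v_2(k-1)+v_2(k+1)$ (attained at $k \equiv 3 \pmod 8$), while on the arithmetic side the factor $2$ hidden in $2i$ must be combined with $v_2(B_i) = -1$ so that the two independent computations both return $2 + v_2(i)$. A minor additional point is to fix a single $N$, depending only on $i$, that works uniformly across all primes; any $N$ exceeding $\max_p v_p(w_i)$ suffices, and this maximum is bounded in terms of $i$ since only the finitely many primes with $p - 1 \le i$ can divide $w_i$.
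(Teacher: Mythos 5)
Your argument is correct in substance, and it is worth noting at the outset that the paper does not actually prove this lemma: its ``proof'' is the single citation to Milnor--Stasheff, Appendix B. What you have written is essentially a self-contained reconstruction of the argument found there, comparing $p$-adic valuations on both sides: the denominator of $|B_{i/2}|/2i$ via von Staudt--Clausen plus the $p$-integrality of the divided Bernoulli number when $(p-1)\nmid i$, against $\min_{p\nmid k}v_p(k^i-1)$ computed through the order of $k$ and lifting the exponent, with a primitive root mod $p^2$ (resp.\ $k\equiv 3\pmod 8$) realizing the minimum; your handling of the uniform choice of $N$ is also fine. Two small points should be tightened. First, the paper's $B_{i/2}$ is the Bernoulli number in the Milnor--Stasheff indexing, which equals $|B_i|$ in the modern convention you silently switch to when you state von Staudt--Clausen as ``the denominator of $B_i$ is $\prod_{(p-1)\mid i}p$''; the identification is harmless but should be made explicit, since otherwise the valuations you compute are attached to the wrong number. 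Second, part (ii) is asserted for all $i\ge 1$, including odd $i$, and your reduction ``assume $i\ge 2$ even'' only disposes of the odd case for part (i); for (ii) you still owe the identity $\gcd_{k\ge 2}k^N(k^i-1)=2$ for odd $i$. This does follow instantly from your own machinery --- for odd $k$ and odd $i$ one has $v_2(k^i-1)=v_2(k-1)$ because the cofactor $k^{i-1}+\dotsb+1$ is a sum of $i$ odd terms, so $m_2=1$, while $m_p=0$ for odd $p$ since $p-1$ is even and cannot divide $i$ --- but it is not covered by the parenthetical you give, which only addresses (i).
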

\begin{proof}
See \cite[Appendix B]{MS74}.
\end{proof}

\begin{definition}
Let $N$ be a positive integer.
We say that a morphism $f\colon A\to B$ of abelian groups is an \textit{$N$-monomorphism} (resp.\ \textit{$N$-epimorphism}) if the kernel (resp.\ cokernel) of $f$ is killed by $N$.
We say that $f$ is an \textit{$N$-isomorphism} if it is an $N$-monomorphism and an $N$-epimorphism.
\end{definition}

\begin{proposition}\label{prop:adams}
Let $A$ be an abelian group equipped with endomorphisms $\psi^k$ for $k>0$ which commute with each other.
Suppose that there is a finite filtration
\[
	A=F^i \supset F^{i+1} \supset \cdots \supset F^j \supset F^{j+1}=0
\]
with $0\le i\le j$ consisting of subgroups preserved by $\psi^k$ such that $\psi^k$ acts on $F^p/F^{p+1}$ by the multiplication by $k^p$.
For $p\ge 0$, let $A^{(p)}$ be the subgroup of $A$ consisting of elements $x\in A$ such that $\psi^kx=k^px$ for all $k>0$.
Then:
\begin{enumerate}[label={\upshape(\roman*)}]
\item For $i\le p\le j$, $(\prod_{q=i}^{p-1}w_{p-q})A^{(p)}$ is in $F^p$ and the induced morphism
\[
	\prod_{q=i}^{p-1}w_{p-q}\colon A^{(p)}\to F^p/F^{p+1}
\]
is a $(\prod_{q=i}^jw_{|p-q|})$-isomorphism.
\item The canonical morphism
\[
	\bigoplus_{p=i}^j A^{(p)} \to A
\]
is a $(\prod_{i\le q,p\le j}w_{|p-q|})$-isomorphism.
\end{enumerate}
\end{proposition}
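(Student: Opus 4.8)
The plan is to exploit the triangular action of the operators $\psi^k$ on the filtration $F^\bullet$ together with the arithmetic identity $w_i=\gcd_{k\ge2}k^N(k^i-1)$ (valid for $N$ large) established above; along the way I will repeatedly use that $\gcd_{k\ge2}k^M(k^d-1)$ increases with $M$ to its stable value $w_d$, so that any such gcd divides $w_d$. I would prove (i) first and then bootstrap (ii) from it by a descending induction along the filtration.

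The two easy halves of (i) --- the containment $(\prod_{q=i}^{p-1}w_{p-q})A^{(p)}\subset F^p$ and the injectivity of the induced map --- I would handle by a single ``pushing down'' mechanism. If $x\in A^{(p)}$ happens to lie in $F^r$ with $r\ne p$, then its image $\bar x$ in $F^r/F^{r+1}$ satisfies both $\psi^k\bar x=k^r\bar x$ (filtration) and $\psi^k\bar x=k^p\bar x$ (membership in $A^{(p)}$), so $k^{\min(p,r)}(k^{|p-r|}-1)\bar x=0$ for every $k\ge2$; as this gcd divides $w_{|p-r|}$, one gets $w_{|p-r|}\bar x=0$, i.e. $w_{|p-r|}x\in F^{r+1}$. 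Starting from $x\in F^i=A$ and iterating upward to $F^p$ yields the containment; starting instead from an element of $A^{(p)}\cap F^{p+1}$ and iterating down to $F^{j+1}=0$ yields the injectivity bound. In each case the accumulated product of $w$'s is exactly (a divisor of) the asserted constant.

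I expect the surjectivity in (i) to be the main obstacle, since it requires manufacturing genuine \emph{simultaneous} eigenvectors out of an arbitrary lift $y\in F^p$ of a class in $F^p/F^{p+1}$. My plan is successive approximation: call $x$ an eigenvector mod $F^r$ if $\psi^kx-k^px\in F^r$ for all $k$, so that $y$ is trivially an eigenvector mod $F^{p+1}$. To improve the approximation by one filtration step I must correct the defects $\delta_k:=\psi^kx-k^px\in F^r$ by some $c\in F^r$ with $(\psi^k-k^p)c\equiv\delta_k\pmod{F^{r+1}}$, i.e. solve $k^p(k^{r-p}-1)\bar c=\bar\delta_k$ in $F^r/F^{r+1}$ \emph{simultaneously in $k$}. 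The commutation $\psi^k\psi^l=\psi^l\psi^k$ forces the compatibility $l^p(l^{r-p}-1)\bar\delta_k=k^p(k^{r-p}-1)\bar\delta_l$, and a Bezout combination realizing $d_r:=\gcd_{k\ge2}k^p(k^{r-p}-1)$ then produces a $\bar c$ solving the equation after multiplying through by $d_r$; this is the precise point where the factor $w_{r-p}$, which $d_r$ divides, enters. Iterating from $r=p+1$ up to $r=j$ lands the defect in $F^{j+1}=0$ and delivers a true eigenvector $x\in A^{(p)}\cap F^p$ congruent to $(\prod_{r>p}d_r)\,y$ modulo $F^{p+1}$; multiplying by the complementary factor shows the cokernel is killed by $\prod_{q}w_{|p-q|}$.

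Finally, for (ii) I would note $\prod_{i\le p,q\le j}w_{|p-q|}=\prod_pW_p$, where $W_p=\prod_{q}w_{|p-q|}$ is the constant from (i). Surjectivity of $\bigoplus_pA^{(p)}\to A$ up to this constant follows by descending induction on the filtration: given $a\in F^p$, part (i) provides $x_p\in A^{(p)}$ with $c_px_p\equiv W_pa\pmod{F^{p+1}}$, and applying the inductive hypothesis to $W_pa-c_px_p\in F^{p+1}$ clears one more layer. For injectivity, given $\sum_px_p=0$ with $x_p\in A^{(p)}$, I would apply the operators $\prod_{q\ne p}(\psi^{k_q}-k_q^{q})$ with \emph{independent} parameters $k_q$; these annihilate every $x_{p'}$ with $p'\ne p$ and multiply $x_p$ by $\prod_{q\ne p}(k_q^p-k_q^{q})$, so an iterated Bezout argument in the independent variables $k_q$ shows $x_p$ is killed by $\prod_{q\ne p}\gcd_{k\ge2}(k^p-k^q)$, a divisor of $W_p$. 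Combining the two bounds gives the stated $W$-isomorphism.
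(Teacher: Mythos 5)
Your proposal is correct; every step checks out, including the compatibility relation $l^p(l^{r-p}-1)\bar\delta_k=k^p(k^{r-p}-1)\bar\delta_l$ that you derive from $\psi^k\psi^l=\psi^l\psi^k$, and the accumulated constants all divide the ones asserted in the statement. The underlying engine is the same as the paper's (which follows Soul\'e): a Bezout representation $w_{|p-q|}=\sum_k A_{pqk}(k^p-k^q)$ combined with the fact that $\psi^k-k^q$ raises the filtration at level $q$. Your treatment of the containment and of the injectivity in (i) is literally the paper's operator $\Phi_p=\prod_{q<p}\bigl(\sum_k A_{pqk}(\psi^k-k^q)\bigr)$ read off graded piece by graded piece. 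Where you genuinely diverge is the surjectivity in (i): the paper constructs a single operator $\Psi_p=\prod_{q>p}\bigl(\sum_k A_{pqk}(\psi^k-k^q)\bigr)$ which maps $F^p$ into $A^{(p)}$, kills $F^{p+1}$, and is multiplication by $\prod_{q>p}w_{q-p}$ modulo $F^{p+1}$ --- a one-shot quasi-section --- whereas you build the eigenvector by successive approximation, correcting the defect one filtration step at a time. Your version makes the obstruction-theoretic content explicit (and shows exactly where commutativity of the $\psi^k$ is used), at the cost of being longer; the paper's version packages the same Bezout identities into one operator and gets the cokernel and kernel bounds simultaneously from $\Psi_p\circ(\text{incl})$ and $(\text{incl})\circ\Psi_p$. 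For (ii) the paper runs a diagram-chase induction on the filtration length using (i), while you prove injectivity directly with the independent-parameter operators $\prod_{q\ne p}(\psi^{k_q}-k_q^q)$ and an iterated gcd argument; your bound (each component of a kernel element is killed by $\prod_q w_{|p-q|}$) is in fact slightly sharper than the product over all pairs.
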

\begin{proof}
We follow \cite[2.8]{So85}.
Take integers $A_{pqk}$ ($p\ne q$) so that
\[
	w_{|p-q|}=\sum A_{pqk}(k^p-k^q).
\]
We fix $i\le p\le j$.
The morphism
\[
	\Phi_p:=\prod_{q=i}^{p-1}\Bigl(\sum_{k>1}A_{pqk}(\psi^k-k^q)\Bigr)
\]
sends $A$ to $F^p$, and on $A^{(p)}$ it is the multiplication by $\prod_{q=i}^{p-1}w_{p-q}$.
On the other hand, the morphism
\[
	\Psi_p:=\prod_{q=p+1}^j\Bigl(\sum_{k>1}A_{pqk}(\psi^k-k^q)\Bigr)
\]
sends $F^p$ to $A^{(p)}$, and modulo $F^{p+1}$ it is the multiplication by $\prod_{q=p+1}^jw_{q-p}$.
Moreover, this morphism kills $F^{p+1}$.
To summarize, $(\prod_{q=i}^{p-1}w_{p-q})A^{(p)}$ is in $F^p$ and the induced morphism
\[
	\prod_{q=i}^{p-1}w_{p-q}\colon A^{(p)} \to F^p/F^{p+1}
\]
is a $(\prod_{q=i}^jw_{|p-q|})$-isomorphism.

Next, we prove (ii).
Let us consider the commutative diagram
\[
\xymatrix{
	0 \ar[r] & \displaystyle\bigoplus_{p=i+1}^q A^{(p)}\cap F^{i+1} \ar[r] \ar[d] & \displaystyle\bigoplus_{p=i}^q A^{(p)} \ar[r] \ar[d] 
		& \displaystyle\Bigl(\bigoplus_{p=i+1}^q A^{(p)}/A^{(p)}\cap F^{i+1}\Bigr)\oplus A^{(i)} \ar[d] \ar[r] & 0 \\
	0 \ar[r] & F^{i+1} \ar[r] & A \ar[r] & A/F^{i+1} \ar[r] & 0
}
\]
with exact rows.
The left vertical arrow is a $(\prod_{i+1\le p, q\le j}w_{|p-q|})$-isomorphism by induction.
Since $\Phi_{i+1}$ sends $A$ to $F^{i+1}$ and it is the multiplication by $w_{p-i}$ on $A^{(p)}$, $\bigoplus_{p=i+1}^q A^{(p)}/A^{(p)}\cap F^{i+1}$ is killed by $\prod_{p=i+1}^jw_{p-i}$.
Combining it with (i), we see that the right vertical arrow is a $(\prod_{p=i}^jw_{p-i})^2$-isomorphism.
Consequently, the middle vertical arrow is a $(\prod_{i\le p,q\le j}w_{|p-q|})$-isomorphism.
\end{proof}

\subsection*{Example I: $\gamma$-filtration}

We refer to \cite{AT69} for the definition of (non-unital) special $\lambda$-rings, $\gamma$-filtrations and Adams operations.

\begin{lemma}
Let $I$ be a non-unital special $\lambda$-ring.
Assume that the $\gamma$-filtration on $I$ is finite.
Then the $\gamma$-filtration on $I$ together with the Adams operations satisfies the condition of Proposition \ref{prop:adams}.
\end{lemma}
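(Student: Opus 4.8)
The plan is to verify, for the $\gamma$-filtration $I = F^1_\gamma \supseteq F^2_\gamma \supseteq \cdots \supseteq F^{j+1}_\gamma = 0$ (so that the index $i$ of Proposition \ref{prop:adams} is $1\ge 0$), the three conditions of that proposition: that the Adams operations $\psi^k$ ($k\ge 1$) are pairwise commuting additive endomorphisms, that each $F^p_\gamma$ is a subgroup stable under every $\psi^k$, and --- the only substantial point --- that $\psi^k$ acts on $\operatorname{gr}^p_\gamma := F^p_\gamma/F^{p+1}_\gamma$ as multiplication by $k^p$. Finiteness of the filtration is the standing hypothesis, and the first condition is immediate from the general theory of special $\lambda$-rings in \cite{AT69}: each $\psi^k$ is a ring homomorphism (in particular additive), and $\psi^k\circ\psi^l=\psi^{kl}=\psi^l\circ\psi^k$. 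To keep line elements available I would first pass to the augmented ring $\tilde{R}=\mbb{Z}\oplus I$, a unital special $\lambda$-ring whose augmentation ideal is $I=F^1_\gamma(\tilde{R})$, so that the $\gamma$-operations, Adams operations and filtration on $I$ are restrictions of those on $\tilde{R}$.

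For the key reduction I would use that $F^p_\gamma$ is generated as a subgroup by the monomials $\gamma^{i_1}(x_1)\cdots\gamma^{i_r}(x_r)$ with $x_s\in F^1_\gamma$ and $\sum_s i_s\ge p$, those with $\sum_s i_s=p$ already generating $\operatorname{gr}^p_\gamma$. Since each $\psi^k$ is multiplicative, both the stability $\psi^k(F^p_\gamma)\subseteq F^p_\gamma$ and the formula $\psi^k\equiv k^p$ on $\operatorname{gr}^p_\gamma$ then reduce, via the estimate $F^a_\gamma\cdot F^b_\gamma\subseteq F^{a+b}_\gamma$ applied to error terms in a product, to the single congruence
\[
	\psi^k(\gamma^i(x)) \equiv k^i\gamma^i(x) \pmod{F^{i+1}_\gamma}\quad (x\in F^1_\gamma,\ i\ge 1).
\]
Indeed, writing $\psi^k(\gamma^{i_s}(x_s))=k^{i_s}\gamma^{i_s}(x_s)+\rho_s$ with $\rho_s\in F^{i_s+1}_\gamma$, expanding a monomial of total degree $p$ yields the main term $k^p\prod_s\gamma^{i_s}(x_s)$, while every cross term acquires a factor $\rho_s$ and hence lies in $F^{p+1}_\gamma$; the same display also shows $\psi^k(\gamma^i(x))\in F^i_\gamma$, giving stability on generators and therefore on all of $F^p_\gamma$.

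Finally I would prove the displayed congruence by the splitting principle for special $\lambda$-rings: the difference $\psi^k(\gamma^i(x))-k^i\gamma^i(x)$ is a universal expression in the $\gamma$-operations of $x$, so it suffices to check, in a $\lambda$-extension in which $x$ splits, that it lands in $F^{i+1}_\gamma$ for $x=\sum_{a=1}^{n}(\ell_a-1)$ with the $\ell_a$ line elements. Writing $x_a=\ell_a-1\in F^1_\gamma$, the identity $\gamma_t(\ell-1)=1+(\ell-1)t$ gives $\gamma^i(x)=e_i(x_1,\dots,x_n)$, the $i$-th elementary symmetric polynomial, while $\psi^k(x_a)=\ell_a^k-1=(1+x_a)^k-1\equiv kx_a\pmod{F^2_\gamma}$. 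Applying the ring homomorphism $\psi^k$ to $e_i$ and again discarding products that pick up a factor in $F^2_\gamma$ (hence land in $F^{i+1}_\gamma$) yields $\psi^k(\gamma^i(x))=e_i(\psi^kx_1,\dots,\psi^kx_n)\equiv k^i e_i(x_1,\dots,x_n)=k^i\gamma^i(x)$ modulo $F^{i+1}_\gamma$, as desired. The main obstacle I anticipate is the careful justification that the splitting principle legitimately reduces a congruence \emph{modulo the intrinsically defined filtration} $F^{i+1}_\gamma$ to the split case, together with the bookkeeping needed to remain in the non-unital setting; once the universal nature of these expressions is correctly set up, the symmetric-function computation is routine.
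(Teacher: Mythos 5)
Your argument is correct and is, in substance, the same as the paper's: the paper's entire proof is a citation of \cite[Proposition 5.3]{AT69}, and what you have written out is the standard proof of that cited result --- reduction, via multiplicativity of $\psi^k$ and $F^a_\gamma\cdot F^b_\gamma\subseteq F^{a+b}_\gamma$, to the congruence $\psi^k\gamma^i(x)\equiv k^i\gamma^i(x)\pmod{F^{i+1}_\gamma}$ for $x\in F^1_\gamma$, verified by the splitting principle. The one delicate point you rightly flag (that the splitting principle must be packaged as a universal polynomial identity expressing $\psi^k\gamma^i(x)-k^i\gamma^i(x)$ in terms of $\gamma^j(x)$'s of total $\gamma$-weight $\ge i+1$, so that the conclusion lands in the intrinsically defined $F^{i+1}_\gamma$ of the original non-unital ring) is exactly how Atiyah--Tall handle it, so nothing is missing.
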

\begin{proof}
This follows from [loc.\ cit., Proposition 5.3].
\end{proof}

\begin{definition}
Let $X$ be a scheme.
We define
\[
	\widetilde{K}(X) := \mbb{H}_{\mrm{Zar}}(X,B\GL^+)
\]
the global sections over $X$ of a Zariski-fibrant replacement of (a functorial model of) $B\GL^+$.
Let $Y$ be a closed subset of $X$ and $D$ a closed subscheme of $X$.
We define $\widetilde{K}^Y(X,D)$ to be the iterated homotopy fiber of the square
\[
\xymatrix{
	\widetilde{K}(X) \ar[r] \ar[d] & \widetilde{K}(D) \ar[d] \\
	\widetilde{K}(X\setminus Y) \ar[r] & \widetilde{K}(D\setminus Y).
}
\]
For a non-negative integer $n$, we write $\widetilde{K}_n^Y(X,D):=\pi_n\widetilde{K}^Y(X,D)$.
When $D=\varnothing$ (resp.\ $Y=X$), we write $\widetilde{K}^Y(X)$ (resp.\ $\widetilde{K}(X,D)$) for $\widetilde{K}^Y(X,D)$.
\end{definition}

\begin{lemma}\label{lem:lambda_relK}
Let $X$ be a scheme, $Y$ a closed subset of $X$ and $D$ a closed subscheme of $X$.
Then $\widetilde{K}^Y_n(X,D)$ is naturally a special $\lambda$-ring for each $n\ge 0$.
The first grading of the $\gamma$-filtration is
\[
	F^1_\gamma\widetilde{K}_n^Y(X,D)/F^2_\gamma\widetilde{K}_n^Y(X,D) \simeq
	\begin{cases}
	\mbb{H}^1_{\mrm{Zar},Y}((X,D),\mcal{O}^\times) & n=0 \\
	\mbb{H}^0_{\mrm{Zar},Y}((X,D),\mcal{O}^\times) & n=1 \\
	0 & n>1
	\end{cases}
\]
where $\mbb{H}^*_{\mrm{Zar},Y}((X,D),\mcal{O}^\times)$ denotes the homology of the iterated homotopy fiber of the square
\[
\xymatrix{
	\mbb{H}_{\mrm{Zar}}(X,\mcal{O}^\times) \ar[r] \ar[d] & \mbb{H}_{\mrm{Zar}}(D,\mcal{O}^\times) \ar[d] \\
	\mbb{H}_{\mrm{Zar}}(X\setminus Y,\mcal{O}^\times) \ar[r] & \mbb{H}_{\mrm{Zar}}(D\setminus Y,\mcal{O}^\times).
}
\]
\end{lemma}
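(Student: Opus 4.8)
The plan is to transport the special $\lambda$-ring structure from the space $B\GL^+$ to the iterated homotopy fiber, and then to isolate the first graded piece of the $\gamma$-filtration by means of the determinant. For the first assertion I would recall that $B\GL^+$ carries natural $\lambda$-operations $\lambda^i$, and hence Adams operations $\psi^k$, realized as self-maps functorial in the input; this is the spatial form of the theorem realizing higher algebraic $K$-theory as a special $\lambda$-ring, and it is the structure used in \cite{So85}. After sheafifying and passing to $\mbb{H}_{\mrm{Zar}}(-,B\GL^+)$, these become natural self-maps of the presheaf $\widetilde{K}(-)$, so they commute with each of the four structure maps of the square defining $\widetilde{K}^Y(X,D)$ and therefore descend to the iterated homotopy fiber. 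The multiplication is induced by the tensor product of bundles, which respects both the support along $Y$ and the relative condition along $D$; since $\widetilde{K}^Y(X,D)$ is an iterated augmentation ideal, it inherits a non-unital special $\lambda$-ring structure in the sense of \cite{AT69}. The $\lambda$-ring identities may be checked on homotopy groups, where they follow from the corresponding identities in each corner of the square, and naturality in $(X,Y,D)$ is automatic; this proves the first claim for every $n\ge 0$.

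For the computation of $F^1_\gamma/F^2_\gamma$ the key tool is the first Chern class, realized by the determinant $\det\colon\GL\to\mbb{G}_m$. It induces a map of pointed sheaves of spaces $B\GL^+\to B\mbb{G}_m$, natural in the scheme, hence after taking $\mbb{H}_{\mrm{Zar}}$ and forming iterated homotopy fibers a natural map
\[
	c_1\colon \widetilde{K}^Y(X,D) \to L^Y(X,D),
\]
where $L^Y(X,D)$ is the iterated homotopy fiber of the square of the spaces $\mbb{H}_{\mrm{Zar}}(-,B\mbb{G}_m)$. Because $B\mbb{G}_m$ is the Eilenberg--MacLane sheaf $K(\mcal{O}^\times,1)$, a descent computation identifies $\pi_n L^Y(X,D)$ with $\mbb{H}^{1-n}_{\mrm{Zar},Y}((X,D),\mcal{O}^\times)$, the shift by one accounting for the classifying-space direction; in particular this group vanishes for $n>1$, since the relevant bi-relative cohomology is concentrated in non-negative degrees. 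I would then show that on homotopy groups $c_1$ kills $F^2_\gamma$ and induces an isomorphism $F^1_\gamma/F^2_\gamma \xrightarrow{\,\sim\,} \pi_n L^Y(X,D)$, reading off the three cases from $n=0,1$ and the vanishing for $n>1$. Vanishing on $F^2_\gamma$ is the standard fact that the first Chern class annihilates products and higher $\gamma$-operations.

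For the isomorphism I would exploit the homotopy fibre of $\det$, namely the $\SL$-analogue $B\SL^+$, on which the determinant is trivial: its weight-one graded piece $F^1_\gamma/F^2_\gamma$ vanishes because $c_1$ is the only weight-one invariant and it is zero on $\SL$. Granting this, the fibre sequence $B\SL^+\to B\GL^+\to B\mbb{G}_m$ shows that $c_1$ is an isomorphism in weight one, while the entire homotopy of the $B\mbb{G}_m$-term is of weight one, yielding $F^1_\gamma/F^2_\gamma\widetilde{K}^Y_n(X,D)\cong\pi_n L^Y(X,D)$. The main obstacle I anticipate is precisely the compatibility of the graded piece $F^1_\gamma/F^2_\gamma$ with the formation of the iterated homotopy fiber: a priori the $\gamma$-filtration lives on each individual homotopy group and need not commute with the connecting maps of the four long exact sequences produced by the square. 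The device that resolves this is the determinant, which realizes the weight-one graded piece at the level of spaces—through the genuinely functorial object $L^Y(X,D)$—rather than merely on homotopy groups. Establishing that the $\SL$-part carries no weight-one contribution uniformly in the supported and doubly-relative setting, rather than only for the absolute groups $K_0$ and $K_1$ where the identifications with $\Pic$ and $\mcal{O}^\times$ are classical, is where the real care is needed.
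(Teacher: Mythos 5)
Your strategy is essentially the paper's: transport the special $\lambda$-structure to the iterated homotopy fiber (the paper simply cites Levine, Corollary 5.6, for this), then isolate $F^1_\gamma/F^2_\gamma$ via the determinant $B\GL^+\to B\mcal{O}^\times$ together with the vanishing of the weight-one piece of the $\SL$-part. The one step you flag as ``where the real care is needed''---that $F^1_\gamma/F^2_\gamma$ of the supported, doubly relative $SK$-groups vanishes---is indeed the crux, and your stated reason (``$c_1$ is the only weight-one invariant and it is zero on $\SL$'') is circular as written. The paper closes this gap with the identity $\gamma^1+\gamma^2+\dotsb=0$ on $SK_*^Y(X,D)=\mbb{H}^{-*}_{\mrm{Zar},Y}((X,D),B\SL^+)$ as in Soul\'e (p.~524): since $\gamma^1(x)=x$, every element satisfies $x=-\sum_{i\ge 2}\gamma^i(x)\in F^2_\gamma$, so $F^2_\gamma SK_*^Y(X,D)=SK_*^Y(X,D)$. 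This identity is available uniformly in the supported and relative setting precisely because the $\gamma$-operations are realized on the sheaf of spaces $B\SL^+$ before taking iterated homotopy fibers---the same functoriality device you describe---so your outline is sound once that computation is substituted for the circular clause.
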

\begin{proof}
Refer to \cite[Corollary 5.6]{Le97} for the fact $\widetilde{K}^Y_n(X,D)$ is a special $\lambda$-ring.
The first grading is calculated by the determinant $\det\colon B\GL^+\to B\mcal{O}^\times$.
Indeed, we have $\gamma^1+\gamma^2+\dotsb=0$ in $SK_*^Y(X,D)=\mbb{H}^{-*}_{\mrm{Zar},Y}((X,D),B\SL^+)$ as in \cite[p524]{So85}\footnotemark, and thus $F^2_\gamma SK_*^Y(X,D)=SK_*^Y(X,D)$.
\footnotetext{In \cite[Th\'eorem\`e 4]{So85}, it is asserted that $K_0(X)=H^0(X,\mbb{Z})\oplus \Pic(X)\oplus F^2_\gamma K_0(X)$, but it is not true.
The proof shows $\Gr^1_{\gamma}K_0(X)\simeq \Pic(X)$, but $\Pic(X)$ may not split.}
\end{proof}

\subsection*{Example II: coniveau filtration}

\begin{definition}\label{def:coniveau}
Let $X$ be a scheme and $Y$ a closed subset of $X$.
For each $p\ge 0$, we define
\[
	F^pK_0^Y(X) := \colim_Z\image(K_0^Z(X)\to K_0^Y(X))
\]
where $Z$ runs over all closed subset of $Y$ whose codimension in $X$ is greater or equal to $p$.
We call the filtration the \textit{coniveau filtration}.
We write $\Gr^pK_0^Y(X)$ for the $p$-th grading of the coniveau filtration.
\end{definition}

\begin{lemma}\label{lem:coniveau}
Let $X$ be a regular scheme of dimension $d$ and $Y$ a closed subset of $X$ of codimension $p$.
Then the coniveau filtration
\[
	K_0^Y(X)=F^p \supset F^{p+1} \supset \dotsb \supset F^d \supset F^{d+1}=0
\]
together with the Adams operations satisfies the condition of Proposition \ref{prop:adams}.
\end{lemma}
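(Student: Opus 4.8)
The plan is to verify, one at a time, the three hypotheses of Proposition \ref{prop:adams} for $A=K_0^Y(X)$: that the displayed filtration is finite with the stated range, that each $F^q$ is stable under every Adams operation $\psi^k$, and that $\psi^k$ induces multiplication by $k^q$ on $\Gr^q K_0^Y(X)$. The operations $\psi^k$ themselves come from the special $\lambda$-ring structure of Lemma \ref{lem:lambda_relK} (with $D=\varnothing$), and they commute as in any $\lambda$-ring. The range is immediate from Definition \ref{def:coniveau}: since $Y$ has codimension $p$, every closed $Z\subseteq Y$ has $\codim_X Z\ge p$, so the choice $Z=Y$ already gives $F^p=K_0^Y(X)$; and since $\dim X=d$, the only closed $Z$ with $\codim_X Z\ge d+1$ is empty, whence $F^{d+1}=0$ because $K_0^\varnothing(X)=0$.

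For stability I would use naturality of the $\psi^k$ with respect to the support maps. By definition $F^q K_0^Y(X)$ is the sum of the images of $K_0^Z(X)\to K_0^Y(X)$ over closed $Z\subseteq Y$ of codimension $\ge q$; since $\psi^k$ is defined compatibly on each $K_0^Z(X)$ and commutes with the map to $K_0^Y(X)$, it carries every such image into itself, so $\psi^k F^q\subseteq F^q$.

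The essential content is the action on $\Gr^q K_0^Y(X)$, and this is where the real work lies. Because $X$ is regular, Thomason--Trobaugh localization \cite[7.4]{TT90} and dévissage identify $K_0^Z(X)$ with $G$-theory, so $\Gr^q K_0^Y(X)$ is generated by the classes $[\mcal{O}_V]$ of integral closed $V\subseteq Y$ of codimension exactly $q$ (those of larger codimension lie in $F^{q+1}$). It therefore suffices to prove $\psi^k[\mcal{O}_V]\equiv k^q[\mcal{O}_V]\pmod{F^{q+1}}$. Working modulo $F^{q+1}$ lets me shrink to an open $U\subseteq X$ meeting $V$ in its generic point, since the kernel of $K_0^V(X)\to K_0^{V\cap U}(U)$ is supported in codimension $>q$; as $\mcal{O}_{X,\eta_V}$ is regular of dimension $q$, I may arrange that $V\cap U$ is cut out by a regular sequence $f_1,\dots,f_q$. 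By Tor-independence the intersection product of the $q$ divisor classes $[\mcal{O}_{\{f_i=0\}}]$ equals $[\mcal{O}_{V\cap U}]$, and since these divisors meet properly this product lies in $F^q$. Now $\psi^k$ is a ring homomorphism acting on each divisor class by multiplication by $k$ modulo $F^2$ (the base case $\psi^k(1-[L^{-1}])\equiv k(1-[L^{-1}])$ together with Lemma \ref{lem:lambda_relK}), so it acts on the product by $k^q$ modulo $F^{q+1}$; this is the graded Adams computation of Soulé \cite[\S2]{So85} and \cite{SGA6}, carried out here with supports. I expect the main obstacle to be precisely this last step: the reduction to the regularly embedded case and the bookkeeping of the Koszul factorization through the coniveau filtration (the proper-intersection input placing the product in $F^q$ and the base case on $\Gr^1$ with supports). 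Once these are secured, the value $k^q$ on $\Gr^q$ follows formally, and Proposition \ref{prop:adams} applies.
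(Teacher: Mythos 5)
Your overall strategy --- reduce to generators $[\mcal{O}_V]$ by d\'evissage, localize at the generic point of $V$ to obtain a Koszul presentation by a regular sequence, and compute $\psi^k$ on the resulting product of divisor classes --- is a genuinely different route from the paper's. The paper first treats the case where $Y$ is a closed point, where $j_*\colon K_0(Y)\to K_0^Y(X)$ is an isomorphism and Soul\'e's Riemann--Roch formula $\psi^k\circ j_*=k^d(j_*\circ\psi^k)$ \cite[Th\'eor\`eme 3]{So85} gives the action of $\psi^k$ at once; it then propagates this to general $Y$ by descending induction on $p$ together with the Gillet--Soul\'e exact sequence $0\to F^{p+1}K_0^Y(X)\to K_0^Y(X)\to\bigoplus_{y\in Y\cap X^{(p)}}K_0^y(X_y)\to 0$, which embeds $\Gr^pK_0^Y(X)$ into a sum of punctual groups over the local schemes $X_y$. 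Your preparatory steps (the range of the filtration, stability of each $F^q$ under $\psi^k$, generation of $\Gr^q$ by the classes $[\mcal{O}_V]$ with $\codim_X V=q$, and the passage to an open $U$ at the cost of an error lying in $F^{q+1}$) are all fine.

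The gap is in the final step. Knowing that $\psi^k$ acts by $k$ on each divisor class modulo $F^2$ does not let you conclude that it acts by $k^q$ on the product modulo $F^{q+1}$: that inference requires the coniveau filtration to be multiplicative, $F^a\cdot F^b\subseteq F^{a+b}$, which is exactly what is not available without a moving lemma (and is the reason \cite{GS87} pass to the $\gamma$-filtration). Concretely, the error term in $\psi^k(c_i)$, where $c_i=[\mcal{O}_U\xrightarrow{f_i}\mcal{O}_U]\in K_0^{D_i}(U)$, is still supported on $D_i$ (codimension $1$), so after multiplying by the remaining factors you only know the total error is supported on $V\cap U$ --- codimension $q$, not $q+1$. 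Note also that the identity $\psi^k(1-[L^{-1}])=1-[L^{-k}]$ you invoke lives in $K_0(U)$, where $c_i$ maps to zero; the statement you actually need is about $c_i$ in $K_0^{D_i}(U)$, namely $\psi^k(c_i)=(1+[L^{-1}]+\dotsb+[L^{-(k-1)}])\cdot c_i$ with $L=\mcal{O}_U(D_i)$, and this is again \cite[Th\'eor\`eme 3]{So85} (or \cite{SGA6}, Exp.~VII) applied to the regular immersion $D_i\hookrightarrow U$. Once you invoke it the argument does close, and in fact with no error term at all: since $D_i$ is principal, $L$ is trivial, so $\psi^k(c_i)=k\,c_i$ exactly and hence $\psi^k[\mcal{O}_{V\cap U}]=k^q[\mcal{O}_{V\cap U}]$ on the nose. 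So your route is viable, but its essential input is the same Riemann--Roch formula the paper uses, and the purely formal ``$k$ mod $F^2$ implies $k^q$ mod $F^{q+1}$'' step must be replaced by it.
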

\begin{proof}
First, we prove the case $Y$ is a closed point of $X$.
Let $j$ be the inclusion $Y\hookrightarrow X$ and $j_*$ the pushfoward $K_0(Y)\to K^Y_0(X)$.
Then, by \cite[Th\'eor\`eme 3]{So85}, we have $\psi^k\circ j_*=k^d(j_*\circ\psi^k)$.
Since $\psi^k$ is the identity on $K_0(Y)\simeq\mbb{Z}$ and $j_*$ is an isomorphism, we conclude that $\psi^k$ acts by the multiplication by $k^d$ on $K^Y(X)$.
This proves the case $Y$ is a closed point.

We prove the remaining case by descending induction on $p$.
We have seen the case $p=d$.
Let $p<d$.
If $q>p$, then the canonical morphism
\[
	\colim_Z \Gr^qK_0^Z(X) \to \Gr^qK_0^Y(X),
\]
where $Z$ runs over all closed subsets of $Y$ whose codimension in $X$ is greater or equal to $p+1$, is surjective.
By induction, the Adams operation $\psi^k$ acts by the multiplication by $k^q$ on the left term, and so on the right.
It remains to show that the Adams operation $\psi^k$ acts by the multiplication by $k^p$ on $\Gr^pK_0^Y(X)=K_0^Y(X)/F^{p+1}K_0^Y(X)$.
This follows from the exact sequence \cite[Lemma 5.2]{GS87}
\[
\xymatrix@1{
	0 \ar[r] & F^{p+1}K_0^Y(X) \ar[r] & K_0^Y(X) \ar[r] & \displaystyle\bigoplus_{y\in Y\cap X^{(p)}}K^y_0(X_y) \ar[r] & 0.
}
\]
Note that the Adams operations act on the sequence and we have seen that $\psi^k$ acts on the right term by the multiplication by $k^p$.
This completes the proof.
\end{proof}

\subsection*{Example III: relative coniveau filtration}

\begin{definition}\label{def:rel_coniveau}
Let $X$ be a scheme and $D$ a closed subscheme of $X$.
For each $p\ge 0$, we define
\[
	F^pK_0(X,D) := \colim_Z\image(K_0^Z(X)\to K_0(X,D))
\]
where $Z$ runs over all closed subset in $X$ not meeting $D$ of codimension greater or equal to $p$.
We call the filtration the \textit{relative coniveau filtration}.
We write $\Gr^pK_0(X,D)$ for the $p$-th grading of the relative coniveau filtration.
\end{definition}

\begin{lemma}\label{lem:rel_coniveau}
Let $X$ be a scheme of dimension $d$ with an ample family of line bundles and $D$ a closed subschme of $X$.
Assume that $X\setminus D$ is regular and that $D$ has an affine open neighborhood in $X$.
Then the relative coniveau filtration
\[
	K_0(X,D)=F^0 \supset F^1 \supset \dotsb \supset F^d \supset F^{d+1}=0
\]
together with the Adams operations satisfies the condition of Proposition \ref{prop:adams}.
\end{lemma}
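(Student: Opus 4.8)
The plan is to deduce the statement from the absolute case (Lemma \ref{lem:coniveau}) through the presentation of $K_0(X,D)$ supplied by Theorem \ref{thm:K-theory}. Every closed subset $Z$ occurring in Definition \ref{def:rel_coniveau} does not meet $D$, hence lies in the open subscheme $X\setminus D$, which is regular of dimension at most $d$; being closed in $X$, such a $Z$ is also closed in $X\setminus D$, so excision for $K$-theory with supports gives a natural isomorphism $K_0^Z(X)\simeq K_0^Z(X\setminus D)$. Applying Lemma \ref{lem:coniveau} to the regular scheme $X\setminus D$, each $K_0^Z(X)$ therefore carries a finite coniveau filtration on whose $p$-th graded piece $\psi^k$ acts by multiplication by $k^p$. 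Note that codimension is unchanged under the open immersion $X\setminus D\hookrightarrow X$, since it is computed from the local rings at the generic points of $Z$, so the indexing by codimension is unambiguous.

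Next I would pass to the filtered colimit. Writing $\mcal{K}:=\colim_{Y\in\mcal{S}(X|D)}K_0^Y(X)$ with the filtration $F^p\mcal{K}:=\colim_Y F^pK_0^Y(X)$, exactness of filtered colimits yields $\Gr^p\mcal{K}\simeq\colim_Y\Gr^pK_0^Y(X)$, on which $\psi^k$ still acts by $k^p$. Unwinding Definitions \ref{def:coniveau} and \ref{def:rel_coniveau} identifies $F^pK_0(X,D)$ with $\epsilon(F^p\mcal{K})$, where $\epsilon\colon\mcal{K}\to K_0(X,D)$ is the surjection of Lemma \ref{lem:surjectivity}. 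Thus $\epsilon$ is a surjective morphism of filtered abelian groups, so it induces a surjection $\Gr^p\mcal{K}\twoheadrightarrow\Gr^pK_0(X,D)$ for every $p$. Finiteness of the relative filtration is then immediate: $F^0K_0(X,D)=K_0(X,D)$ by Lemma \ref{lem:surjectivity}, while $F^{d+1}K_0(X,D)=0$ because a nonempty closed subset of $X$ has codimension at most $d=\dim X$.

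It remains to render everything $\psi^k$-equivariant, which I expect to be the crux. For this I would use the special $\lambda$-ring models of Lemma \ref{lem:lambda_relK}. When $Z$ does not meet $D$ one has $D\setminus Z=D$, so the right-hand column of the square defining $\widetilde{K}^Z(X,D)$ is an equivalence and $\widetilde{K}^Z(X,D)\simeq\widetilde{K}^Z(X)$ as special $\lambda$-rings; the forget-support morphism to the case $Y=X$ is then a morphism of special $\lambda$-rings realizing $K_0^Z(X)\to K_0(X,D)$, once one invokes the comparison between $\widetilde{K}$ and Thomason--Trobaugh $K$-theory in degree $0$, which is available because $X$ has an ample family of line bundles. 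Consequently $\epsilon$ commutes with the Adams operations, each $F^pK_0(X,D)$ is $\psi^k$-stable, and the induced surjection $\Gr^p\mcal{K}\twoheadrightarrow\Gr^pK_0(X,D)$ is $\psi^k$-equivariant; since $\psi^k$ acts on the source by $k^p$, it acts on the target by $k^p$ as well, which is precisely the condition of Proposition \ref{prop:adams}.

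The genuine obstacle is this compatibility of the Adams operations: one must check that the $\lambda$-structure carried by the relative groups $\widetilde{K}_0(X,D)$ agrees, under the identifications above, with the one used on the absolute support groups in Lemma \ref{lem:coniveau}, and that it is respected by the pushforward and forget-support maps appearing both in the colimit defining $\mcal{K}$ and in $\epsilon$. Granting these $\lambda$-ring identifications, the filtration axioms of Proposition \ref{prop:adams} follow formally from Lemma \ref{lem:coniveau} by the surjection argument above.
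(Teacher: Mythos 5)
Your argument is correct and follows essentially the same route as the paper's proof, which simply observes that $F^0K_0(X,D)=K_0(X,D)$ by Lemma \ref{lem:surjectivity}, that $\colim_{Y\in\mcal{S}(X|D)}\Gr^pK_0^Y(X)\to\Gr^pK_0(X,D)$ is surjective by definition and compatible with the Adams operations, and then invokes Lemma \ref{lem:coniveau}. The details you supply --- excision onto the regular open $X\setminus D$ so that Lemma \ref{lem:coniveau} applies, and the $\lambda$-ring compatibility of $\epsilon$ via Lemma \ref{lem:lambda_relK} --- are exactly the points the paper leaves implicit.
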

\begin{proof}
By Lemma \ref{lem:surjectivity}, $F^0K_0(X,D)=K_0(X,D)$.
By definition, the canonical morphism
\[
	\colim_{Y\in\mcal{S}(X|D)}\Gr^pK_0^Y(X) \to \Gr^pK_0(X,D)
\]
is surjective.
The morphism is compatible with the Adams operations, and thus the result follows from Lemma \ref{lem:coniveau}.
\end{proof}

\section{Proof of Theorem \ref{mainthm}}\label{S:proof}

\subsection*{Cycle class morphisms with supports}

\begin{definition}\label{def:cyc}
Let $X$ be a regular scheme, $Y$ a closed subset of $X$ and $p$ an integer.
The \textit{cycle class morphism} is a group morphism
\[
	\msf{cyc}\colon \mcal{Z}^p_Y(X) \to K_0^Y(X)
\]
defined by sending the closure $V$ of a point $v\in X^{(p)}\cap Y$ to $\mcal{O}_{V}$.
\end{definition}

\begin{theorem}[Gillet-Soul\'e]\label{thm:GS}
Let $X$ be a regular scheme, $Y$ a closed subset of $X$ and $p$ an integer.
Then the cycle class morphism induces a surjective group morphism
\[
	\msf{cyc}\colon \CH^p_Y(X) \twoheadrightarrow F^pK_0^Y(X)/F^{p+1}K_0^Y(X)
\]
and its kernel is $(\prod_{i=1}^{p-2}\omega_i)$-torsion.
\end{theorem}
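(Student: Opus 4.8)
The plan is to reproduce Gillet--Soul\'e's argument \cite{GS87} with the coniveau filtration of Definition \ref{def:coniveau}, splitting the statement into the formal assertions (landing in $F^p$, surjectivity, well-definedness) and the arithmetic one (the torsion bound), the latter being where all the work lies.

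First I would verify that $\msf{cyc}$ maps $\mcal{Z}^p_Y(X)$ into $F^pK_0^Y(X)$: for $v\in X^{(p)}\cap Y$ with closure $V$, the sheaf $\mcal{O}_V$ is supported on the codimension-$p$ closed subset $V$ of $Y$, so $[\mcal{O}_V]$ lies in the image of $K_0^V(X)\to K_0^Y(X)$ and hence in $F^p$. For surjectivity onto $\Gr^pK_0^Y(X)$ I would use the exact sequence \cite[Lemma 5.2]{GS87} identifying $K_0^Y(X)/F^{p+1}K_0^Y(X)$ with $\bigoplus_{y\in Y\cap X^{(p)}}K_0^y(X_y)$, together with d\'evissage: each $K_0^y(X_y)\cong\mbb{Z}$ is generated by the residue-field class, which lifts to a cycle class. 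That $\msf{cyc}$ factors through $\CH^p_Y(X)$ amounts to showing $\msf{cyc}(\mrm{div}(f))\in F^{p+1}$ for $w\in X^{(p-1)}\cap Y$ and $f\in\kappa(w)^\times$; I would reduce this to the vanishing of a principal divisor on $\overline{\{w\}}$ in $\Gr^1$ of its $K$-theory and push it forward along $\overline{\{w\}}\hookrightarrow X$.

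The hard part will be the kernel bound, for which I would manufacture an inverse up to bounded torsion. Lemma \ref{lem:coniveau} says $\psi^k$ acts by $k^q$ on $\Gr^qK_0^Y(X)$, so Proposition \ref{prop:adams} provides, up to multiplication by a product of the $w_i$, a projection of $\Gr^pK_0^Y(X)$ onto the Adams eigenspace where $\psi^k=k^p$. Composing this with a $p$-th Chern class valued in $\CH^p_Y(X)$ yields a map $c_p\colon\Gr^pK_0^Y(X)\to\CH^p_Y(X)$, and the Riemann--Roch without denominators of \cite{SGA6}, in the form exploited in \cite{So85}, computes $c_p\circ\msf{cyc}$ as multiplication by $\pm\prod_{i=1}^{p-2}w_i$ on $\CH^p_Y(X)$: the classical leading term $(-1)^{p-1}(p-1)!\,[V]$ of $c_p(\mcal{O}_V)$ is refined to the integer $\prod_{i=1}^{p-2}w_i$ using the elementary identities for the $w_i$ recalled after Notation \ref{not:w_i}. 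Since the composite is this scalar, $\ker\msf{cyc}$ is annihilated by $\prod_{i=1}^{p-2}w_i$.

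I expect the principal difficulty to be carrying the Chern-class construction and the Riemann--Roch computation through \emph{with supports in $Y$} and compatibly with the coniveau colimit, rather than for a single regular closed immersion. Proposition \ref{prop:adams} is precisely the tool that keeps everything integral at the controlled cost $\prod_{i=1}^{p-2}w_i$, thereby avoiding the Gersten-type degeneration that a denominator-free identification $\Gr^pK_0^Y(X)\cong\CH^p_Y(X)$ would otherwise demand.
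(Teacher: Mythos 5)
Your outline of the formal steps (landing in $F^p$, surjectivity via \cite[Lemma 5.2]{GS87} and d\'evissage, factoring through rational equivalence) is reasonable, but the heart of the argument --- the torsion bound --- rests on a claim that is not true. Riemann--Roch without denominators computes $c_p(\mcal{O}_V)$ as $(-1)^{p-1}(p-1)!\,[V]$ modulo higher codimension; there is no ``refinement'' that replaces the factorial by $\prod_{i=1}^{p-2}w_i$. These are genuinely different integers with incomparable prime factorizations (for $p=4$ one has $(p-1)!=6$ while $w_1w_2=48$), so no elementary identity among the $w_i$ converts one into the other: the factorials come from the Chern-class formalism, whereas the $w_i$ come from gcd's of the eigenvalue differences $k^p-k^q$ of Adams operations, and the two mechanisms cannot be conflated. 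Worse, pre-composing with the approximate Adams projector of Proposition \ref{prop:adams} costs additional factors $\prod w_{|i-j|}$, so your composite $c_p\circ\msf{cyc}$ would at best be multiplication by $(p-1)!$ times a further product of $w$'s --- enough for the $(p-1)!^N$ statement of Theorem \ref{mainthm}, but not for the constant $\prod_{i=1}^{p-2}w_i$ asserted here. You would also need Chern classes valued in $\CH^p_Y(X)$ for arbitrary regular noetherian schemes, machinery the paper deliberately avoids (see the remark after Construction \ref{constr:pullback}).

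The paper's actual proof takes a different and sharper route: run the Gersten--Quillen spectral sequence with supports, $E_1^{p,q}=\bigoplus_{x\in X^{(p)}\cap Y}K_{-p-q}(k(x))\Rightarrow K^Y_{-p-q}(X)$, on which the Adams operations act; by Soul\'e's Riemann--Roch formula $\psi^k$ acts by $k^{p+i}$ on $E_r^{p,-p-i}$ for $i=0,1$. Since $d_r$ commutes with $\psi^k$, the differential $E_r^{p-r,-p+r-1}\to E_r^{p,-p}$ intertwines the eigenvalues $k^{p-r+1}$ and $k^p$ and is therefore killed by $w_{r-1}$; the differential arriving from $E^{0,-1}$ vanishes outright because $E_2^{0,-1}\simeq E_\infty^{0,-1}$. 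Hence the kernel of the edge surjection $\CH^p_Y(X)\simeq E_2^{p,-p}\twoheadrightarrow E_\infty^{p,-p}\simeq\Gr^pK_0^Y(X)$ is killed by exactly $\prod_{i=1}^{p-2}w_i$, and the identifications of $E_2^{p,-p}$ and $E_\infty^{p,-p}$ deliver well-definedness and surjectivity simultaneously. If you want to keep a Chern-class argument, you must settle for the weaker factorial bound and say so; to prove the statement as written you need the spectral-sequence mechanism.
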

\begin{proof}
This is essentially a consequence of results in \cite{So85} as observed in \cite[Theorem 8.2]{GS87}.
Since our formulation claims a little bit stronger than the original one, we give a sketch of the proof here.

Consider the Gersten-Quillen spectral sequence
\[
	E^{p,q}_1 = \bigoplus_{x\in X^{(p)}\cap Y}K_{-p-q}(k(x)) \Rightarrow K^Y_{-p-q}(X).
\]
The Adams operations act on this spectral sequence, and by the Riemann-Roch type formula \cite[Th\'eor\`eme 3]{So85} $\psi^k$ acts by the multiplication by $k^{p+i}$ on $E^{p,-p-i}_r$ for $i=0,1$ and $r\ge 1$.
It follows that the differential $d_r^p \colon E^{p-r,-p+r-1}_r \to E^{p,-p}_r$ is killed by $\omega_{r-1}$ for $p\ge r>1$.
In fact, $d_r^r=0$ for $r>1$, because
\[
	E^{0,-1}_2 \simeq \bigoplus_{x\in X^{(0)}\cap Y}\mcal{O}(\overline{\{x\}})^\times \simeq E^{0,-1}_\infty.
\]
Consequently, the kernel of the canonical surjection $E^{p,-p}_2 \twoheadrightarrow E^{p,-p}_\infty$ is killed by $\prod_{i=1}^{p-2}\omega_i$.
On the other hand, we have $E^{p,-p}_2 \simeq \CH^p_Y(X)$ and $E^{p,-p}_\infty \simeq \Gr^pK^Y_0(X)$.
Hence, we get the result.
\end{proof}

\begin{lemma}\label{lem:commutativity}
Let $X$ be a regular scheme, $Y$ a closed subset of $X$, $D$ a principal effective Cartier divisor on $X$ and $p$ an integer.
We denote the inclusion $D\hookrightarrow X$ by $\iota$.
Then the diagram
\[
\xymatrix{
	K_0^Y(X) \ar[r]^-{\iota^*} & K_0^{Y\cap D}(D)/F^{p+1}K_0^{Y\cap D}(D) \\
	\mcal{Z}^p_{Y}(X) \ar[r]^-{\iota^*} \ar[u]^{\msf{cyc}} & \CH^p_{Y\cap D}(D) \ar[u]^{\msf{cyc}}
}
\]
commutes.
Here, the bottom horizontal map $\iota^*$ is the one defined in Construction \ref{constr:pullback}.
\end{lemma}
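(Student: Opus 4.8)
The plan is to verify commutativity on the free generators $[V]$ of $\mcal{Z}^p_Y(X)$, $V$ being the closure of a point $v\in X^{(p)}\cap Y$, following the case division of Construction \ref{constr:pullback} according to whether $V\nsubseteq D$ or $V\subseteq D$. Since the top horizontal map is the derived restriction $L\iota^*$ (well defined with supports because $\iota\colon D\hookrightarrow X$ is a regular, hence finite-Tor-dimension, closed immersion), the composite through the top-left corner sends $[V]$ to the class of $L\iota^*\mcal{O}_V$ in $K_0^{Y\cap D}(D)/F^{p+1}$, so the whole problem is to compute $L\iota^*\mcal{O}_V$ and read off its codimension-$p$ cycle. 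The one tool I would isolate first is the \emph{multiplicity principle}: for a coherent sheaf $\mcal{F}$ on $D$ with support of codimension $\ge p$, the class $[\mcal{F}]$ lies in $F^pK_0^{\mathrm{Supp}\,\mcal{F}}(D)$ and its image in $F^p/F^{p+1}$ equals $\msf{cyc}$ of the codimension-$p$ cycle $\sum_z\mathrm{length}_{\mcal{O}_{D,z}}(\mcal{F}_z)[\overline{\{z\}}]$, the sum over the generic points $z$ of $\mathrm{Supp}\,\mcal{F}$ in $D^{(p)}$. This is the standard dévissage behind the cycle class map (cf.\ \cite[Chapter 2]{Fu98}), proved by filtering $\mcal{F}$ with integral-subscheme quotients and discarding the terms of codimension $>p$.

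For the case $V\nsubseteq D$, the global equation of the principal divisor $D$ is a non-zero-divisor on the integral scheme $V$, so no higher Tor appears and $L\iota^*\mcal{O}_V=\mcal{O}_{V\times_XD}$. By Lemma \ref{lem:pullback} this is supported in pure codimension $p$ in $D$ with codimension-$p$ cycle $[V\times_XD]$, so the multiplicity principle yields $[L\iota^*\mcal{O}_V]\equiv\msf{cyc}([V\times_XD])\bmod F^{p+1}$, which is precisely $\msf{cyc}(\iota^*[V])$ by Construction \ref{constr:pullback}.

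For the case $V\subseteq D$, I would exploit that $D$ is principal: writing $D=V(f)$, the line bundle $\mcal{O}_X(D)$ is trivial and its canonical section is $f$, which restricts to $0$ on $V$. Hence the Koszul resolution $[\mcal{O}_X(-D)\xrightarrow{f}\mcal{O}_X]$ of $\mcal{O}_D$ gives, after restriction to $V$, a two-term complex with zero differential, so
\[
	L\iota^*\mcal{O}_V\cong j_*\mcal{O}_V\oplus j_*\mcal{O}_V[1],\qquad j\colon V\hookrightarrow D,
\]
whence $[L\iota^*\mcal{O}_V]=[j_*\mcal{O}_V]-[j_*\mcal{O}_V]=0$. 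On the other side, $\iota^*[V]=j_*[\mcal{O}_X(D)|_V]$ vanishes in $\CH^p_{Y\cap D}(D)$ because $\mcal{O}_X(D)|_V$ is the trivial line bundle. Thus both composites are zero, exactly as in the principal-divisor vanishing observed in the proof of Proposition \ref{prop:cycle}.

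The main obstacle is the case $V\nsubseteq D$: one must justify the vanishing of higher Tor for the restriction of $\mcal{O}_V$ along the regularly embedded $D$, and then invoke the multiplicity principle to convert the $K$-theoretic class $[\mcal{O}_{V\times_XD}]$ into the cycle $[V\times_XD]$ modulo $F^{p+1}$; this dévissage, together with the verification that $L\iota^*$ is defined on $K$-theory with supports, is the technical heart. The case $V\subseteq D$ is then a short Koszul computation made trivial by principality.
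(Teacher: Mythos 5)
Your proof is correct and follows essentially the same route as the paper's: check commutativity on the generators $[V]$, treat the case $V\nsubseteq D$ by observing that $f$ is a non-zero-divisor on $V$ so that $L\iota^*\mcal{O}_V=\mcal{O}_{V\times_XD}$, and treat the case $V\subseteq D$ by the Koszul complex $[\mcal{O}_V\xrightarrow{f}\mcal{O}_V]$ with $f|_V=0$ together with the triviality of $\mcal{O}_X(D)|_V$. The only difference is that you spell out the d\'evissage/multiplicity argument identifying $[\mcal{O}_{V\times_XD}]$ with $\msf{cyc}([V\times_XD])$ modulo $F^{p+1}$, a step the paper dismisses as clear.
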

\begin{proof}
Suppose that we are given $v\in X^{(p)}\cap Y$ and denote its closure in $X$ by $V$.
If $V\nsubseteq D$, then the commutativity is clear, i.e., $(\iota^*\circ\msf{cyc})[V]=(\msf{cyc}\circ\iota^*)[V]$.
Suppose that $V\subseteq D$.
Then $\iota^*[V]=0$ in $\CH^p_{Y\cap D}(D)$.
Also, $\iota^*(\mcal{O}_V) = [\mcal{O}_V \xrightarrow{f} \mcal{O}_V] = 0$ in $K_0^{Y\cap D}(D)$, where $f$ denotes the defining equation of $D$.
This proves the lemma.
\end{proof}


\begin{corollary}\label{cor:commutativity}
Under the situation in Lemma \ref{lem:commutativity}, the restriction morphism $K_0^Y(X) \to K_0^{Y\cap D}(D)$ preserves the coniveau filtration.
\end{corollary}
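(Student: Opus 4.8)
The plan is to prove, by descending induction on $q$, that $\iota^*(F^qK_0^Y(X)) \subseteq F^qK_0^{Y\cap D}(D)$ for every $q \ge 0$; this is precisely the assertion that $\iota^*$ preserves the coniveau filtration. The induction is anchored by the finiteness of the coniveau filtration on $K_0^Y(X)$ (Lemma \ref{lem:coniveau}): for $q$ larger than $\dim X$ one has $F^qK_0^Y(X)=0$, so the inclusion holds vacuously. The inductive hypothesis is that $\iota^*(F^{q+1}K_0^Y(X)) \subseteq F^{q+1}K_0^{Y\cap D}(D)$.

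The engine of the inductive step is that the graded piece $F^q/F^{q+1}$ is generated by cycle classes. Concretely, let $x\in F^qK_0^Y(X)$. By the surjectivity in Theorem \ref{thm:GS}, there is a cycle $\zeta\in\mcal{Z}^q_Y(X)$ whose cycle class $\msf{cyc}(\zeta)\in F^qK_0^Y(X)$ agrees with $x$ modulo $F^{q+1}K_0^Y(X)$, so that $x=\msf{cyc}(\zeta)+r$ with $r\in F^{q+1}K_0^Y(X)$. The remainder contributes $\iota^*(r)\in F^{q+1}K_0^{Y\cap D}(D)\subseteq F^qK_0^{Y\cap D}(D)$ by the inductive hypothesis, so it remains only to control $\iota^*(\msf{cyc}(\zeta))$. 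Here Lemma \ref{lem:commutativity} does the real work: it identifies $\iota^*(\msf{cyc}(\zeta))$ with $\msf{cyc}(\iota^*\zeta)$ in $K_0^{Y\cap D}(D)/F^{q+1}K_0^{Y\cap D}(D)$, where $\iota^*\zeta\in\CH^q_{Y\cap D}(D)$ is the cycle-theoretic pullback of Construction \ref{constr:pullback}. Since $\iota^*\zeta$ is represented by codimension-$q$ cycles on $D$, its cycle class lies in $F^qK_0^{Y\cap D}(D)$ by the definition of the coniveau filtration. Thus $\iota^*(\msf{cyc}(\zeta))$ differs from an element of $F^qK_0^{Y\cap D}(D)$ by an element of $F^{q+1}K_0^{Y\cap D}(D)\subseteq F^qK_0^{Y\cap D}(D)$, hence itself lies in $F^qK_0^{Y\cap D}(D)$. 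Combining the two contributions gives $\iota^*(x)\in F^qK_0^{Y\cap D}(D)$, closing the induction.

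The step I expect to be the genuine obstacle — and the reason the argument is routed through cycle classes rather than through supports directly — is exactly the control of $\iota^*(\msf{cyc}(\zeta))$. A naive approach would note that a class supported on $Z\subseteq Y$ of codimension $\ge q$ restricts to a class supported on $Z\cap D$ and hope that $\codim_D(Z\cap D)\ge q$; this fails precisely when a component $V$ of $Z$ is contained in $D$, since intersecting such a component with $D$ does not raise codimension. Lemma \ref{lem:commutativity} circumvents this: for a component $V\subseteq D$ the restriction $\iota^*(\mcal{O}_V)=[\mcal{O}_V\xrightarrow{f}\mcal{O}_V]$ vanishes in $K_0^{Y\cap D}(D)$, matching the vanishing of $\iota^*[V]$ in the Chow group. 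It is at this point that the hypothesis that $D$ be \emph{principal} is used, and this vanishing is what keeps everything at the correct filtration level.
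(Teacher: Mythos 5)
Your proof is correct, and it is precisely the argument the paper intends: the corollary is stated without proof as an immediate consequence of Lemma \ref{lem:commutativity} together with the surjectivity of $\msf{cyc}$ onto the graded pieces (Theorem \ref{thm:GS}), assembled by exactly the descending induction you describe. Your closing remark also correctly pinpoints why the detour through cycle classes (rather than through supports) is needed, namely the components of the support contained in $D$.
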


\subsection*{On codimension one}

\begin{lemma}\label{lem:cod1}
Let $X$ be a regular scheme and $Y$ a closed subset of $X$ not containing any irreducible components of $X$.
Then there are natural isomorphisms
\begin{gather*}
	K_0^Y(X) \simeq \CH^1_Y(X) \oplus F^2K_0^Y(X), \\
	\CH^1_Y(X) \simeq H^1_{\mrm{Zar},Y}(X,\mcal{O}^\times)
	\quad \text{and} \quad
	F^2K_0^Y(X) \simeq F^2_{\gamma}K_0^Y(X).
\end{gather*}
\end{lemma}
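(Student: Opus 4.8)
**

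The plan is to establish the three isomorphisms by combining the Adams decomposition machinery of Proposition \ref{prop:adams} with the low-dimensional computation of the $\gamma$-filtration from Lemma \ref{lem:lambda_relK}. The key observation is that the hypothesis on $Y$ (not containing any irreducible component of $X$) forces $Y$ to have codimension at least $1$, so the coniveau filtration on $K_0^Y(X)$ begins at $F^1$ and we are in a situation where only two graded pieces can be nonzero in the relevant range.

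First I would unwind the coniveau filtration of Definition \ref{def:coniveau}. Since $Y$ meets no irreducible component of $X$, we have $Y \cap X^{(0)} = \varnothing$, so $F^0 K_0^Y(X) = F^1 K_0^Y(X) = K_0^Y(X)$. The decisive step is to compare the coniveau filtration $F^*$ with the $\gamma$-filtration $F^*_\gamma$. I would argue that they agree on $K_0^Y(X)$ in the range needed: the cycle class morphism of Theorem \ref{thm:GS} identifies $\Gr^1 K_0^Y(X)$ with $\CH^1_Y(X)$ \emph{integrally} in codimension one (the torsion bound $\prod_{i=1}^{p-2}\omega_i$ is an empty product for $p=1$, hence trivial), so on the top graded piece the coniveau and $\gamma$-filtrations have the same associated graded and one can hope to match the filtrations themselves. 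The identification $\CH^1_Y(X) \simeq H^1_{\mrm{Zar},Y}(X,\mcal{O}^\times)$ then follows either directly from the Gersten resolution of $\mcal{O}^\times$ or by comparing with Lemma \ref{lem:lambda_relK} (the case $D=\varnothing$, $n=0$), which computes $F^1_\gamma/F^2_\gamma$ as precisely this relative cohomology group.

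Next I would produce the splitting. The Adams operations act on $K_0^Y(X)$, and by Lemma \ref{lem:coniveau} the coniveau filtration satisfies the hypotheses of Proposition \ref{prop:adams}; moreover by the determinant computation $\psi^k$ acts on $\Gr^1 = \CH^1_Y(X)$ by multiplication by $k$. Because $\CH^1_Y(X)$ sits as the \emph{top} graded quotient $K_0^Y(X)/F^2 K_0^Y(X)$ and the relevant torsion constant $w_{|1-q|}$ governing the $(1)$-eigencomponent is controlled, I would exhibit a genuine splitting $K_0^Y(X) \simeq \CH^1_Y(X) \oplus F^2 K_0^Y(X)$ rather than merely a splitting up to bounded torsion. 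The cleanest route is to use the weight-$1$ eigenspace projector: since $\psi^k - k$ annihilates the degree-one part and $\psi^k - k^q$ for $q \ge 2$ is invertible there, the Adams idempotent construction yields an honest direct-sum decomposition; equivalently, one invokes that in codimension one the graded piece is $\mcal{O}^\times$-cohomology, where the $\lambda$-ring structure splits off the Picard/line-bundle part canonically via $\det$.

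The main obstacle I anticipate is upgrading the \emph{up-to-torsion} statements of Proposition \ref{prop:adams} and Theorem \ref{thm:GS} to \emph{exact} isomorphisms in this codimension-one case. The general Adams decomposition only gives $N$-isomorphisms, so I must verify that all the offending $w_i$ and $\omega_i$ factors become trivial here: the surjection in Theorem \ref{thm:GS} has kernel killed by an empty product when $p=1$, and the weight-$1$ piece of the $\gamma$-filtration is computed \emph{integrally} by Lemma \ref{lem:lambda_relK}. I would therefore carefully track that the only Adams weights occurring are $q=1$ (giving $\CH^1$) and $q \ge 2$ (giving $F^2$), so that no factor $w_0 = 1$ aside, the separation of the weight-$1$ eigenspace is exact. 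The identification $F^2 K_0^Y(X) \simeq F^2_\gamma K_0^Y(X)$ then follows because both filtrations share the same $\Gr^1$ and the $\gamma$-filtration is finer than (indeed, in this range, equal to) the coniveau filtration; confirming this coincidence of the two $F^2$'s is the remaining technical point, handled by comparing the two filtrations' first steps through the determinant map.
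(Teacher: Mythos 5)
Your reduction to the case $F^1K_0^Y(X)=K_0^Y(X)$ and your observation that Theorem \ref{thm:GS} is an \emph{integral} isomorphism for $p=1$ (empty torsion product) are correct, and your treatment of the second and third isomorphisms is essentially the paper's. But the argument you give for the direct-sum decomposition $K_0^Y(X)\simeq\CH^1_Y(X)\oplus F^2K_0^Y(X)$ has a genuine gap. The Adams-idempotent route does not produce an honest splitting: separating the weight-$1$ eigenspace from weight $q\ge 2$ requires inverting $w_{q-1}$ (already $w_1=2$ for $q=2$), since $\psi^k-k^q$ acts on the weight-$1$ part by $k-k^q$, which is not invertible on an arbitrary abelian group; Proposition \ref{prop:adams} only ever yields $N$-isomorphisms, and nothing in the codimension-one situation makes those constants trivial. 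Your fallback — that the determinant splits off the Picard/line-bundle part canonically — also fails: the map $L\mapsto[L]$ is not additive, and the paper's own footnote to Lemma \ref{lem:lambda_relK} explicitly records that Soul\'e's claimed splitting $K_0(X)=H^0(X,\mbb{Z})\oplus\Pic(X)\oplus F^2_\gamma K_0(X)$ is false in general precisely because $\Pic(X)$ need not split off.

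The missing idea is elementary and is what the hypothesis on $Y$ is really for: since $Y\cap X^{(0)}=\varnothing$, the divisor relations in Definition \ref{def:Chow_sup} for $p=1$ are indexed by the empty set, so $\CH^1_Y(X)=\mcal{Z}^1_Y(X)$ is a \emph{free} abelian group on the codimension-one components of $Y$. Hence the surjection $K_0^Y(X)\twoheadrightarrow K_0^Y(X)/F^2K_0^Y(X)\simeq\CH^1_Y(X)$ has free target and therefore splits, giving the first isomorphism integrally with no Adams machinery at all. With that in place, your derivation of $F^2K_0^Y(X)\simeq F^2_\gamma K_0^Y(X)$ from the agreement of the two first graded pieces goes through as you describe.
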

\begin{proof}
Since $K^Y_0(X)=F^1K_0^Y(X)$, we get an isomorphism
\[
	\msf{cyc}\colon \CH^1_Y(X) \xrightarrow{\sim} K_0^Y(X)/F^2K_0^Y(X)
\]
by Theorem \ref{thm:GS}.
Since $\CH^1_Y(X)$ is the free abelian group generated by the irreducible components of $Y$ of codimension one in $X$, the above isomorphism factors through $K_0^Y(X)$.
This proves the first isomorphism.
The second isomorphism follows from the quasi-isomorphism $\mcal{Z}^1(X,\bullet)\simeq \mcal{O}^\times[1]$ (\cite[Theorem 6.1]{Bl86}) and Lemma \ref{lem:lambda_relK}.
The last isomorphism follows from the first two isomorphisms.
\end{proof}

\begin{corollary}\label{cor:cod1}
Let $X$ be a regular scheme and $D$ an effective Cartier divisor on $X$.
Assume that $D$ has an affine open neighborhood in $X$.
Then there are exact sequences
\begin{gather*}
\xymatrix@1{
	\displaystyle\colim_{Y\in\mcal{S}(X|D,1)}F^2K_0^Y(X\times\Box^1) \ar[r]^-{\iota_0^*-\iota_1^*} & \displaystyle\colim_{Y\in\mcal{S}(X|D)}F^2K_0^Y(X) \ar[r]^-\epsilon & F^2K_0(X,D) \ar[r] & 0
} \\
\xymatrix@1{
	\displaystyle\colim_{Y\in\mcal{S}(X|D,1)}\Gr^1K_0^Y(X\times\Box^1) \ar[r]^-{\iota_0^*-\iota_1^*} & \displaystyle\colim_{Y\in\mcal{S}(X|D)}\Gr^1K_0^Y(X) \ar[r]^-\epsilon & \Gr^1K_0(X,D) \ar[r] & 0
}
\end{gather*}
and the same for the $\gamma$-filtration.
\end{corollary}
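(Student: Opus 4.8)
The plan is to obtain both exact sequences by splitting the exact sequence of Theorem \ref{thm:K-theory} according to the coniveau filtration. The first thing to verify is that every map in that sequence is compatible with the (relative) coniveau filtration. For $\epsilon$ this is built into Definition \ref{def:rel_coniveau}: the image of $\colim_Y F^pK_0^Y(X)$ under $\epsilon$ is exactly $F^pK_0(X,D)$, so $\epsilon$ restricts to a surjection $\colim_Y F^pK_0^Y(X)\twoheadrightarrow F^pK_0(X,D)$ and induces a surjection on $\Gr^p$. For $\iota_0^*$ and $\iota_1^*$ I would invoke Corollary \ref{cor:commutativity}, applied to the principal effective Cartier divisors $X\times\{0\}$ and $X\times\{1\}$ on $X\times\Box^1$: restriction along a principal divisor preserves the coniveau filtration. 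Hence the whole three-term sequence of Theorem \ref{thm:K-theory} becomes a sequence of filtered abelian groups, with the filtration on each colimit induced termwise.

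Next I would feed in the natural splitting. By Lemma \ref{lem:cod1} the decomposition $K_0^Y(X)\cong\CH^1_Y(X)\oplus F^2K_0^Y(X)$ is natural in $Y$, where $\CH^1_Y(X)$ is identified with $\Gr^1K_0^Y(X)=F^1/F^2$ via $\msf{cyc}$, an isomorphism for $p=1$ by Theorem \ref{thm:GS}. The crucial point is that this splitting is respected by $\iota_0^*-\iota_1^*$: the $F^2$-summand is preserved by the preceding paragraph, while on the $\Gr^1\cong\CH^1$-summand the commutation of $\msf{cyc}$ with $\iota_a^*$ is precisely Lemma \ref{lem:commutativity} (taken with $p=1$ and $D$ the divisor $X\times\{a\}$). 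Passing to the filtered colimit, which is exact and commutes with the direct-sum decomposition, the sequence of Theorem \ref{thm:K-theory} is thereby identified with the direct sum of the two candidate sequences, one assembled from $\Gr^1$ and one from $F^2$.

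Finally, exactness of a direct sum of three-term sequences is equivalent to exactness of each summand; since the total sequence is exact by Theorem \ref{thm:K-theory}, both the $\Gr^1$-sequence and the $F^2$-sequence are exact, which is the assertion. The $\gamma$-filtration version then costs no new work: by Lemma \ref{lem:cod1} one has $F^2K_0^Y(X)=F^2_\gamma K_0^Y(X)$, and combining Lemma \ref{lem:cod1} with Lemma \ref{lem:lambda_relK} gives $\CH^1_Y(X)\cong\Gr^1_\gamma K_0^Y(X)$ naturally, so the $\gamma$-sequences coincide termwise with those already produced. I expect the main obstacle to lie in the second step, namely confirming that the natural splitting is genuinely respected by $\iota_0^*-\iota_1^*$ throughout the directed systems $\mcal{S}(X|D,1)$ and $\mcal{S}(X|D)$, so that the sequence splits as an honest direct sum and not merely as a filtered one. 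This is exactly the compatibility that Lemma \ref{lem:commutativity} and Corollary \ref{cor:commutativity} are designed to furnish, and the care needed is in checking that these identifications assemble coherently across the colimits.
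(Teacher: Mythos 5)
Your proposal is correct and follows essentially the same route as the paper: the maps respect the filtration by Corollary \ref{cor:commutativity}, and the two displayed sequences are exhibited as the summands of the exact sequence of Theorem \ref{thm:K-theory} under the natural splitting of Lemma \ref{lem:cod1}, with the $\gamma$-filtration statement reduced to the coniveau one via Lemma \ref{lem:cod1} and Lemma \ref{lem:lambda_relK}. The one point you pass over is the paper's reduction to $X$ connected together with the case distinction $D=\varnothing$ (where the result is clear) versus $D\ne\varnothing$: this is needed because Lemma \ref{lem:cod1} assumes $Y$ contains no irreducible component of $X$, a hypothesis that holds for all $Y\in\mcal{S}(X|D)$ and $Y\in\mcal{S}(X|D,1)$ only once $D$ is nonempty on each connected component.
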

\begin{proof}
The morphisms are well-defined by Corollary \ref{cor:commutativity}.
We may assume that $X$ is connected.
If $D$ is empty, then the result is clear. 
If $D$ is not empty, then it follows from Theorem \ref{thm:K-theory} and Lemma \ref{lem:cod1}.
\end{proof}

\begin{theorem}\label{thm:cod1}
Let $X$ be a regular scheme and $D$ an effective Cartier divisor on $X$.
Assume that $D$ has an affine open neighborhood in $X$.
Then there are natural isomorphisms
\[
	\CH^1(X|D) \simeq \Gr^1K_0(X,D) \simeq \Gr^1_{\gamma}K_0(X,D) \simeq \Pic(X,D).
\]
\end{theorem}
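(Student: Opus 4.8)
The plan is to deduce all four isomorphisms by comparing the cokernel presentations established above, term by term where possible and at the level of images where not. First I would reduce to the case that $X$ is connected, hence integral since $X$ is regular. If $D=\varnothing$ the assertion is the classical chain $\CH^1(X)\simeq\Gr^1K_0(X)\simeq\Gr^1_\gamma K_0(X)\simeq\Pic(X)$, so I would assume $D\neq\varnothing$. The key preliminary observation is that every support occurring in the index categories is admissible for Lemma \ref{lem:cod1}: a closed subset $Y\in\mcal{S}(X|D)$ misses the nonempty $D$, so $Y\neq X$ and does not contain the unique irreducible component of $X$; and no $Y\in\mcal{S}(X|D,1)$ equals $X\times\Box^1$, because the full product fails the modulus condition when $D\neq\varnothing$ (on the regular, hence normal, $X\times\Boxbar^1$ the horizontal divisor $D\times\Boxbar^1$ is not dominated by the vertical divisor $X\times\{\infty\}$). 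Thus for every relevant $Y$ Lemma \ref{lem:cod1} furnishes $\CH^1_Y\simeq\Gr^1K_0^Y\simeq\Gr^1_\gamma K_0^Y$ together with $F^2K_0^Y=F^2_\gamma K_0^Y$.

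With this in hand, the isomorphism $\Gr^1K_0(X,D)\simeq\Gr^1_\gamma K_0(X,D)$ is immediate: by Lemma \ref{lem:cod1} the coniveau and $\gamma$-versions of the exact sequences in Corollary \ref{cor:cod1} have identical left and middle terms and identical differentials, so their cokernels agree.

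For $\CH^1(X|D)\simeq\Gr^1K_0(X,D)$ I would compare the presentation of $\CH^1(X|D)$ from Proposition \ref{prop:cycle} (with $p=1$) to the coniveau presentation from Corollary \ref{cor:cod1}. The cycle class map $\msf{cyc}$ induces an isomorphism on middle terms $\colim_{Y\in\mcal{S}(X|D)}\CH^1_Y(X)\xrightarrow{\sim}\colim_{Y\in\mcal{S}(X|D)}\Gr^1K_0^Y(X)$ by Lemma \ref{lem:cod1}, and on left terms a surjection $\colim_{Y\in\mcal{S}(X|D,1)}\mcal{Z}^1_Y(X\times\Box^1)\twoheadrightarrow\colim_{Y\in\mcal{S}(X|D,1)}\Gr^1K_0^Y(X\times\Box^1)$, factoring through $\colim\CH^1_Y(X\times\Box^1)$. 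Because $\{t=0\}$ and $\{t=1\}$ are principal Cartier divisors, Lemma \ref{lem:commutativity} shows the resulting square commutes, so $\msf{cyc}$ carries the image of the Chow differential $\iota_0^*-\iota_1^*$ onto the image of the $K$-theoretic differential; passing to cokernels yields the isomorphism. I expect this to be the main obstacle: the two left-hand columns are genuinely not isomorphic (free cycle groups versus Grothendieck groups with supports), so the argument cannot proceed term by term and must instead be run at the level of images of the differentials, which is exactly where the surjectivity of $\msf{cyc}$ and the compatibility provided by Lemma \ref{lem:commutativity} become essential.

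Finally, for $\Gr^1_\gamma K_0(X,D)\simeq\Pic(X,D)$ I would invoke Lemma \ref{lem:lambda_relK} with $Y=X$ and $n=0$, whose determinant computation identifies $F^1_\gamma/F^2_\gamma$ of the relative group with $\mbb{H}^1_{\mrm{Zar}}((X,D),\mcal{O}^\times)$, which is by definition $\Pic(X,D)$. The only point needing care here is the identification of this $\gamma$-graded piece of the $B\GL^+$-model $\widetilde{K}_0(X,D)$ with $\Gr^1_\gamma K_0(X,D)$; since $X$ is regular the comparison map between the two versions of $K_0$ respects the $\lambda$-structures in the relevant range, so this step is routine once Step 2 is in place.
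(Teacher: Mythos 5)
Your proposal is correct and follows essentially the same route as the paper: the first isomorphism is obtained by comparing the presentation of $\CH^1(X|D)$ from Proposition \ref{prop:cycle} with the one from Corollary \ref{cor:cod1} via the cycle class map (surjective on the left column, an isomorphism on the middle column, commuting by Lemma \ref{lem:commutativity}), the second from Lemma \ref{lem:cod1} together with Corollary \ref{cor:cod1}, and the last from Lemma \ref{lem:lambda_relK}. Your explicit verification that every relevant support avoids the irreducible components (so that Lemma \ref{lem:cod1} applies) is a detail the paper absorbs into the proof of Corollary \ref{cor:cod1}, but it is the same argument.
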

\begin{proof}
The first isomorphism follows from the commutative diagram
\[
\xymatrix{
	\displaystyle\colim_{Y\in\mcal{S}(X|D,1)}\Gr^1K_0^Y(X\times\Box^1) \ar[r]^-{\iota_0^*-\iota_1^*} 
		& \displaystyle\colim_{Y\in\mcal{S}(X|D)}\Gr^1K_0^Y(X) \ar[r]^-\epsilon & \Gr^1K_0(X,D) \ar[r] & 0 \\
	\displaystyle\colim_{Y\in\mcal{S}(X|D,1)}\mcal{Z}^1_Y(X\times\Box^1) \ar[r]^-{\iota_0^*-\iota_1^*} \ar@{->>}[u]^{\msf{cyc}}
		& \displaystyle\colim_{Y\in\mcal{S}(X|D)}\CH^1_Y(X) \ar[r]^-\epsilon \ar[u]^{\msf{cyc}}_{\simeq} & \CH^1(X|D) \ar[r] \ar@{.>}[u] & 0.
}
\]
This is indeed commutative by Lemma \ref{lem:commutativity}, and the rows are exact by Proposition \ref{prop:cycle} and Corollary \ref{cor:cod1}.
The middle vertical arrow is an isomorphism by Theorem \ref{thm:GS}.

The second isomorphism follows from Lemma \ref{lem:cod1} and Corollary \ref{cor:cod1}.
The last isomorphism has been observed in Lemma \ref{lem:lambda_relK}.
\end{proof}

\begin{example}\label{counterexample}
Let $k$ be a field, $X=\mbb{P}^1_k\times_k\mbb{P}^1$ and $D=\mbb{P}^1_k$ which we regard as a Cartier divisor on $X$ by the diagonal embedding.
Then $\CH^1(X|D)=0$, but $\Gr^1_{\gamma}K_0(X,D)=\Pic(X,D)=\mbb{Z}$.
\end{example}

\subsection*{On higher codimension}

\begin{lemma}\label{lem:refine}
Let $X$ be a regular scheme, $D$ an effective Cartier divisor on $X$ and $p$ an integer.
Assume that $D$ has an affine open neighborhood in $X$.
Suppose we are given
\[
	\alpha \in \ker\Bigl(\colim_{Y\in\mcal{S}(X|D)}\Gr^pK_0^Y(X) \xrightarrow{\epsilon} \Gr^pK_0(X,D)\Bigr).
\]
Then there exists $\beta\in\Gr^pK_0^W(X\times\Box^1)$ for some $W\in\mcal{S}(X|D,1)$ such that
\[
	\iota_0^*\beta-\iota_1^*\beta = \Bigl(\prod_{i=2}^{p-1}w_{p-i}\Bigr)\Bigl(\prod_{2\le i,j\le p}w_{|i-j|}\Bigr)^2\alpha.
\]
\end{lemma}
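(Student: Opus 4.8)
The plan is to reduce the graded statement to the ungraded, codimension$\,{\ge}\,2$ exactness already proved, and then descend by the Adams decomposition of Proposition \ref{prop:adams}, keeping careful track of weights. We may assume $p\ge 2$: for $p\le 1$ the range $[2,p]$ is empty, $C=1$, and the assertion is the $\Gr^1$-part of Corollary \ref{cor:cod1}. Abbreviate
\[
	c_1 := \prod_{i=2}^{p-1}w_{p-i}, \qquad c_3 := \prod_{2\le i,j\le p}w_{|i-j|},
\]
so that the target integer is $C=c_1c_3^2$; note also $\prod_{q=2}^{p}w_{|p-q|}=c_1$ since $w_0=1$. In outline: first I would lift $\alpha$ to an ungraded class on $X$ that dies in $K_0(X,D)$; then I would cross $\iota_0^*-\iota_1^*$ by the $F^2$-sequence of Corollary \ref{cor:cod1} rather than by Theorem \ref{thm:K-theory}; and finally I would push the resulting class on $X\times\Box^1$ back to $\Gr^p$ using Proposition \ref{prop:adams}.

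For the lifting, represent $\alpha$ by a class in $\Gr^pK_0^Y(X)$ and choose $\hat\alpha\in F^pK_0^Y(X)$ mapping to it. As $\epsilon(\alpha)=0$ in $\Gr^pK_0(X,D)$, the class $\epsilon(\hat\alpha)$ lies in $F^{p+1}K_0(X,D)$, which by Definition \ref{def:rel_coniveau} is the image of some $K_0^Z(X)$ with $Z\in\mcal{S}(X|D)$ of codimension $\ge p+1$; subtracting a lift $z$ I obtain $a:=\hat\alpha-z\in F^pK_0^{Y'}(X)$ with $\epsilon(a)=0$ and $[a]=\alpha$ in $\Gr^p$. Since $p\ge 2$, this $a$ lies in $F^2K_0^{Y'}(X)$ and is killed by $\epsilon$, so the exactness of the $F^2$-sequence in Corollary \ref{cor:cod1} furnishes $W\in\mcal{S}(X|D,1)$ and $b\in F^2K_0^W(X\times\Box^1)$ with $a=\iota_0^*b-\iota_1^*b$.

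To descend, I would pass to the truncated groups $A:=\colim_{Y}F^2K_0^Y(X)/F^{p+1}K_0^Y(X)$ and $B:=F^2K_0^W(X\times\Box^1)/F^{p+1}K_0^W(X\times\Box^1)$, each carrying the coniveau filtration concentrated in degrees $2,\dots,p$; by Lemma \ref{lem:coniveau} both satisfy the hypotheses of Proposition \ref{prop:adams}, while by Corollary \ref{cor:commutativity} the maps $\iota_0^*,\iota_1^*\colon B\to A$ respect the filtration and commute with $\psi^k$. In the top graded piece $\psi^k$ acts by $k^p$, so $\alpha\in F^pA=\Gr^pA$ automatically lies in the eigenspace $A^{(p)}$. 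Writing $\bar b\in B$ for the image of $b$, decompose by Proposition \ref{prop:adams}(ii)
\[
	c_3\,\bar b = \sum_{q=2}^{p} b_q, \qquad b_q\in B^{(q)},
\]
and set $y_q:=\iota_0^*b_q-\iota_1^*b_q\in A^{(q)}$. Applying $\iota_0^*-\iota_1^*$ to the displayed identity and using $\iota_0^*\bar b-\iota_1^*\bar b=\alpha$ gives $c_3\alpha=\sum_q y_q$; as $c_3\alpha\in A^{(p)}$, the element $c_3\alpha-y_p=\sum_{q\ne p}y_q$ lies in $A^{(p)}\cap\sum_{q\ne p}A^{(q)}$, which Proposition \ref{prop:adams}(ii) shows is killed by $c_3$, so $c_3y_p=c_3^2\alpha$. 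Finally I would put $\beta:=[\,c_1c_3\,b_p\,]\in\Gr^pK_0^W(X\times\Box^1)$, well defined since $c_1b_p\in F^pB$ by Proposition \ref{prop:adams}(i); then $\iota_0^*\beta-\iota_1^*\beta=c_1c_3\,y_p=c_1c_3^2\,\alpha=C\alpha$ in $\colim_Y\Gr^pK_0^Y(X)$.

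I expect the crux to be the second step: one must cross $\iota_0^*-\iota_1^*$ while remaining in codimension $\ge 2$, and it is precisely the codimension-one analysis behind Corollary \ref{cor:cod1} that makes this possible. Using the full Theorem \ref{thm:K-theory} instead would only place $b$ in $K_0^W(X\times\Box^1)$, whose degree-$1$ eigencomponent would survive under $\iota_0^*-\iota_1^*$ and force the extraneous weight $w_{p-1}$ into the bound, spoiling the range $[2,p]$. Granting Corollary \ref{cor:cod1}, the remainder is the eigenspace bookkeeping of the last step, entirely governed by Proposition \ref{prop:adams}.
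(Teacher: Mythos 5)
Your proposal is correct and follows essentially the same route as the paper: produce a class $\tilde\beta$ in $\colim_Y F^2K_0^Y(X\times\Box^1)/F^{p+1}$ via Corollary \ref{cor:cod1}, multiply by $\prod_{2\le i,j\le p}w_{|i-j|}$ to decompose into Adams eigencomponents by Proposition \ref{prop:adams}(ii), isolate the weight-$p$ piece, and push it into $\Gr^p$ with the factor $\prod_{i=2}^{p-1}w_{p-i}$ from Proposition \ref{prop:adams}(i). You merely make explicit two points the paper leaves implicit, namely the correction of the lift of $\alpha$ by a class supported in codimension $\ge p+1$ and the eigenspace bookkeeping showing $c_3 y_p = c_3^2\alpha$; both are carried out correctly.
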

\begin{proof}
We may assume that $D$ is non-empty.
The case $p\le 1$ is true by Theorem \ref{thm:K-theory} and Lemma \ref{cor:cod1}.
Let $p>1$.
We consider the diagram
\[
\xymatrix{
	\displaystyle\colim_{Y\in\mcal{S}(X|D,1)}\bigoplus_{i=2}^p(F^2K_0^Y(X\times\Box^1)/F^{p+1})^{(i)} \ar[d] \ar[r] 
		& \displaystyle\colim_{Y\in\mcal{S}(X|D)}\bigoplus_{i=2}^p(F^2K_0^Y(X)/F^{p+1})^{(i)} \ar[d] \\
	\displaystyle\colim_{Y\in\mcal{S}(X|D,1)}F^2K_0^Y(X\times\Box^1)/F^{p+1} \ar[r]
		& \displaystyle\colim_{Y\in\mcal{S}(X|D)}F^2K_0^Y(X)/F^{p+1} \\
	\displaystyle\colim_{Y\in\mcal{S}(X|D,1)}\Gr^pK_0^Y(X\times\Box^1) \ar[r] \ar@{^{(}->}[u]
		& \displaystyle\colim_{Y\in\mcal{S}(X|D)}\Gr^pK_0^Y(X) \ar@{^{(}->}[u]
}
\]
Suppose given $\alpha$ as in the statement.
According to Corollary \ref{cor:cod1}, there exists $\tilde{\beta}\in F^2K_0^W(X\times\Box^1)/F^{p+1}$ for some $W\in\mcal{S}(X|D,1)$ such that
\[
	\iota_0^*\beta-\iota_1^*\beta = \alpha  \quad \text{in} \quad \colim_{Y\in\mcal{S}(X|D)}F^2K_0^Y(X)/F^{p+1}.
\]

By Lemma \ref{prop:adams}, $(\prod_{2\le i,j\le p}w_{|i-j|})\tilde{\beta}$ lifts to 
\[
	\beta^{(2)}+\beta^{(3)}+\dotsb+\beta^{(p)} \in \bigoplus_{i=2}^p(F^2K_0^W(X\times\Box^1)/F^{p+1})^{(i)}.
\]
By Lemma \ref{prop:adams} again, we see that
\[
	\Bigl(\prod_{2\le i,j\le p}w_{|i-j|}\Bigr)(\iota_0^*\beta^{(p)}-\iota_1^*\beta^{(p)}) = \Bigl(\prod_{2\le i,j\le p}w_{|i-j|}\Bigr)^2\alpha.
\]
Since $(\prod_{i=2}^{p-1}w_{p-i})\beta^{(p)}$ is in $\Gr^pK_0^W(X\times\Box^1)$, we are done.
\end{proof}

\begin{proof}[Proof of Theorem \ref{mainthm}]
Let $X$ be a regular scheme, $D$ an effective Cartier divisor on $X$ and $p$ an integer.
Let us consider the commutative diagram
\[
\xymatrix{
	\displaystyle\colim_{Y\in\mcal{S}(X|D,1)}\Gr^pK_0^Y(X\times\Box^1) \ar[r]
		& \displaystyle\colim_{Y\in\mcal{S}(X|D)}\Gr^pK_0^Y(X) \ar[r] & \Gr^pK_0(X,D) \ar[r] & 0 \\
	\displaystyle\colim_{Y\in\mcal{S}(X|D,1)}\CH^p_Y(X\times\Box^1) \ar[r] \ar@{->>}[u]^{\msf{cyc}_1}
		& \displaystyle\colim_{Y\in\mcal{S}(X|D)}\CH^p_Y(X) \ar[r] \ar@{->>}[u]^{\msf{cyc}_0} & \CH^p(X|D) \ar@{.>>}[u]^{\msf{cyc}_{\mrm{rel}}} \ar[r] & 0.
}
\]
Since the bottom row is exact (Proposition \ref{prop:cycle}) and the composite of the first two morphisms in the upper row is zero (Theorem \ref{thm:K-theory}), a morphism $\msf{cyc}_{\mrm{rel}}$ is induced and it is surjective.

Suppose we are given $\alpha\in\CH^p(X|D)$ such that $\msf{cyc}_{\mrm{rel}}(\alpha)=0$.
By Lemma \ref{lem:refine} and by a simple diagram chase, there exists $\beta\in\ker(\msf{cyc}_0)$ which lifts
\[
	\Bigl(\prod_{i=2}^{p-1}w_{p-i}\Bigr)\Bigl(\prod_{2\le i,j\le p}w_{|i-j|}\Bigr)^2\alpha.
\]
By Theorem \ref{thm:GS}, $(\prod_{i=1}^{p-2}w_i)\beta=0$.

The last statement (comparison between $F^*$ and the gamma filtration) follows from Proposition \ref{prop:adams} and Theorem \ref{thm:cod1}.
\end{proof}

\section{Applications}

\subsection*{Multiplicative structure on Chow groups with modulus}

As an application of Theorem \ref{mainthm}, we prove that there is a natural multiplicative structure on $\CH^*(X|D)$ up to torsion.
We formulate it keeping track of the changes of $D$.
Note that the Chow groups with modulus yield a contravariant functor
\[
	\CH^*(X|-)\colon \msf{Div}^+(X)^{\mrm{op}} \to \msf{GrAb}
\] 
from the category of effective Cartier divisors on $X$ to the category of graded abelian groups.

\begin{notation}
Let $\mcal{A}$ be an additive category and $l$ a positive integer.
We define a category $\mcal{A}_{\mbb{Z}[1/l]}$ having the same objects as $\mcal{A}$ and $\Hom_{\mcal{A}[1/l]}(-,-)=\Hom_{\mcal{A}}(-,-)\otimes_{\mbb{Z}}\mbb{Z}[1/l]$.
For an object $M$ in $\mcal{A}$, we denote its image in $\mcal{A}_{\mbb{Z}[1/l]}$ by $M_{\mbb{Z}[1/l]}$.
\end{notation}

\begin{theorem}\label{thm:mult}
Let $X$ be a regular scheme of dimension $d$.
Let $\msf{Div}^+_{\mrm{aff}}(X)$ be the full subcategory of $\msf{Div}^+(X)$ consisting of divisors admitting affine open neighborhoods in $X$. 
Then 
\[
	\CH^*(X|-)_{\mbb{Z}[1/(d-1)!]} \in \msf{Fun}(\msf{Div}^+_{\mrm{aff}}(X)^{\mrm{op}},\msf{GrAb})_{\mbb{Z}[1/(d-1)!]}
\]
has a natural commutative monoid structure.
\end{theorem}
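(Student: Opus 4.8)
The plan is to produce the multiplication by transporting, via the comparison of Theorem~\ref{mainthm}, the natural ring structure that relative $K$-theory carries in the divisor direction. I view $\msf{Div}^+_{\mrm{aff}}(X)$ as a symmetric monoidal category under addition of divisors, with unit the zero divisor $0=\varnothing$, and equip $\msf{Fun}(\msf{Div}^+_{\mrm{aff}}(X)^{\mrm{op}},\msf{GrAb})$ with the induced Day convolution product. A commutative monoid structure then amounts to a family of natural, associative and commutative pairings $\CH^*(X|D_1)\otimes\CH^*(X|D_2)\to\CH^*(X|D_1+D_2)$ together with a unit in $\CH^0(X|0)=\CH^0(X)$. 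All of these I intend to obtain from $K$-theory and then carry over by the cycle class map after inverting $(d-1)!$.

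First I would build the analogous external product on relative $K$-theory. Writing $\mcal{I}_i$ for the ideal sheaf of $D_i$, the crucial point is that the $D_i$ are Cartier, so $\mcal{I}_{D_1+D_2}=\mcal{I}_1\mcal{I}_2$: a perfect complex trivialized modulo $\mcal{I}_1$, multiplied with one trivialized modulo $\mcal{I}_2$ using the $E_\infty$-ring structure on $K(X)$, is canonically trivialized modulo $\mcal{I}_1\mcal{I}_2=\mcal{I}_{D_1+D_2}$. This produces natural pairings $K(X,D_1)\otimes K(X,D_2)\to K(X,D_1+D_2)$ exhibiting $K(X,-)$ as a lax symmetric monoidal functor on $(\msf{Div}^+_{\mrm{aff}}(X),+)$, with unit $[\mcal{O}_X]\in K_0(X,0)=K_0(X)$; on $\pi_0$ this makes $K_0(X,-)$ a commutative monoid in the Day convolution functor category.

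Next I would pass to the graded pieces. The external product is compatible with the $\lambda$-operations of Lemma~\ref{lem:lambda_relK}, and hence with the $\gamma$-filtration in the multiplicative sense $F^p_\gamma K_0(X,D_1)\cdot F^q_\gamma K_0(X,D_2)\subseteq F^{p+q}_\gamma K_0(X,D_1+D_2)$, which is the external analogue of the standard multiplicativity of the $\gamma$-filtration \cite{AT69}; as a check, the Adams operations act by $k^p$ on the weight-$p$ piece and multiplicatively on the external product, so the pairing carries weight $p$ and weight $q$ to weight $p+q$. Consequently the product descends to natural pairings $\Gr^p_\gamma K_0(X,D_1)\otimes\Gr^q_\gamma K_0(X,D_2)\to\Gr^{p+q}_\gamma K_0(X,D_1+D_2)$, making $\Gr^*_\gamma K_0(X,-)$ a commutative monoid in the functor category integrally, before any denominators are inverted.

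Finally I would transport this structure along the cycle class map. By Theorem~\ref{mainthm}, for each $D\in\msf{Div}^+_{\mrm{aff}}(X)$ the map $\msf{cyc}\colon\CH^*(X|D)_{\mbb{Z}[1/(d-1)!]}\xrightarrow{\sim}\Gr^*_\gamma K_0(X,D)_{\mbb{Z}[1/(d-1)!]}$ is an isomorphism: in codimension $p$ the kernel is $(p-1)!$-power torsion with $(p-1)!\mid(d-1)!$ for $p\le d$, both sides vanish for $p>d$, and $F^*$ agrees with the $\gamma$-filtration up to $(d-1)!$-power torsion. Since $\msf{cyc}$ is assembled from the support cycle class maps $\CH^p_Y(X)\to K_0^Y(X)$ through the presentations of Proposition~\ref{prop:cycle} and Theorem~\ref{thm:K-theory}, which are functorial in $D$, it is a natural isomorphism, and transporting the monoid structure of the previous step along it equips $\CH^*(X|-)_{\mbb{Z}[1/(d-1)!]}$ with a natural commutative monoid structure, the unit being the fundamental class $[X]\in\CH^0(X)$. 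The principal difficulty lies in the coherent construction of the second paragraph: realizing $K(X,-)$ as a lax symmetric monoidal functor into spectra, with all of associativity, commutativity, unitality and naturality in both divisors, and with the $\lambda$-compatibility needed for the $\gamma$-filtration estimate; a secondary point is to confirm that $\msf{Div}^+_{\mrm{aff}}(X)$ is closed under addition, and if this fails one defines the pairings on all of $\msf{Div}^+(X)$, where the $K$-theoretic products always exist, invoking $\msf{cyc}$ only over the affine subcategory.
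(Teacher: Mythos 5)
Your proposal proves (or attempts to prove) a different, and considerably stronger, statement than the one the paper intends. The opening of the subsection (``a natural multiplicative structure on $\CH^*(X|D)$ up to torsion \dots\ formulated keeping track of the changes of $D$'') shows that the commutative monoid structure is meant with respect to the \emph{pointwise} tensor product on $\msf{Fun}(\msf{Div}^+_{\mrm{aff}}(X)^{\mrm{op}},\msf{GrAb})$: for each fixed $D$ one wants a graded ring structure on $\CH^*(X|D)_{\mbb{Z}[1/(d-1)!]}$, natural in $D$. The paper's proof is then one line: $K_0(X,D)$ is a (non-unital) special $\lambda$-ring (Lemma \ref{lem:lambda_relK}), its $\gamma$-filtration is multiplicative, so $\Gr^*_\gamma K_0(X,D)$ is naturally a graded ring, and Theorem \ref{mainthm} transports this to $\CH^*(X|D)$ after inverting $(d-1)!$ exactly as in your last paragraph. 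You instead interpret the monoid structure via Day convolution for addition of divisors, which is neither what is claimed nor needed.

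More importantly, the key step of your construction has a genuine gap. The pairing $K(X,D_1)\otimes K(X,D_2)\to K(X,D_1+D_2)$ does not exist by the argument you give. In the presentation of $K_0(X,D)$ by triples $(P,\alpha,Q)$, a class relative to $D_1$ multiplied with a class relative to $D_2$ is trivialized on $D_1$ (because the first factor is) and on $D_2$ (because the second is); the natural receptacle of the product is therefore the \emph{birelative} group, the iterated homotopy fiber of the square formed by $K(X)$, $K(D_1)$, $K(D_2)$ and $K(D_1\cap D_2)$. The comparison between this iterated fiber and $K(X,D_1+D_2)$ is induced by the map $K(D_1+D_2)\to K(D_1)\times^h_{K(D_1\cap D_2)}K(D_2)$ and hence goes in the wrong direction; inverting it is an excision problem that fails in general. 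In particular, $\mcal{I}_{D_1+D_2}=\mcal{I}_1\mcal{I}_2$ does not produce a canonical quasi-isomorphism between $P_1\otimes P_2$ and $Q_1\otimes Q_2$ on the thickened divisor $D_1+D_2$: already on $D_1\setminus D_2$ nothing relates the restrictions of $P_2$ and $Q_2$. You yourself flag this step as the principal difficulty, and it is exactly where the argument breaks. The remedy is to drop the Day convolution picture and use the internal product of the $\lambda$-ring $K_0(X,D)$, for which multiplicativity of the $\gamma$-filtration is classical \cite{AT69}; your final transport-of-structure paragraph then goes through essentially verbatim.
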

\begin{proof}
By Theorem \ref{mainthm}, it suffices to show that $\Gr^*_{\gamma}K_0(X,-)$ has a commutative monoid structure, but it is obvious from its definition.
\end{proof}

\subsection*{Chow groups with topological modulus}

Here, we show that if $D$ is $K_1$-regular then the Chow group with modulus $\CH^*(X|D)$ becomes much simpler (at least up to torsion).
Compare the following definition with Notation \ref{not:modulus} and Definition \ref{def:Chow_modulus}.

\begin{definition}
Let $X$ be a unicodimensional catenary scheme, $D$ an effective Cartier divisor and $p$ an integer.
\begin{enumerate}[(1)]
\item $\mcal{S}(X|D_{\mrm{top}}):=\mcal{S}(X|D)$ and $\mcal{Z}^p(X|D_{\mrm{top}}):=\mcal{Z}^p(X|D)$.
\item $\mcal{S}(X|D_{\mrm{top}},1)$ is the set of all closed subsets of $X\times\Box^1$ not meeting $D\times\Box^1$.
\item $\mcal{Z}^p(X|D_{\mrm{top}},1)$ is the free abelian group with generators $[W]$, one for each $w\in (X\times\Box^1)^{(p)}$ whose closure $W$ is dominant over $\Box^1$ and not meeting $D\times\Box^1$.
\item We define 
\[
	\CH^p(X|D_{\mrm{top}}) := \coker\bigl(\mcal{Z}^p(X|D_{\mrm{top}},1) \xrightarrow{\iota_0^*-\iota_1^*} \mcal{Z}^p(X|D_{\mrm{top}})\bigr).
\]
\end{enumerate}
\end{definition}

\begin{remark}
The groups $\CH^p(X|D_{\mrm{top}})$ and its higher variant have been studied in \cite{Mi17,IK} by the name of \textit{na\"ive Chow groups with modulus} and \textit{Chow groups with topological modulus} respectively.
\end{remark}

\begin{lemma}\label{lem:cycle_top}
Let $X$ be a unicodimensional catenary scheme, $D$ an effective Cartier divisor on $X$ and $p$ an integer.
Then the sequence 
\[
\xymatrix@1{
	\displaystyle\colim_{Y\in\mcal{S}(X|D_{\mrm{top}},1)}\mcal{Z}^p_Y(X\times\Box^1) \ar[r]^-{\iota_0^*-\iota_1^*} 
	& \displaystyle\colim_{Y\in\mcal{S}(X|D_{\mrm{top}})}\CH^p_Y(X) \ar[r]^-{\epsilon} & \CH^p(X|D_{\mrm{top}}) \ar[r] & 0
}
\]
is exact.
\end{lemma}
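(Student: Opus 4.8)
The plan is to mirror the proof of Proposition \ref{prop:cycle} almost verbatim, exploiting that the topological modulus condition ``not meeting $D\times\Box^1$'' behaves formally like the modulus condition of Definition \ref{def:modulus_cond} but is far easier to manipulate. In particular, the analogue of Lemma \ref{lem:containment} is immediate in this setting: any closed subset of a closed set that does not meet $D\times\Box^1$ again does not meet $D\times\Box^1$, so no normalization argument is required.

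First I would establish the topological analogue of Lemma \ref{lem:epsilon}, namely that for $Y\in\mcal{S}(X|D_{\mrm{top}})$ the canonical map $\mcal{Z}^p_Y(X)\to\CH^p(X|D_{\mrm{top}})$ factors through $\CH^p_Y(X)$. Given $w\in X^{(p-1)}\cap Y$ with closure $W$ and $f\in\kappa(w)^\times$, I reuse the divisor $E$ on $W\times\Box^1$ cut out by $f+(1-f)t$. Since $W\cap D=\varnothing$, the cycle $[E]$ is dominant over $\Box^1$ and has support inside $W\times\Box^1$, which misses $D\times\Box^1$; hence $[E]\in\mcal{Z}^p(X|D_{\mrm{top}},1)$, and the identity $\iota_0^*[E]-\iota_1^*[E]=\mrm{div}(f)$ shows that $\mrm{div}(f)=0$ in $\CH^p(X|D_{\mrm{top}})$. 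This makes $\epsilon$ well defined, and its surjectivity is then automatic because $\mcal{Z}^p(X|D_{\mrm{top}})\to\CH^p(X|D_{\mrm{top}})$ is already surjective by the construction of the cokernel.

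With $\epsilon$ in hand, exactly as in Proposition \ref{prop:cycle} it remains only to check that the composite $\epsilon\circ(\iota_0^*-\iota_1^*)$ vanishes. I would argue on generators $[V]$, where $v\in(X\times\Box^1)^{(p)}\cap Y$ for $Y\in\mcal{S}(X|D_{\mrm{top}},1)$. If $V$ is dominant over $\Box^1$, then $[V]\in\mcal{Z}^p(X|D_{\mrm{top}},1)$, since its support lies in $Y$ and hence misses $D\times\Box^1$; thus $\iota_0^*[V]-\iota_1^*[V]$ is one of the defining relations of $\CH^p(X|D_{\mrm{top}})$ and dies under $\epsilon$. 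If $V$ is not dominant it is supported in a single fiber $X\times\{q\}$: for $q\notin\{0,1\}$ both $\iota_0^*[V]$ and $\iota_1^*[V]$ vanish because the relevant fiber products are empty, while for $q\in\{0,1\}$ the pullback to the opposite point is empty and the pullback to $X\times\{q\}$ vanishes because $X\times\{q\}$ is a principal divisor in $X\times\Box^1$, so that its associated line bundle restricts trivially (the second case of Construction \ref{constr:pullback}).

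Finally, exactness at the middle term follows formally. The case analysis above shows that, after lifting to $\mcal{Z}^p(X|D_{\mrm{top}})$, the image of $\iota_0^*-\iota_1^*$ on the full colimit coincides with the relation subgroup defining $\CH^p(X|D_{\mrm{top}})$ (the dominant cycles produce exactly the defining relations and the non-dominant ones contribute nothing), while the topological version of Lemma \ref{lem:epsilon} guarantees that the $\mrm{div}$ relations already lie in that subgroup; together these identify $\coker(\iota_0^*-\iota_1^*)$ with $\CH^p(X|D_{\mrm{top}})$ via $\epsilon$. I do not anticipate a genuine obstacle here: the only points requiring care are the non-dominant fiber case and the verification that $[E]$ still satisfies the topological condition, both of which are strictly simpler than their counterparts in Proposition \ref{prop:cycle}.
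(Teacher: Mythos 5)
Your proposal is correct and follows essentially the same route as the paper, whose proof of Lemma \ref{lem:cycle_top} is literally ``Same as Proposition \ref{prop:cycle}'': you transplant Lemma \ref{lem:epsilon} and the generator-by-generator case analysis, correctly observing that the analogue of Lemma \ref{lem:containment} is trivial for the topological condition. No further comment is needed.
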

\begin{proof}
Same as Proposition \ref{prop:cycle}.
\end{proof}

The following is a variant of Theorem \ref{thm:K-theory}, which is a special case of \cite[Lemma xx]{IK}.

\begin{lemma}\label{lem:K-theory_top}
Let $X$ be a scheme and $D$ an effective Cartier divisor on $X$ admitting an affine open neighborhood in $X$.
Assume that $X$ is $K_0$-regular and that $D$ is $K_1$-regular.
Then the sequence
\[
\xymatrix@1{
	\displaystyle\colim_{Y\in\mcal{S}(X|D_{\mrm{top}},1)}K_0^Y(X\times\Box^1) \ar[r]^-{\iota_0^*-\iota_1^*} 
	& \displaystyle\colim_{Y\in\mcal{S}(X|D_{\mrm{top}})}K_0^Y(X) \ar[r]^-{\epsilon} & K_0(X,D) \ar[r] & 0
}
\]
is exact.
\end{lemma}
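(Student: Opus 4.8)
The plan is to run the same three-step argument as in the proof of Theorem \ref{thm:K-theory}. Since $\mcal{S}(X|D_{\mrm{top}}) = \mcal{S}(X|D)$, the middle and right terms are unchanged, so the surjectivity of $\epsilon$ is again Lemma \ref{lem:surjectivity}, which uses only that $D$ has an affine open neighborhood. What must be redone is the vanishing $\epsilon\circ(\iota_0^*-\iota_1^*) = 0$ (the analogue of Corollary \ref{cor:rigidity}) and the exactness at the middle term. For the former I would replace the delicate modulus computation of Lemma \ref{lem:rigidity} by a short homotopy-invariance argument exploiting the regularity hypotheses; for the latter I would reuse the diagram chase ending the proof of Theorem \ref{thm:K-theory} together with Lemma \ref{lem:K_1}.

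For the rigidity, first observe that $\Box^1$ is the affine line, so $X\times\Box^1 = X\times\mbb{A}^1$ and likewise for $D$. I claim $\iota_0^* = \iota_1^*$ on the relative group $K_0(X\times\Box^1, D\times\Box^1)$. The projection $q$ and either section $\iota_a$ satisfy $\iota_a^*q^* = \mrm{id}$, so each $K_n((-)\times\Box^1)$ splits as $q^*K_n(-)$ plus the reduced summand $R_n(-) := \ker(\iota_0^*)$, and these reduced summands inherit the long exact sequence of the pair. The relevant segment reads
\[
	R_1(D) \to R_0(X,D) \to R_0(X),
\]
where $R_0(X,D) = \ker\bigl(\iota_0^*\colon K_0(X\times\Box^1, D\times\Box^1)\to K_0(X,D)\bigr)$. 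Now $R_0(X) = 0$ because $X$ is $K_0$-regular and $R_1(D) = 0$ because $D$ is $K_1$-regular, whence $R_0(X,D) = 0$. Thus $\iota_0^*$ is an isomorphism onto $K_0(X,D)$ with inverse $q^*$, and since $\iota_1^*$ is another section of $q^*$ we get $\iota_0^* = \iota_1^*$. Finally, any $Y\in\mcal{S}(X|D_{\mrm{top}},1)$ avoids $D\times\Box^1$, so the forgetful map $K_0^Y(X\times\Box^1)\to K_0(X\times\Box^1)$ has vanishing restriction to $K_0(D\times\Box^1)$ and lifts canonically to $K_0(X\times\Box^1, D\times\Box^1)$; this lift is compatible with $\iota_0^*,\iota_1^*$ and with $\epsilon$, so $\epsilon\circ(\iota_0^*-\iota_1^*) = 0$.

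For the exactness at the middle I would proceed exactly as at the end of the proof of Theorem \ref{thm:K-theory}. Given $Z\in\mcal{S}(X|D)$ and $\alpha\in K_0^Z(X)$ dying in $K_0(X,D)$, the relative $K$-theory diagram furnishes a lift $\alpha'\in K_1(X\setminus Z)$ with $\alpha'|_D = 0$, and Lemma \ref{lem:K_1} produces $W$ and $\beta'\in K_1(X\times\Box^1\setminus W)$ with $\iota_0^*\beta'-\iota_1^*\beta' = \alpha'$; the boundary of $\beta'$ is then the required preimage of $\alpha$. The only additional remark is that the set $W = V(\det\alpha(t))$ built in Lemma \ref{lem:K_1} already lies in $\mcal{S}(X|D_{\mrm{top}},1)$: the matrix $\alpha(t)$ reduces modulo the ideal $I$ of $D$ to the elementary matrix $\alpha'|_D$, so $\det\alpha(t)\equiv 1\pmod I$ and hence $\det\alpha(t)$ is nowhere zero on $D\times\Box^1$, i.e.\ $W$ does not meet $D\times\Box^1$.

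I expect the rigidity step to be where the real work lies, and it is precisely there that the asymmetric hypotheses enter: $K_0$-regularity of $X$ kills $R_0(X)$ while $K_1$-regularity of $D$ kills $R_1(D)$, and these are exactly the two groups flanking $R_0(X,D)$ in the reduced sequence. The points that still need care, though routine, are the exactness of the reduced long exact sequence (which follows from the splitting by $q^*$) and the naturality of the canonical lift $K_0^Y(X\times\Box^1)\to K_0(X\times\Box^1, D\times\Box^1)$ against the restrictions $\iota_a^*$ and the map $\epsilon$.
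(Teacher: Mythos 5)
Your proposal is correct and follows essentially the same route as the paper: the paper's proof consists precisely of invoking Lemma \ref{lem:surjectivity} for surjectivity, Lemma \ref{lem:K_1} for middle exactness, and the isomorphism $K_0(X,D)\simeq K_0(X\times\Box^1,D\times\Box^1)$ (which your reduced-summand argument with $R_1(D)=0$ and $R_0(X)=0$ verifies) for the vanishing of $\epsilon\circ(\iota_0^*-\iota_1^*)$. Your additional observations --- the canonical lift of $K_0^Y(X\times\Box^1)$ to the relative group for $Y$ avoiding $D\times\Box^1$, and the fact that $\det\alpha(t)\equiv 1\pmod I$ places $W$ in $\mcal{S}(X|D_{\mrm{top}},1)$ --- are exactly the details the paper leaves implicit.
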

\begin{proof}
By the assumption, $K_0(X,D)\simeq K_0(X\times\Box^1,D\times\Box^1)$, from which it follows that the composite $\epsilon\circ(\iota_0^*-\iota_1^*)$ is zero.
The surjectivity of $\epsilon$ follows from Lemma \ref{lem:surjectivity}, and the exactness at the middle term follows from Lemma \ref{lem:K_1}.
\end{proof}

\begin{proposition}\label{prop:top}
Let $X$ be a regular scheme and $D$ an effective Cartier divisor on $X$.
Assume that $D$ is $K_1$-regular and admits an affine open neighborhood in $X$.
Then, for each integer $p$, there exists a surjective group morphism
\[
	\CH^p(X|D_{\mrm{top}}) \twoheadrightarrow F^pK_0(X,D)/F^{p+1}K_0(X,D)
\]
such that its kernel is $(p-1)!^N$-torsion for some positive integer $N$ depending only on $p$.
\end{proposition}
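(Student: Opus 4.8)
The plan is to rerun the proof of Theorem \ref{mainthm} essentially verbatim, substituting the two foundational presentations by their topological counterparts. Every occurrence of the index category $\mcal{S}(X|D,1)$ is replaced by $\mcal{S}(X|D_{\mrm{top}},1)$, the Chow presentation (Proposition \ref{prop:cycle}) is replaced by Lemma \ref{lem:cycle_top}, and the $K$-theory presentation (Theorem \ref{thm:K-theory}) is replaced by Lemma \ref{lem:K-theory_top}. The hypotheses of Lemma \ref{lem:K-theory_top} are met here: $X$ is regular, hence $K_0$-regular, $D$ is $K_1$-regular by assumption, and both admit the required affine open neighborhood. Since $\mcal{S}(X|D_{\mrm{top}},1)$ consists of closed subsets of $X\times\Box^1$ simply avoiding $D\times\Box^1$, the restriction maps $\iota_0^*,\iota_1^*$ present no difficulty, the supports involved staying away from $D$ throughout.

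Before the main diagram chase I would first record the codimension-one input, namely the topological analog of Corollary \ref{cor:cod1}: the exactness of
\[
\colim_{Y\in\mcal{S}(X|D_{\mrm{top}},1)}F^2K_0^Y(X\times\Box^1) \xrightarrow{\iota_0^*-\iota_1^*} \colim_{Y\in\mcal{S}(X|D_{\mrm{top}})}F^2K_0^Y(X) \xrightarrow{\epsilon} F^2K_0(X,D) \to 0
\]
and of the corresponding $\Gr^1$-sequence. This follows from Lemma \ref{lem:K-theory_top}, the natural splitting $K_0^Y(X)\simeq \CH^1_Y(X)\oplus F^2K_0^Y(X)$ of Lemma \ref{lem:cod1}, and Corollary \ref{cor:commutativity} (that the transition maps respect the coniveau filtration), exactly as Corollary \ref{cor:cod1} was deduced from Theorem \ref{thm:K-theory}. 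With this in hand, the refinement statement of Lemma \ref{lem:refine} goes through unchanged, since its proof invokes only the $F^2/F^{p+1}$-sequence just established together with the purely formal Adams decomposition of Proposition \ref{prop:adams}. It thus produces, for any $\alpha$ in the kernel of the relative cycle map, a class $\beta\in\Gr^pK_0^W(X\times\Box^1)$ with $W\in\mcal{S}(X|D_{\mrm{top}},1)$ lifting $(\prod_{i=2}^{p-1}w_{p-i})(\prod_{2\le i,j\le p}w_{|i-j|})^2\alpha$.

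The final step assembles the commutative square of the two presentations (Lemma \ref{lem:cycle_top} on the bottom, the $\Gr^p$-level of the $K$-theory sequence on top, linked by $\msf{cyc}$), whose induced map $\msf{cyc}_{\mrm{rel}}^{\mrm{top}}\colon \CH^p(X|D_{\mrm{top}})\twoheadrightarrow \Gr^pK_0(X,D)$ is surjective because $\msf{cyc}_0$ is (Theorem \ref{thm:GS}) and the top composite vanishes (Lemma \ref{lem:K-theory_top}, after passing to gradeds via Corollary \ref{cor:commutativity}). Feeding the lift $\beta$ into Theorem \ref{thm:GS}, which bounds $\ker(\msf{cyc}_0)$ by $(\prod_{i=1}^{p-2}w_i)$-torsion, gives $(\prod_{i=1}^{p-2}w_i)\beta=0$, so applying $\epsilon$ shows that $\alpha$ is annihilated by $(\prod_{i=2}^{p-1}w_{p-i})(\prod_{2\le i,j\le p}w_{|i-j|})^2(\prod_{i=1}^{p-2}w_i)$. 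Every $w_j$ occurring has index $j\le p-2$, and any prime $\ell\mid w_j$ satisfies $(\ell-1)\mid j\le p-2$, whence $\ell\le p-1$ and $\ell\mid(p-1)!$; thus this exponent divides a power of $(p-1)!$, yielding the claimed $(p-1)!^N$-torsion bound with $N$ depending only on $p$. The only point demanding genuine care rather than transcription is verifying that the codimension-one machinery (Lemma \ref{lem:cod1} and the derived Corollary \ref{cor:cod1}) nowhere used the modulus condition beyond mere disjointness from $D$; since that machinery rests on the generic splitting of $K_0^Y$ and on the $K$-theoretic homotopy invariance now supplied through $K_1$-regularity by Lemma \ref{lem:K-theory_top}, the substitution is legitimate.
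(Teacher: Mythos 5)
Your proposal is correct and follows exactly the route the paper takes: its proof of Proposition \ref{prop:top} is literally ``this follows from Lemma \ref{lem:cycle_top} and Lemma \ref{lem:K-theory_top} as in \S\ref{S:proof}'', i.e.\ rerun the argument for Theorem \ref{mainthm} with the two topological presentations substituted. Your write-up simply makes explicit the details (verification of the hypotheses of Lemma \ref{lem:K-theory_top}, the topological analog of Corollary \ref{cor:cod1}, and the $(p-1)!$-divisibility of the product of the $w_j$) that the paper leaves implicit.
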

\begin{proof}
This follows from Lemma \ref{lem:cycle_top} and Lemma \ref{lem:K-theory_top} as in \S\ref{S:proof}.
\end{proof}

\begin{theorem}
Let $X$ be a regular scheme and $D$ an effective Cartier divisor on $X$.
Assume that $D$ is $K_1$-regular and admits an affine open neighborhood in $X$.
Then, for each integer $p$, the canonical morphism
\[
	\CH^p(X|D) \to \CH^p(X|D_{\mrm{top}})
\]
is a $(p-1)!^N$-isomorphism for some positive integer $N$ depending only in $p$.
\end{theorem}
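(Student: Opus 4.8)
The plan is to deduce the statement from Theorem \ref{mainthm} and Proposition \ref{prop:top}, which together realize both Chow groups as approximations to the very same graded piece $F^pK_0(X,D)/F^{p+1}K_0(X,D)$. The whole proof will be a formal consequence of one geometric observation plus a diagram chase.

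First I would construct the canonical morphism and note that it is surjective. The point is that the modulus condition forces avoidance of $D\times\Box^1$: if $W$ satisfies the modulus condition along $D$, then in the inequality $\phi_W^*(D\times\Boxbar^1)\le\phi_W^*(X\times\{\infty\})$ of Definition \ref{def:modulus_cond} the right-hand divisor is supported over $\infty$, so effectivity of the difference forces $\phi_W^*(D\times\Boxbar^1)$ to be supported over $\infty$ as well. Pushing forward along the surjection $\phi_W$ then gives $\overline{W}\cap(D\times\Boxbar^1)\subseteq X\times\{\infty\}$, so that $W=\overline{W}\cap(X\times\Box^1)$ does not meet $D\times\Box^1$. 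Consequently $\mcal{S}(X|D,1)\subseteq\mcal{S}(X|D_{\mrm{top}},1)$, while $\mcal{Z}^p(X|D)=\mcal{Z}^p(X|D_{\mrm{top}})$ and $\mcal{S}(X|D)=\mcal{S}(X|D_{\mrm{top}})$ by definition. Comparing the cokernel presentations of Proposition \ref{prop:cycle} and Lemma \ref{lem:cycle_top}, which share the common middle term $\colim_{Y\in\mcal{S}(X|D)}\CH^p_Y(X)$, the modulus relations are contained in the topological ones, so the identity on that common term descends to a surjection $\phi\colon\CH^p(X|D)\twoheadrightarrow\CH^p(X|D_{\mrm{top}})$, which is the canonical morphism in question.

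Next I would assemble the commutative triangle over $F^pK_0(X,D)/F^{p+1}K_0(X,D)$. Both the map $\msf{cyc}_{\mrm{rel}}$ produced in the proof of Theorem \ref{mainthm} and its topological analogue $\msf{cyc}^{\mrm{top}}_{\mrm{rel}}$ from Proposition \ref{prop:top} are induced by one and the same cycle class map $\colim_{Y\in\mcal{S}(X|D)}\CH^p_Y(X)\to F^pK_0(X,D)/F^{p+1}K_0(X,D)$, differing only in the quotient through which they factor; since $\phi$ is induced by the identity on that common term, the relation $\msf{cyc}_{\mrm{rel}}=\msf{cyc}^{\mrm{top}}_{\mrm{rel}}\circ\phi$ holds. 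By Theorem \ref{mainthm} and Proposition \ref{prop:top} respectively, both legs $\msf{cyc}_{\mrm{rel}}$ and $\msf{cyc}^{\mrm{top}}_{\mrm{rel}}$ are surjective with kernels killed by $(p-1)!^N$ for some $N$ depending only on $p$, i.e.\ each is a $(p-1)!^N$-isomorphism.

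Finally I would conclude by an elementary chase. Any $x\in\ker\phi$ satisfies $\msf{cyc}_{\mrm{rel}}(x)=\msf{cyc}^{\mrm{top}}_{\mrm{rel}}(\phi(x))=0$, so $\ker\phi\subseteq\ker\msf{cyc}_{\mrm{rel}}$ is $(p-1)!^N$-torsion; together with the surjectivity of $\phi$ from the first step this shows $\phi$ is a $(p-1)!^N$-isomorphism, with $N$ depending only on $p$. (Without invoking surjectivity of $\phi$ one could instead apply the two-out-of-three property for bounded-torsion isomorphisms to the commutative triangle, reaching the same conclusion with a slightly larger exponent.) The only genuinely geometric input is the containment $\mcal{S}(X|D,1)\subseteq\mcal{S}(X|D_{\mrm{top}},1)$; everything else is formal. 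I expect that first step to be the main point requiring care, precisely because $\phi_W$ is the normalization map and one must pass carefully between the divisor inequality on $\overline{W}^N$ and the set-theoretic statement on $\overline{W}$ in $X\times\Boxbar^1$.
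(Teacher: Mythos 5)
Your proposal is correct and follows exactly the route the paper intends: its proof of this theorem is the one-line deduction from Theorem \ref{mainthm} and Proposition \ref{prop:top}, and your argument simply spells out the details (the containment $\mcal{S}(X|D,1)\subseteq\mcal{S}(X|D_{\mrm{top}},1)$ giving the surjection $\phi$, the compatibility of the two cycle class maps through the common middle term, and the resulting kernel bound). The one step you rightly flag as needing care --- passing from the divisor inequality on $\overline{W}^N$ to $W\cap(D\times\Box^1)=\varnothing$ via surjectivity of the normalization --- is handled correctly.
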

\begin{proof}
This follows from Theorem \ref{mainthm} and Proposition \ref{prop:top}.
\end{proof}

\end{document}